\documentclass[11pt,reqno]{amsart}
\usepackage[T1]{fontenc}
\usepackage{lmodern}
\usepackage{amssymb}
\usepackage{microtype}
\usepackage{needspace}
\usepackage[hidelinks]{hyperref}
\newtheorem{theorem}{Theorem}[section]
\newtheorem{lemma}[theorem]{Lemma}
\newtheorem{proposition}[theorem]{Proposition}
\newtheorem{corollary}[theorem]{Corollary}
\theoremstyle{definition}

\newtheorem{example}[theorem]{Example}
\theoremstyle{remark}
\newtheorem{remark}[theorem]{Remark}
\numberwithin{equation}{section}
\DeclareMathOperator{\Isom}{Isom}
\DeclareMathOperator{\Sym}{Sym}
\DeclareMathOperator{\Aut}{Aut}
\DeclareMathOperator{\im}{im}

\DeclareMathOperator{\Dih}{Dih}

\newcommand{\Q}{\mathbb Q}
\newcommand{\Z}{\mathbb Z}
\newcommand{\T}{T}
\newcommand{\eps}{\epsilon}
\DeclareMathOperator{\supp}{supp}
\newcommand{\F}{\mathbb F}

\newcommand{\id}{\mathrm{id}}
\newcommand{\Cyc}[1]{\mathrm C_{#1}}
\newcommand{\Ctwo}{\Cyc{2}}
\DeclareMathOperator{\rk}{rk}
\DeclareMathOperator{\prof}{prof}
\DeclareMathOperator{\sz}{sz}
\DeclareMathOperator{\refl}{refl}
\DeclareMathOperator{\Affpm}{Aff_{\pm}}

\title[Finite-valued metrics and natural groups]{Finite-valued invariant metrics and a classification of natural groups}
\author{Alex J Sutherland}
\address{Oregon State University, Corvallis, Oregon, USA}
\email{suthalex@oregonstate.edu}
\thanks{ORCID: \href{https://orcid.org/0000-0002-1364-5663}{0000-0002-1364-5663}.}
\date{October 1, 2026}
\subjclass[2020]{Primary 20B05; Secondary 05C25, 20B07, 20K30, 54E35}
\keywords{Invariant metric, natural group, inverse-pair coloring, regular permutation group, generalized dicyclic group, additive group of a field}
\hypersetup{
 pdftitle={Finite-valued invariant metrics and a classification of natural groups},
 pdfauthor={Alex J Sutherland},
 pdfsubject={Finite-valued invariant metrics, abelian coordinate reconstruction, and the classification of natural groups},
 pdfkeywords={invariant metric, natural group, inverse-pair coloring, generalized dicyclic group},
 pdfcreator={LaTeX with hyperref}}

\begin{document}
\raggedbottom
\begin{abstract}
For every group $G$, of arbitrary cardinality, we construct a right-invariant
metric with at most $32$ values whose isometries are exactly the permutations
preserving every right-invariant metric on $G$. The proof combines
subgroup-entry ranks (Section~\ref{sec:ranks}) and sign-variation conflict labels with a short-word rigidity
theorem of Leemann and de la Salle. Their nonabelian orientation-rigidity
theorem and direct regular-subgroup arguments yield the complete
classification of natural groups in the right-translation sense: an abelian
group $A$ is natural if and only if $2A=A$ or $2A=\{0\}$, and a nonabelian
group is natural if and only if it is not generalized dicyclic. In particular,
the additive group of every field is natural. The bound improves to $17$
for abelian groups and $5$ for Boolean groups, and the Boolean bound is
sharp: $\Cyc{2}^3$ admits no such metric with fewer than five values.
Complementary constructions
give one countable-valued hull metric realizing precisely the affine sign
isometries simultaneously on all subgroups containing fixed coordinate
markers, and signed-basis metrics with at most $p+5$ values over $\F_p$ for
odd $p$, and six values in characteristic two.
\end{abstract}
\maketitle
\enlargethispage{2pt}

\section{Introduction}

Throughout, groups are abstract and we work in ZFC. An isometry is a
surjective distance-preserving map; no topology is prescribed. A metric
$d$ on $G$ is \emph{right-invariant} if
\[
 d(xg,yg)=d(x,y)\qquad(x,y,g\in G).
\]
Writing $L(g)=d(g,e)$ gives $d(x,y)=d(xy^{-1},e)=L(xy^{-1})$;
symmetry and right invariance give $L(g^{-1})=L(g)$. Left invariance
and invariance under conjugation are not required.
For $g\in G$, put
\[
 [g]=\{g,g^{-1}\},\qquad R_g(x)=xg,\qquad R(G)=\{R_g:g\in G\}.
\]
The \emph{inverse pair} $[g]$ may be a singleton. The canonical color of
an unordered pair $\{x,y\}$ of distinct elements is $[xy^{-1}]$. Define
\begin{equation}\label{eq:Xi}
 \Xi(G)=\{F\in\Sym(G):[F(x)F(y)^{-1}]=[xy^{-1}]
                         \text{ for all }x,y\in G\}.
\end{equation}
Thus $R(G)\leq\Xi(G)$, and every element of $\Xi(G)$ preserves every
right-invariant metric. Our main result realizes precisely these
unavoidable symmetries using a fixed finite distance set. A metric is
\emph{uniformly discrete} if its nonzero distances have a positive lower
bound.

\begin{theorem}[Universal finite-valued realization]\label{thm:finite}
For every group $G$, there is a right-invariant metric $d$ such that
\[
 \Isom(G,d)=\Xi(G),\qquad
 \im(d)\subseteq\{0\}\cup
 \left\{1+\frac{j}{62}:1\leq j\leq31\right\}.
\]
In particular, $d$ takes at most $32$ values, including zero. It is bounded,
uniformly discrete, and complete.
\end{theorem}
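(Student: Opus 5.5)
The plan is to build $d$ from a length function $L\colon G\to\{0\}\cup\{1+j/62\}$, with $L(g)=0$ iff $g=e$ and $L(g)=L(g^{-1})$, and set $d(x,y)=L(xy^{-1})$. Since all nonzero values lie in $[1+1/62,\,3/2]$, the triangle inequality holds automatically: any two nonzero values sum to at least $2>3/2$. Right invariance, boundedness, uniform discreteness and completeness are then immediate, because Cauchy sequences are eventually constant. The inclusion $\Xi(G)\subseteq\Isom(G,d)$ is also immediate, since $L$ is constant on inverse pairs. All the work is therefore in the reverse inclusion. We must choose a coloring $c$ of inverse pairs with at most $31$ colors such that every bijection $F$ preserving the edge coloring $\{x,y\}\mapsto c([xy^{-1}])$ in fact preserves the canonical colors $[xy^{-1}]$.

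For the coloring I would use a transfinite well-ordering together with subgroup-entry ranks (Section~\ref{sec:ranks}). Well-order $G$ and build a continuous increasing chain of subgroups $G_\alpha$, each generated from the previous one by a single new element $g_\alpha$. The color of $[g]$ records two pieces of information: a bounded-size encoding of how $[g]$ enters the chain (whether $g$ is a new generator, lies in $G_\alpha g_\alpha^{\pm1}$ relative to the previous stage, or equals a short word in the new generator), and a sign-variation conflict label that distinguishes $g$ from $g^{-1}$ whenever the short-word structure would otherwise allow an orientation swap. I would reserve a few colors as markers for generators, a few for words of length at most $2$ or $3$ in the marker plus the previous subgroup, and the remainder for the conflict labels, and then check that the total is at most $31$.

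For rigidity, take $F\in\Isom(G,d)$ and compose with $R_{F(e)^{-1}}$, so that we may assume $F(e)=e$. I would then prove by transfinite induction on $\alpha$ that $F$ maps $G_\alpha$ to itself and that $F(x)\in\{x\}$ or $F$ acts on $G_\alpha$ as an element of $\Xi(G_\alpha)$. At limit stages this follows by continuity. At successor stages, the marker colors force $F(g_\alpha)\in[g_\alpha]$ modulo $G_\alpha$, and the distances from $F(x)$ to the already-controlled elements pin down $F$ on the coset words $G_\alpha g_\alpha^{\pm1}$. This is where I would invoke the short-word rigidity theorem of Leemann and de la Salle. Their theorem states that a permutation preserving the colors of all pairs whose quotient is a short word in a generating set preserves all canonical colors, up to a global inversion phenomenon. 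The nonabelian orientation-rigidity theorem identifies exactly when such an inversion is possible, and the conflict labels are designed to rule out every inversion not already present in $\Xi$.

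I expect the main obstacle to be the compatibility of local orientations. The successor step gives, for each new coset, that $F$ acts either as the identity-like map or as an inversion-like sign-variation, and these local choices must be glued consistently into a single element of $\Xi(G)$. This matters especially in abelian pieces with elements of order $4$ and in generalized-dicyclic-like subgroups. The sign-variation conflict labels are the tool for this: I would first try to show that any inconsistent choice of signs produces a pair whose conflict label is changed by $F$. Keeping the number of labels bounded independently of $|G|$ is the delicate counting point that makes $31$ nonzero colors suffice.
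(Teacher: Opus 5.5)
Your first paragraph is fine: nonzero values in $[1+\tfrac1{62},\tfrac32]$ give the triangle inequality, and $\Xi(G)\le\Isom(G,d)$ is automatic. So everything reduces to an inverse-invariant coloring with at most $31$ colors whose color-preserving permutations lie in $\Xi(G)$, and finishing with the Leemann--de la Salle theorem (Theorem~\ref{thm:short}) matches the paper. The gap is that you never specify such a coloring. Nor do you show that an isometry $F$ satisfies that theorem's hypothesis, $F(ux)\in[u]F(x)$ for all $u\in S^{\le3}$ with $S$ a symmetric generating set.

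The first missing idea is how a \emph{fixed finite} palette can identify each generator inverse pair individually. At large cardinality most generators share a color, so ``marker colors force $F(g_\alpha)\in[g_\alpha]$ modulo $G_\alpha$'' has no support. Nothing you describe prevents $F$ from sending $g_\alpha$ to a same-colored $g_\beta$ with $\beta>\alpha$, and no color certifies your inductive hypothesis $F(G_\alpha)=G_\alpha$. Your successor step also treats only the cosets $G_\alpha g_\alpha^{\pm1}$, whereas $G_{\alpha+1}$ contains arbitrary alternating products of elements of $G_\alpha$ and powers of $g_\alpha$.

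The paper's device is global. It records the parity $\epsilon(\rk(g))$ of the entry rank on $\mathcal S\cup\mathcal S^2$. By Lemma~\ref{lem:rank}, a difference of generators from pairs $\mathcal B_\alpha\ne\mathcal B_\beta$ carries $\epsilon(\max\{\alpha,\beta\})$. Lemma~\ref{lem:ordinal} then shows that a permutation of a well-order preserving these vertex and pair labels is an order automorphism, hence the identity; same-label $\alpha<\beta$ are separated by the opposite-label witness $\alpha+1$. This gives Lemma~\ref{lem:pairs}, and hence $F(sx)\in[s]F(x)$ for $s\in\mathcal S$.

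The second missing idea is the passage from generators to short words. Knowing only $F(sx)\in[s]F(x)$, each letter of a length-$n$ word may be reversed independently. So $u=F(g_Px)F(x)^{-1}$ is only known to lie in the sign-variation set $\mathcal V(P)$, which has at most $2^n$ elements.

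The conflict label's job is to separate $P$ from the \emph{other} length-$n$ inverse pairs meeting $\mathcal V(P)$. It cannot ``distinguish $g$ from $g^{-1}$'': your own $L$ is constant on inverse pairs, and no such distinction is needed. The paper orients the conflict graph $\mathcal D_n$ with out-degree at most $2^n-1$. It then applies de Bruijn--Erd\H{o}s with compactness (Lemma~\ref{lem:coloring}) to label it properly with $2^{n+1}-1$ labels, i.e.\ $7$ and $15$ for $n=2,3$. Matching length tags and conflict labels then force $[u]=P$. This is exactly where the count $2+2\cdot7+15=31$ comes from, so your bound is unverified without it.

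Finally, the ``gluing of local orientations'' you single out as the main obstacle is precisely what the Leemann--de la Salle theorem supplies once short-word inverse pairs are preserved; conflict labels play no role there. Your plan assigns the labels to the wrong task and leaves both genuinely hard steps, and the bound $31$, unproved.
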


For an inverse-invariant map $C:G\setminus\{e\}\to\mathcal C$, where
$\mathcal C$ is an arbitrary set of colors, the \emph{occurring colors}
are $C(G\setminus\{e\})$. A permutation $f$ is
\emph{color-preserving} if
\[
 C(f(x)f(y)^{-1})=C(xy^{-1})\qquad(x\ne y).
\]

Equivalently, the canonical inverse-pair coloring admits a finite
coarsening with the same color-preserving automorphism group. The palette
is independent of cardinality, the number of generators, and the relations
of $G$.

\subsection*{Naturality}
A right-invariant metric $d$ on $G$ is \emph{naturalizing} if every group
law on the underlying set whose right translations are isometries of $d$
is abstractly isomorphic to $G$. A group is \emph{natural} if it admits
such a metric. Competing laws need not be abelian and may have a different
identity. An action is \emph{regular} if it is transitive and all point
stabilizers are trivial. Compatible group laws correspond to regular
subgroups of the isometry group, as detailed in
Section~\ref{sec:natural}.

We use only the right-translation formulation of
Knill~\cite{KnillNatural}. The original formulation in
\cite{KnillGraphs} required all translations and inversion to be
isometries. No classification under that earlier convention is asserted
here.

A group is \emph{Boolean} if every element has order at most two. A
\emph{generalized dicyclic group} is a nonabelian group
$G=\langle A,t\rangle$ with $A$ abelian of index two and
\begin{equation}\label{eq:dic}
 t^2=z\in A,\qquad z\ne e,\qquad z^2=e,\qquad
 tat^{-1}=a^{-1}\quad(a\in A).
\end{equation}
No finiteness is assumed. In particular, $t$ has order exactly four,
and $A$ is not Boolean: otherwise $t$ centralizes $A$ and $G$ is
abelian. This excludes split generalized dihedral groups
$A\rtimes_{-1}\Ctwo$.\footnote{We use order exactly four, as in
\cite[Introduction]{LeemannSalleFew}; see also the proof of
\cite[Theorem~8]{LeemannSalle}.}
The \emph{quaternion--Boolean family} consists of $Q_8\times B_0$ with
$B_0$ Boolean.

\begin{theorem}[Classification of natural groups]\label{thm:classification}
Let $G$ be any group.
\begin{enumerate}
\item If $G$ is abelian, written additively, then $G$ is natural if and only if
\[
 2G=G\quad\text{or}\quad 2G=\{0\}.
\]
\item If $G$ is nonabelian, then $G$ is natural if and only if it is not
 generalized dicyclic.
\end{enumerate}
Every group satisfying the corresponding condition has a naturalizing metric
taking at most $32$ values.
\end{theorem}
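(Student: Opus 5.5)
The plan is to reduce naturality to a statement about $\Xi(G)$ and then invoke Theorem~\ref{thm:finite}. Call a subgroup $H\leq\Sym(G)$ \emph{regular-faithful} if every regular subgroup of $H$ is isomorphic to $G$. Group laws on the underlying set of $G$ whose right translations are isometries of $d$ correspond to regular subgroups of $\Isom(G,d)$. Hence $d$ is naturalizing if and only if $\Isom(G,d)$ is regular-faithful. Every right-invariant metric satisfies $\Isom(G,d)\supseteq\Xi(G)$, so if $\Xi(G)$ contains a regular subgroup not isomorphic to $G$, then $G$ is not natural. Conversely, if every regular subgroup of $\Xi(G)$ is isomorphic to $G$, the metric of Theorem~\ref{thm:finite} is naturalizing and takes at most $32$ values. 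So the classification, including the final sentence, reduces to determining exactly when $\Xi(G)$ has a regular subgroup not isomorphic to $G$.

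\emph{Abelian case.} For abelian $G$, the inversion map $\iota(x)=-x$ preserves colors, since $[(-x)-(-y)]=[x-y]$. So $\Xi(G)\supseteq\Affpm(G)=\{x\mapsto\pm x+g\}$. I would first use the short-word rigidity theorem of Leemann and de la Salle to show $\Xi(G)=\Affpm(G)$ when $G$ is abelian (possibly with a separate treatment of small exceptions such as Boolean groups, where $\iota=\id$ and $\Xi(G)=R(G)$ directly). Then classify regular subgroups $K\leq\Affpm(G)$. If $K\not\leq R(G)$, then $K\cap R(G)=\{x\mapsto x+a: a\in A\}$ has index two. Pick $k=(x\mapsto -x+c)\in K$. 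Then $k^2=\id$, and $k$ has no fixed point by regularity, so $c\notin 2G$. Transitivity forces $A\cup(c-A)=G$, and one checks $A\supseteq 2G$. Such $K$ is generalized dihedral $A\rtimes_{-1}\Ctwo$, so it is nonabelian unless $A$ is Boolean. If $2G\neq G$ and $2G\neq0$, choose $c\notin 2G$ with $A=\{x: x\text{ in an index-two subgroup containing }2G\}$. This gives a regular subgroup that is nonabelian, hence not isomorphic to $G$, when $A$ is not Boolean. If $A$ is Boolean, then $2G\subseteq A$ forces $4G=0$, and one checks the resulting abelian group differs from $G$ or picks another $A$. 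This case analysis needs care. Conversely, if $2G=G$ there is no index-two subgroup, and if $2G=0$ then $\iota=\id$. In both cases only $R(G)$ survives.

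\emph{Nonabelian case.} Here I would use the Leemann--de la Salle nonabelian orientation-rigidity theorem: $\Xi(G)=R(G)$ unless $G$ is generalized dicyclic. When $\Xi(G)=R(G)$, the unique regular subgroup is $R(G)\cong G$, so $G$ is natural. For generalized dicyclic $G=\langle A,t\rangle$, the extra map $F$, equal to the identity on $A$ and to $x\mapsto x^{-1}$ on $At$, lies in $\Xi(G)$ (this is the standard computation). Then $\langle R(A),F R_t\rangle$ or a similar explicit combination is a regular subgroup isomorphic to $A\rtimes\Ctwo$ or to an abelian group, hence not isomorphic to $G$. I would verify regularity and non-isomorphism directly; the quaternion--Boolean family may need its own explicit regular subgroup.

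The main obstacle is the abelian direction. Pinning down $\Xi(G)=\Affpm(G)$ for all abelian $G$ and producing non-isomorphic regular subgroups when $0\neq2G\neq G$ is delicate, especially when $4G=0$ and the generalized dihedral candidate might be abelian. In that subcase I would try choosing $A$ so that $A$ contains an element of order $4$.
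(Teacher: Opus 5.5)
Your strategy is the paper's: the regular-subgroup criterion (Lemma~\ref{lem:regular}), the metric of Theorem~\ref{thm:finite} on the positive side, and generalized dihedral regular subgroups for both negative cases. The step you single out as the main obstacle is not one. The identity $\Xi(G)=\Affpm(G)$ for abelian $G$ has a short elementary proof (Lemma~\ref{lem:affine}), and the negative direction needs only $\Affpm(G)\subseteq\Xi(G)$. The genuine gaps are the two non-isomorphism claims $K\not\cong G$, which you leave open.

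\emph{Abelian case.} Take $0\neq 2G\neq G$. You need $c\notin A$, not merely $c\notin 2G$; otherwise $K$ is not transitive. Your fallback for Boolean $A$ fails in general: for $G=\Cyc{4}$ the only index-two subgroup is $\{0,2\}$, so no $A$ contains an element of order four. No fallback is needed. If $A$ is Boolean, then $\tau_a^2=\tau_{2a}=\id$ for $a\in A$, and every $\refl_{c+a}$ is an involution. So $K$ is Boolean, whereas $G$ is not, because $2G\neq\{0\}$.

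\emph{Generalized dicyclic case.} Your candidate is the right one. With $\omega=R_t\circ F$ one has $F\circ R_t=R_z\circ\omega$, so either composition order generates $K=R(A)\cup R(A)\omega\cong\Dih(A)$. This group is regular, and it is nonabelian because $A$ is not Boolean. It works uniformly, so $Q_8\times B_0$ needs no separate subgroup. But ``isomorphic to $A\rtimes\Ctwo$, hence not isomorphic to $G$'' is not an argument. Both groups are nonabelian extensions of $A$ by $\Ctwo$, possibly infinite, and an abstract isomorphism need not respect $A$. You need an isomorphism invariant, and the paper uses generation by involutions. From $\omega R_a\omega=R_{a^{-1}}$, the maps $\omega$ and $R_a\omega$ are involutions, and $R_a=(R_a\omega)\omega$. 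So $K$ is generated by involutions. In $G$, however, $(at)^2=z\neq e$ for every $a\in A$, so every involution lies in the proper subgroup $A$, and $G$ is not generated by involutions.
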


\begin{corollary}[Additive groups of fields]\label{cor:fields}
The additive group of every field is natural, without a cardinality
restriction. More generally, the additive group of every vector space
over $\Q$ or a prime field is natural.
\end{corollary}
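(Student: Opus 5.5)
The plan is to derive the corollary directly from part~(1) of Theorem~\ref{thm:classification}, applied to the additive group of the field or vector space. Since these groups are abelian, it suffices to check that every vector space $V$ over $\Q$ or over a prime field $\F_p$ satisfies $2V=V$ or $2V=\{0\}$. The additive group of a field $K$ is a vector space over its prime field, which is either $\Q$ or $\F_p$. So the statement about fields is the special case $V=K$ of the statement about vector spaces.

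We split according to the scalar field.
\begin{enumerate}
\item If the scalars are $\Q$, then multiplication by $2$ is invertible on $V$, with inverse given by scalar multiplication by $\tfrac12$. Hence every $v\in V$ equals $2\cdot(\tfrac12 v)$, and $2V=V$.
\item If the scalars are $\F_p$ with $p$ odd, then $2$ is a unit in $\F_p$. The same argument, using the inverse of $2$ modulo $p$, gives $2V=V$.
\item If the scalars are $\F_2$, then $2v=0$ for every $v$, so $2V=\{0\}$. In this case $V$ is Boolean.
\end{enumerate}

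In every case the condition of Theorem~\ref{thm:classification}(1) holds, so $V$ is natural. The theorem also supplies a naturalizing metric with at most $32$ values.

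Theorem~\ref{thm:classification} is stated for arbitrary groups with no cardinality hypothesis. Therefore no restriction on $\dim V$ or $|K|$ is needed, which gives the clause ``without a cardinality restriction.''

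There is no real obstacle here; all the difficulty sits in Theorem~\ref{thm:classification}. The only point needing care is the reduction from a field $K$ to a vector space over its prime field: we must observe that the additive group of $K$, together with multiplication by elements of the prime subfield, satisfies the vector-space axioms. This follows immediately from the field axioms.
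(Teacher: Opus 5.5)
Your proposal is correct and follows the paper's own proof. Both reduce a field to a vector space over its prime field $\Q$ or $\F_p$, observe that $2V=V$ in characteristic zero or odd characteristic and $2V=\{0\}$ in characteristic two, and then apply the abelian part of Theorem~\ref{thm:classification}, which carries no cardinality hypothesis.
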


The field corollary uses only the abelian classification: multiplication
by two on a vector space over a prime field is either zero or invertible.
Section~\ref{sec:consequences} gives the proof and the square-root
consequence. These results address the cardinality-free naturality
questions in \cite[Sections~7.1--7.6]{KnillNatural}.

\subsection*{Earlier work and the constructions}
Byrne, Donner, and Sibley~\cite{ByrneDonnerSibley} computed the
inverse-pair symmetry group for finite groups. Leemann and de la
Salle~\cite[Theorem~2]{LeemannSalle} treat arbitrary groups. In
particular, $\Xi(G)=R(G)$ for nonabelian, nongeneralized-dicyclic groups;
for abelian groups it consists of translations and translated inversions.
The quaternion--Boolean case has an identity stabilizer of order eight.
We use the established nonabelian result and prove the abelian assertion
directly in Lemma~\ref{lem:affine}.

When at most continuum many inverse pairs occur, separate real distances
in $[1,2]$ realize the canonical coloring. At larger cardinalities the
construction must recover individual inverse pairs from shared labels.
The reflection-group construction of~\cite{SutherlandReflection} does
this with seven distances for groups generated by involutions. Here
subgroup-entry ranks (Section~\ref{sec:ranks}) replace coordinates.
A binary rank label, together with recognition of a symmetric generating
set, determines every generator inverse pair. At each base point an
isometry can therefore only reverse individual generator steps. A word
of length $n$ has at most $2^n$ candidate images, and a proper labeling
of a conflict graph separates their distinct inverse pairs. Using all
word lengths gives the following result.

\begin{theorem}[Countable-valued realization]\label{thm:countable}
For every group $G$, there is a right-invariant metric $d$ satisfying
\[
 \Isom(G,d)=\Xi(G),\qquad
 \im(d)\subseteq\{0\}\cup\{1+2^{-j}:j\geq1\}.
\]
The metric is bounded, uniformly discrete, and complete.
\end{theorem}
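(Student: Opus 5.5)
We build the metric from the length function $L$ and the paragraph preceding the statement. We choose a symmetric generating set $S$ of $G$ (e.g.\ $S=G$, or a minimal one built by transfinite recursion through subgroup-entry ranks) and define $L$ on generator pairs $[s]$, $s\in S$, by one label, and on pairs $[g]$, $g\notin S\cup\{e\}$, by a label recording word-length data. Concretely, we aim for $L(g)=1+2^{-j}$ for suitable $j\ge1$. Any choice of $L$ with values in $\{1+2^{-j}\}$ automatically satisfies the triangle inequality, since all nonzero values lie in $(1,3/2]$ and $a+b\ge 2>c$. Thus $d(x,y)=L(xy^{-1})$ is a right-invariant metric which is bounded, uniformly discrete (hence complete, since Cauchy sequences are eventually constant), and has $R(G)\le\Xi(G)\le\Isom(G,d)$. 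The entire content is therefore the reverse inclusion $\Isom(G,d)\subseteq\Xi(G)$.

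The labels would be arranged in stages. First, the rank label: we reserve the values $1+2^{-1}$ and $1+2^{-2}$ as a binary rank label on generator pairs, using a well-ordering of $G$ and the rank at which $g$ first enters the subgroup generated by earlier elements. The key lemma is that this binary label, together with metric recognition of $S$ (the elements at distance in the reserved set from a base point), forces an isometry $F$ with $F(e)=e$ (after composing with a right translation) to satisfy $[F(xs)F(x)^{-1}]=[s]$ for every $x$ and $s\in S$. In other words, at each base point $F$ can only reverse individual generator steps. Second, the conflict labels: for each length $n$, a word $s_1\cdots s_n$ starting at $x$ has at most $2^n$ candidate images $s_1^{\pm1}\cdots s_n^{\pm1}$ under $F$. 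We form a conflict graph on inverse pairs, joining $[g]$ and $[h]$ when $h$ is a sign-variant of a word for $g$ of length $n$ and $[g]\ne[h]$. This graph has finite degree at each level, at most $2^n$ times the number of words, and counting only the shortest words bounds it. It therefore admits a proper labeling by countably many colors, and we spend the exponents $j\ge3$ on these colors, interleaved over $n$. We then induct on word length: the image pair $[F(x)F(y)^{-1}]$ is one of the sign variants of $[xy^{-1}]$ and carries the same label, so by properness it equals $[xy^{-1}]$. This gives $F\in\Xi(G)$.

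The main obstacle is the first stage, namely extracting individual generator inverse pairs from only two shared labels when $G$ has huge cardinality. Here the rank structure must distinguish generators metrically; if needed, we would use the short-word rigidity theorem of Leemann and de la Salle. We also have to ensure the conflict graph has a proper labeling with a single countable palette across all $n$ simultaneously. For this we would use that each vertex has only countably many conflicts overall, and apply a standard greedy coloring along the well-order, checking that the countable degree still permits a countable palette by ordering vertices by length first.
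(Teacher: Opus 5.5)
\textbf{There are two genuine gaps. The first, in your first stage, is decisive.} You put the rank bit $\epsilon(\rk(g))$ only on generator pairs, and label longer elements by length and conflict data alone. That information cannot single out individual generator pairs, so your key lemma is false as formulated.

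Take $G=\F_2^3$. The recursion always produces a basis $x_0,x_1,x_2$ with bits $0,1,0$. In a Boolean group all sign variations of a word coincide, so the conflict graphs have no edges and a constant conflict labeling is proper. Your level sets are then $\{x_0,x_2\}$, $\{x_1\}$, the three vectors of length two, and $\{x_0+x_1+x_2\}$. The linear map swapping $x_0$ and $x_2$ preserves all of them. It is therefore an isometry fixing $0$ that lies outside $\Xi(G)=R(G)$.

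The missing idea is to record the rank bit also on $\mathcal S^2$; the paper records it on every element. For the triangular generating set with $x_\alpha\notin G_\alpha$, Lemma~\ref{lem:rank} gives $\rk(uv^{-1})=\max\{\alpha,\beta\}$ for $u\in\mathcal B_\alpha$, $v\in\mathcal B_\beta$, $\alpha\ne\beta$. So differences of generators carry the pair labels $\epsilon(\max\{\alpha,\beta\})$. These labels do two things:
\begin{itemize}
\item They let the twin test of Lemma~\ref{lem:pairs} identify the classes $\mathcal B_\alpha$; the witness $w=x_{\alpha+1}$ separates pairs with equal bits.
\item Through Lemma~\ref{lem:ordinal}, they rigidify the well-order: two same-label vertices $\alpha<\beta$ are ordered via the opposite-label vertex $\alpha+1$ between them.
\end{itemize}
In the example, $x_0+x_1$ gets bit $1$ and $x_1+x_2$ gets bit $0$, which destroys the swap.

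Your fallback cannot replace this step. The Leemann--de la Salle short-word theorem \emph{assumes} that $F$ already preserves each inverse pair $[u]$ for short $u$, and that is exactly what must be extracted from shared labels. The paper's countable proof uses no external theorem. Also, $S=G$ is not an option, since the rank identity needs the triangular recursion.

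\textbf{The second gap is in the conflict-graph step.} Counting $2^n$ variants per word bounds only out-degrees, and only once one shortest word per pair is fixed. The undirected graph can have infinite, even uncountable, degree, so your finite or countable degree claims fail.

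Here is an example. Let $G$ be free abelian on $a,b,c$ and $y_i$, and run the recursion on a well-order beginning $a,b,c,s_1,y_1,s_2,y_2,\dots$, where $s_i=a+b+c-2y_i$. Then:
\begin{itemize}
\item $h=a+b+c$ and every $h-4y_i$ have word length three;
\item every shortest word for $\pm(h-4y_i)$ is an ordering of $\pm(s_i,-y_i,-y_i)$;
\item each such word has $\pm h$ among its sign variations.
\end{itemize}
So $[h]$ is adjacent to every $[h-4y_i]$ in $\mathcal D_3$, and uncountably many of them if $i$ ranges over $\omega_1$.

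The paper handles this as follows. It fixes one word per pair and draws arcs to the pairs of length-$n$ variations, which gives out-degree at most $2^n-1$. It then applies de Bruijn--Erd\H{o}s (Lemma~\ref{lem:coloring}): finite subgraphs are $2\Delta$-degenerate, and compactness gives a proper labeling with $2^{n+1}-1$ labels. Since the color records $\ell(g)$, each length is labeled separately and no common palette across $n$ is needed. Finally, propagation should iterate $F(sx)\in[s]F(x)$ along that same fixed word. This places $F(g_Px)F(x)^{-1}$ in $\mathcal V(P)$, where properness applies directly without any induction on length.
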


The countable construction in Sections~\ref{sec:ranks}--\ref{sec:countable}
uses no external rigidity theorem. Section~\ref{sec:finite} states the
short-word and orientation-rigidity inputs of Leemann and de la Salle in
their left-translation convention and proves the conversion to ours.
Their hypotheses allow arbitrary groups. Short words of length at most
three suffice for the $32$-value bound. A direct two-letter abelian
argument gives $17$ values, and the Boolean argument gives $5$; the
latter bound is sharp for $\Cyc{2}^3$.

Section~\ref{sec:natural} combines the realization theorem with
regular-subgroup arguments, including both negative cases, to prove the
classification. Section~\ref{sec:consequences} compares these bounds with
uncolored Cayley-graph results. Appendix~\ref{app:coordinates} constructs
one countable-valued metric on an essential divisible hull that works
simultaneously on every subgroup containing fixed coordinate markers.
Appendix~\ref{app:signed} gives signed-basis metrics with at most $p+5$
values for odd $p$, and a six-valued characteristic-two alternative.
These appendices refine the metric constructions, rather than supply
additional hypotheses for the classification.

Related colored-representation questions appear in
\cite{GrechKisielewicz}; the competing regular actions studied in
\cite{DobsonMorris} are relevant to nonnaturality. The
color-permuting and color-respecting questions in
\cite{AlimirzaeiMorris,AlimirzaeiMorrisAbelian} retain the canonical
Cayley coloring. Our question instead concerns its coarsening to few
real distances at arbitrary cardinality.

\subsection*{Notation and conventions}
We reserve \emph{color} for these Cayley-edge data, and
\emph{conflict label} for the vertex labels $\chi_n$ in
Section~\ref{sec:conflicts}.

The fixed generating family is $\mathcal X=\{x_\alpha\}$, its symmetric
closure is $\mathcal S$, its inverse pairs are $\mathcal B_\alpha$, and
its subgroup chain is $G_\alpha$. We write $\rk(g)$ for entry rank,
$\pi(g)$ for its binary label, and $\varrho$ for the decoder of that
label from a color. Word length is $\ell$, inverse pairs of length $n$
form $\mathcal P_n$, their sign-variation sets are $\mathcal V(P)$, and
their conflict graph is $\mathcal D_n$. The integer $\Delta$ denotes an
out-degree bound. A generic generating set in an external theorem is $S$,
not the fixed $\mathcal S$. We write $J$ for inversion and
$\mathcal L_g$ for left translation, reserving $L$ for metric lengths.
A generic symmetric subset in Section~\ref{sec:finite} is $\mathcal W$.

We use $A$ for an abelian group, $K$ for a regular subgroup, $N$ for the
index-two subgroup in the negative abelian argument, $\refl_c(x)=c-x$
for a reflection, and $\omega$ for the dicyclic coset-swapping
involution. In Appendix~\ref{app:coordinates}, $E$ is the divisible hull,
$H$ is an intermediate subgroup, $\mathsf Q_\tau$ is a coordinate type,
$b_\alpha$ is a marker, $\mathcal M_\alpha$ is its signed pair,
$M=\langle b_\alpha\rangle$ is the marker subgroup, $M^{\pm}$ is the
union of the signed pairs,
$\prof(x)$ is a profile, and $\sz(x)=|\supp(x)|$.
In Appendix~\ref{app:signed}, $V$ is a vector space with basis
$v_\alpha$, $\mathcal U$ is its signed basis, and $\Lambda$ encodes
scalar classes. The letter $p$ always denotes a prime, $\lambda$ a limit
ordinal, and $\Cyc{n}$ a cyclic group (with $\Cyc{p^\infty}$ denoting
the Pr\"ufer group). Other locally quantified variables have the domains
specified where they occur.

The trivial group has the zero metric; it satisfies all metric and
naturality assertions below. For its coordinate construction take
$E=M=0$ and no markers. Construction proofs henceforth treat nontrivial
groups and nonzero vector spaces.

\section{Unavoidable symmetries and metric encodings}\label{sec:unavoidable}

\begin{lemma}[Unavoidable symmetries]\label{lem:unavoidable}
For every group $G$,
\[
 \Xi(G)=\bigcap_{d\text{ right-invariant metric on }G}\Isom(G,d).
\]
\end{lemma}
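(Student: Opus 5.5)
The proof is two inclusions. The inclusion $\Xi(G)\subseteq\bigcap_d\Isom(G,d)$ is immediate from the length formula. If $d$ is right-invariant with length $L(g)=d(g,e)$, then $d(x,y)=L(xy^{-1})$, and $L$ is constant on each inverse pair $[g]$ because $L(g^{-1})=L(g)$. So for $F\in\Xi(G)$ we have $[F(x)F(y)^{-1}]=[xy^{-1}]$, which gives
\[
 d(F(x),F(y))=L(F(x)F(y)^{-1})=L(xy^{-1})=d(x,y).
\]
Since $F$ is a bijection by definition of $\Sym(G)$, it is a surjective distance-preserving map, that is, an isometry of $d$.

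For the reverse inclusion I will build one right-invariant metric $d_0$ whose isometries all lie in $\Xi(G)$. The idea is to give distinct inverse pairs distinct lengths. Let $\mathcal P=\{[g]:g\ne e\}$ be the set of nontrivial inverse pairs, and choose an injection $\phi:\mathcal P\to[1,2]$. Such an injection exists only when $|\mathcal P|\le 2^{\aleph_0}$, so for large $G$ I will instead use an injection into an ordered set of positive reals of suitable size. Since the lemma places no restriction on the values, a cleaner choice is to work in a large real-closed ordered field or simply to take values in $\{1\}\cup\{\ldots\}$. But values must be real, so I will use a family of metrics instead: for each nontrivial pair $P$, let $L_P(g)=2$ if $[g]=P$, $L_P(g)=1$ if $g\ne e$ and $[g]\ne P$, and $L_P(e)=0$.

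Each $L_P$ takes values in $\{0,1,2\}$ and is inverse-invariant. The triangle inequality $L_P(xz^{-1})\le L_P(xy^{-1})+L_P(yz^{-1})$ holds because every nonzero value lies in $[1,2]$: if the left side is nonzero, at least one term on the right is nonzero, so the right side is at least $1$. The one case needing care is when the left side equals $2$ and only one term on the right is nonzero. Then that term equals the left side's group element, and so it also has value $2$. So $d_P(x,y)=L_P(xy^{-1})$ is a right-invariant metric.

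Now take $F\in\bigcap_d\Isom(G,d)$, and fix $x\ne y$ with $P=[xy^{-1}]$. Since $F$ preserves $d_P$, we get $d_P(F(x),F(y))=2$, hence $[F(x)F(y)^{-1}]=P$. For $x=y$ the condition defining $\Xi(G)$ holds trivially. Every isometry is a bijection, so $F\in\Sym(G)$, and therefore $F\in\Xi(G)$.

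This argument avoids any cardinality issue, because it uses one metric per pair rather than a single separating metric. The only step needing attention is checking the triangle inequality for the two-valued lengths, and that is elementary.
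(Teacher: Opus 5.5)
Your proof is correct and takes the paper's approach: the forward inclusion is the same, and for the converse both arguments use, for each nonidentity inverse pair, a right-invariant metric with values in $\{0,1,2\}$ that singles out that pair (the paper gives the pair length $1$ and everything else length $2$ and argues by contradiction from a failure of \eqref{eq:Xi}, while you swap the two values and argue directly). Your digression about a single injective encoding into $[1,2]$ can be deleted, since the per-pair metrics already avoid any cardinality restriction.
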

\begin{proof}
Write $d(x,y)=L(xy^{-1})$. Symmetry gives $L(g)=L(g^{-1})$.
A permutation satisfying~\eqref{eq:Xi} therefore preserves $d$.

Conversely, suppose $F$ fails~\eqref{eq:Xi} at $x,y$. Then $x\neq y$,
and the two nonidentity inverse pairs $[xy^{-1}]$ and
$[F(x)F(y)^{-1}]$ differ. Put $L(e)=0$, give the first inverse pair length
$1$, and give every other nonidentity element length $2$. The resulting
$d(u,v)=L(uv^{-1})$ is a right-invariant metric: symmetry is immediate,
and for three distinct points the sum of two distances is at least $2$,
while the third distance is at most $2$. This metric is not preserved by $F$.
\end{proof}

\begin{lemma}[Encoding colors by distances]\label{lem:encoding}
Let $\mathcal C$ be an arbitrary set of colors and let
$C:G\setminus\{e\}\to\mathcal C$ satisfy $C(g)=C(g^{-1})$.
Suppose that at most continuum many colors occur, that is,
$|C(G\setminus\{e\})|\leq\mathfrak c=|\mathbb R|$.
Choose distinct numbers $\mu_\gamma\in[1,2]$ for
$\gamma\in C(G\setminus\{e\})$ and set
\[
 L(e)=0,\qquad L(g)=\mu_{C(g)}\quad(g\neq e),\qquad
 d(x,y)=L(xy^{-1}).
\]
Then $d$ is a right-invariant metric. Its isometries are exactly the
permutations preserving the pair colors $C(xy^{-1})$ for $x\neq y$.
\end{lemma}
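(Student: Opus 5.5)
The argument has two parts: first check that $d$ is a right-invariant metric, then identify its isometry group with the color-preserving permutations.

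\textbf{Metric axioms.} Since $L(e)=0$ and $L(g)\ge 1$ for $g\ne e$, we have $d(x,y)=0$ exactly when $xy^{-1}=e$, that is, when $x=y$.

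\begin{itemize}
\item \emph{Symmetry.} We have $d(y,x)=L(yx^{-1})=L((xy^{-1})^{-1})$, which equals $L(xy^{-1})$ because $C(g)=C(g^{-1})$.
\item \emph{Right invariance.} We have $(xg)(yg)^{-1}=xy^{-1}$, so $d(xg,yg)=d(x,y)$.
\item \emph{Triangle inequality.} We need $d(x,z)\le d(x,y)+d(y,z)$. If two of the three points coincide, the inequality is trivial. If all three are distinct, the right side is at least $1+1=2$, while the left side is at most $2$, since all values lie in $[1,2]$.
\end{itemize}

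This is the same "values in $[1,2]$" trick used in Lemma~\ref{lem:unavoidable}.

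\textbf{Isometries.} Isometries are surjective by convention, and distance-preserving maps of a metric space are injective, so every isometry is a permutation of $G$. For a permutation $F$ and $x\ne y$, note that $F(x)\ne F(y)$. Because the $\mu_\gamma$ are pairwise distinct, the map $\gamma\mapsto\mu_\gamma$ is injective on the occurring colors. Therefore
\[
 d(F(x),F(y))=d(x,y)
 \iff \mu_{C(F(x)F(y)^{-1})}=\mu_{C(xy^{-1})}
 \iff C(F(x)F(y)^{-1})=C(xy^{-1}).
\]
For $x=y$ both sides are zero. Hence $F$ is an isometry exactly when it is color-preserving.

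\textbf{Choice of the $\mu_\gamma$.} The only step that needs any care is the existence of these numbers. The interval $[1,2]$ has cardinality $\mathfrak c$, so the hypothesis that at most $\mathfrak c$ colors occur gives an injection of the occurring colors into $[1,2]$. This is the only place the cardinality assumption is used.

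Everything else is routine, and I expect no real obstacle.
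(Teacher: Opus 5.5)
Your proof is correct and follows the paper's argument: positivity and inverse-invariance of $L$ give the metric axioms, right invariance comes from $(xg)(yg)^{-1}=xy^{-1}$, the triangle inequality is the $[1,2]$ argument of Lemma~\ref{lem:unavoidable}, and injectivity of $\gamma\mapsto\mu_\gamma$ identifies the isometries with the color-preserving permutations. Your remarks on why isometries are permutations and on where the cardinality bound is used only spell out what the paper's short proof leaves implicit.
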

\begin{proof}
The function $L$ is even and positive off $e$. Right invariance follows
from $(xg)(yg)^{-1}=xy^{-1}$. The triangle inequality is the same
$[1,2]$ argument as in Lemma~\ref{lem:unavoidable}. Injectivity of the
numerical color assignment proves the last assertion.
\end{proof}

\begin{remark}
For infinite $G$, the canonical coloring has exactly $|G|$ occurring
colors: each nonidentity inverse pair has one or two elements. Thus its
separate-distance encoding fails the hypothesis when $|G|>\mathfrak c$.
The constructions below use countably or finitely many colors, so the
hypothesis of Lemma~\ref{lem:encoding} holds regardless of $|G|$.
\end{remark}

\Needspace{8\baselineskip}
\begin{lemma}[Affine inverse-pair symmetries]\label{lem:affine}
For an abelian group $A$, written additively, set
\[
 \begin{aligned}
 T(A)&=\{x\mapsto c+x:c\in A\},\\
 \Affpm(A)&=\{x\mapsto c+\sigma x:c\in A,\ \sigma\in\{1,-1\}\}.
 \end{aligned}
\]
Then $\Xi(A)=\Affpm(A)$. If $2A=\{0\}$, this is just $T(A)$.
\end{lemma}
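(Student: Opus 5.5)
The plan is to prove the two inclusions separately. Throughout, for $x\neq y$ the defining condition~\eqref{eq:Xi}, written additively, says $F(x)-F(y)\in\{x-y,\,y-x\}$.

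\emph{Inclusion $\Affpm(A)\subseteq\Xi(A)$.} Take $F(x)=c+\sigma x$ with $\sigma\in\{1,-1\}$. It is a bijection, since its inverse is $x\mapsto \sigma(x-c)$. It satisfies $F(x)-F(y)=\sigma(x-y)\in[x-y]$, so $F\in\Xi(A)$.

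\emph{Inclusion $\Xi(A)\subseteq\Affpm(A)$.} Let $F\in\Xi(A)$.
\begin{enumerate}
\item Normalize $F$. Put $c=F(0)$. Since $T(A)\leq\Xi(A)$ and $\Xi(A)$ is a group, the map $F_0(x)=F(x)-c$ lies in $\Xi(A)$ and fixes $0$. It therefore suffices to show that $F_0=\sigma\cdot\id$ for a single sign $\sigma$.
\item Apply the condition with $y=0$. This gives $F_0(x)\in\{x,-x\}$ for every $x$. If $2x=0$, then $F_0(x)=x=-x$, so any sign is consistent at $x$. If $2x\neq0$, the sign $\eps(x)\in\{1,-1\}$ with $F_0(x)=\eps(x)x$ is uniquely determined.
\item Show the sign is constant on $\{x:2x\neq0\}$. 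This is the only real step. Suppose $F_0(x)=x$ and $F_0(y)=-y$, where $2x\neq0$ and $2y\neq0$. The condition at the pair $x,y$ requires
\[
x+y\in\{x-y,\;y-x\}.
\]
The first option gives $2y=0$ and the second gives $2x=0$; both are excluded.
\end{enumerate}

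Let $\sigma$ be this common sign, or $\sigma=1$ if $2A=\{0\}$. Combining steps 2 and 3 gives $F_0(x)=\sigma x$ for all $x$. Hence $F(x)=c+\sigma x$ and $F\in\Affpm(A)$.

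\emph{The case $2A=\{0\}$.} Here $-x=x$ for every $x$, so each map $x\mapsto c-x$ equals $x\mapsto c+x$, and $\Affpm(A)=T(A)$.

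The only point needing care is the sign-consistency check in step 3. It is handled by the two-case computation above, and it is exactly why elements of order at most two must be set aside before comparing signs.
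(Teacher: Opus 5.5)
Your proposal is correct and follows essentially the paper's argument: normalize by a translation, use the pair with $0$ to get $F_0(x)\in\{x,-x\}$, and rule out mixed signs on elements with $2x\neq0$ via the condition $x+y\in\{x-y,\,y-x\}$. The only difference is cosmetic: the paper fixes one reference element $a$ with $2a\neq0$, composes with inversion so that $F(a)=a$, and compares every $x$ against $a$, whereas you compare arbitrary pairs directly.
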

\begin{proof}
Normalize $F\in\Xi(A)$ by a translation so that $F(0)=0$. Then
\[
 [F(x)]=[F(x)-F(0)]=[x-0]=[x],
\]
so $F(x)\in\{x,-x\}$ for every $x$. If $2A=\{0\}$, this gives $F=\id$.
Otherwise choose $a$ with $2a\ne0$ and compose with inversion if
necessary so that $F(a)=a$. If $F(x)\ne x$, then $F(x)=-x$ and
$2x\ne0$. The inverse-pair condition at $a,x$ gives
\[
 a+x\in\{a-x,-a+x\},
\]
forcing $2x=0$ or $2a=0$, a contradiction. Thus the normalized map is
the identity. Conversely, translations and translated inversions
preserve every inverse-pair color.
\end{proof}
In particular, affine sign maps preserve every translation-invariant
metric. We use this lemma in both the universal classification and the
coordinate proof.

\section{Subgroup-entry ranks and generator pairs}\label{sec:ranks}

Assume $G\ne\{e\}$ and fix a well-order $\prec$ on $G\setminus\{e\}$.
Define the retained elements by transfinite recursion:
\begin{equation}\label{eq:triangular}
 \begin{aligned}
 G_\alpha&=\langle x_\beta:\beta<\alpha\rangle,\nobreak\\
 x_\alpha&=\min\nolimits_\prec(G\setminus G_\alpha)
                 &&\text{if }G_\alpha\ne G.
 \end{aligned}
\end{equation}
Stop at the first ordinal $\kappa$ with $G_\kappa=G$, and put
$\mathcal X=\{x_\alpha:\alpha<\kappa\}$. This stopping ordinal exists:
otherwise the distinct choices would give an injection of the
successor cardinal $|G|^+$ into $G$.
Thus $\kappa$ indexes the retained elements, $G_\alpha$ is defined for
$\alpha\leq\kappa$, and
\[
 x_\alpha\notin G_\alpha\quad\text{for every }\alpha<\kappa,
 \qquad G_\kappa=G.
\]
This is the construction of \cite[Section~2]{SutherlandReflection}
without the involution hypothesis. The ordered condition does not assert
that $\mathcal X$ is inclusion-minimal among generating sets.

Put
\[
 \mathcal S=\mathcal X\cup \mathcal X^{-1},\qquad \mathcal B_\alpha=\{x_\alpha,x_\alpha^{-1}\}.
\]
The sets $\mathcal B_\alpha$ are pairwise disjoint and have one or two elements.
An equality between generators up to inversion at different indices would
contradict~\eqref{eq:triangular} at the larger index.

At a nonzero limit ordinal $\lambda\leq\kappa$,
\[
 G_\lambda=\bigcup_{\alpha<\lambda}G_\alpha,
\]
because a group word involves only finitely many generators. Every
nonidentity element therefore has a unique \emph{subgroup-entry rank}
$\rk(g)<\kappa$, determined by
\begin{equation}\label{eq:entryrank}
 g\in G_{\rk(g)+1}\setminus G_{\rk(g)}.
\end{equation}

\begin{lemma}[Rank identities]\label{lem:rank}
For $g\neq e$, one has $\rk(g^{-1})=\rk(g)$. If $\rk(u)<\rk(v)$, then
$\rk(uv^{-1})=\rk(v)$. Consequently, if $u\in \mathcal B_\alpha$, $v\in \mathcal B_\beta$,
and $\alpha\neq\beta$, then
\begin{equation}\label{eq:crossrank}
 \rk(uv^{-1})=\max\{\alpha,\beta\}.
\end{equation}
\end{lemma}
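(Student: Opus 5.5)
The plan is to argue directly from the definition~\eqref{eq:entryrank}, using only that each $G_\alpha$ is a subgroup and that the chain $(G_\alpha)_{\alpha\le\kappa}$ is increasing. For the first identity, $g\in G_{\rk(g)+1}\setminus G_{\rk(g)}$ and both sets are closed under inversion (a subgroup and the complement of a subgroup). Hence $g^{-1}\in G_{\rk(g)+1}\setminus G_{\rk(g)}$, and uniqueness of the entry rank gives $\rk(g^{-1})=\rk(g)$.

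For the second identity, write $\alpha=\rk(u)<\beta=\rk(v)$. Then $u\in G_{\alpha+1}\subseteq G_\beta$, and $v\in G_{\beta+1}\setminus G_\beta$. Membership of $uv^{-1}$ in $G_{\beta+1}$ is immediate from closure. For non-membership in $G_\beta$, argue by contradiction: if $uv^{-1}\in G_\beta$, then $v=(uv^{-1})^{-1}u\in G_\beta$, which contradicts the choice of $\beta$. This also shows $uv^{-1}\ne e$, so $\rk(uv^{-1})$ is defined and equals $\beta=\rk(v)$. By symmetry and the first identity, the case $\rk(v)<\rk(u)$ gives $\rk(uv^{-1})=\rk\bigl((vu^{-1})^{-1}\bigr)=\rk(vu^{-1})=\rk(u)$. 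So unequal ranks always yield the larger rank.

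For~\eqref{eq:crossrank}, the only input needed is $\rk(x_\alpha)=\alpha$. Indeed $x_\alpha\in G_{\alpha+1}$ because it is among the generators of $G_{\alpha+1}$, and $x_\alpha\notin G_\alpha$ by~\eqref{eq:triangular}. By the first identity, every element of $\mathcal B_\alpha$ then has rank $\alpha$. With $u\in\mathcal B_\alpha$, $v\in\mathcal B_\beta$, and $\alpha\ne\beta$, the ranks differ, and the previous paragraph gives $\rk(uv^{-1})=\max\{\alpha,\beta\}$.

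None of these steps is a real obstacle. The only point needing care is that $\rk$ is defined only off $e$, so one must check that $uv^{-1}\ne e$; the non-membership argument above supplies this automatically. One should also note that $\alpha+1\le\kappa$ holds for $\alpha<\kappa$, so that $G_{\alpha+1}$ is defined.
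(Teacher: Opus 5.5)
Your proof is correct and follows the paper's argument essentially step for step: the subgroups $G_\alpha$ and their complements are closed under inversion, non-membership of $uv^{-1}$ in $G_\beta$ comes from $v=(uv^{-1})^{-1}u$ (the paper writes $v^{-1}=u^{-1}(uv^{-1})$), the opposite inequality is handled by inverting, and $\rk(x_\alpha^{\pm1})=\alpha$. Your explicit check that $uv^{-1}\ne e$, so that $\rk(uv^{-1})$ is defined, is a small point the paper leaves implicit.
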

\begin{proof}
Each $G_\alpha$ is a subgroup, giving the inverse identity. Suppose
$\rk(v)=\beta>\rk(u)$. Then $u\in G_\beta$ and
$v\in G_{\beta+1}\setminus G_\beta$. The product $uv^{-1}$ belongs
to $G_{\beta+1}$; if it belonged to $G_\beta$, so would
$v^{-1}=u^{-1}(uv^{-1})$, a contradiction. Finally,
$\rk(x_\alpha^{\pm1})=\alpha$. The case of the opposite inequality follows
by taking the inverse of $uv^{-1}$.
\end{proof}

Define a binary function on the index ordinal by
\[
 \epsilon(0)=0,\qquad \epsilon(\alpha+1)=1-\epsilon(\alpha),\qquad
 \epsilon(\lambda)=0\quad(\lambda\text{ a nonzero limit}),
\]
and put
\[
 \pi(g)=\epsilon(\rk(g))\quad(g\neq e).
\]
Thus $\pi(g)=\pi(g^{-1})$.

\begin{lemma}[A two-labeled well-order]\label{lem:ordinal}
Let $\kappa$ be any ordinal, including $0$ or $1$.
Label each vertex $\alpha<\kappa$ by $\epsilon(\alpha)$ and each unordered
pair of distinct vertices by $\epsilon(\max\{\alpha,\beta\})$. Every
permutation preserving these vertex and pair labels is the identity.
\end{lemma}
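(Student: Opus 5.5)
The plan is to prove that a label-preserving permutation $f$ fixes every $\alpha<\kappa$, by transfinite induction on $\alpha$. The cases $\kappa=0,1$ are trivial. The pair labels only record $\epsilon$ of the larger endpoint, so the order itself is not directly visible. The idea is instead to recognize the least element of a final segment purely from the labels.

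\emph{Inductive setup.} Fix $\alpha<\kappa$ and assume $f(\beta)=\beta$ for all $\beta<\alpha$. Since $f$ is a bijection of $\kappa$ fixing $[0,\alpha)$ pointwise, it maps the final segment $U=[\alpha,\kappa)$ onto itself. Moreover, $f$ restricted to $U$ still preserves vertex labels and pair labels.

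\emph{Characterization of $\alpha$.} I will show that $\alpha$ is the unique $x\in U$ with the following property:
\[
\text{the label of } \{x,y\} \text{ equals } \epsilon(y) \text{ for every } y\in U\setminus\{x\}.
\]
This property is stated only in terms of vertex and pair labels inside $U$. Hence $f|_U$ preserves it, and uniqueness forces $f(\alpha)=\alpha$, completing the induction.

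First, $\alpha$ has the property. For $y\in U$ with $y\ne\alpha$ we have $y>\alpha$, so the pair label is $\epsilon(\max\{\alpha,y\})=\epsilon(y)$.

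Second, every $x'\in U$ with $x'>\alpha$ fails the property. Note $\alpha+1\le x'<\kappa$, so $\alpha+1\in U$.
\begin{itemize}
\item If $\epsilon(x')\neq\epsilon(\alpha)$, take $y=\alpha$. The pair $\{x',\alpha\}$ has label $\epsilon(x')\ne\epsilon(\alpha)=\epsilon(y)$.
\item If $\epsilon(x')=\epsilon(\alpha)$, then $x'\neq\alpha+1$, because $\epsilon(\alpha+1)=1-\epsilon(\alpha)$. So $\alpha+1<x'$, and we take $y=\alpha+1$. The pair $\{x',\alpha+1\}$ has label $\epsilon(x')=\epsilon(\alpha)\neq\epsilon(\alpha+1)=\epsilon(y)$.
\end{itemize}

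\emph{Main obstacle.} The real content is finding this label-definable characterization of the minimum. The only ingredient about $\epsilon$ that it uses is that $\epsilon$ alternates at successors, $\epsilon(\alpha+1)\ne\epsilon(\alpha)$. The value $\epsilon(\lambda)=0$ at limits plays no role. Limit stages of the induction need no separate treatment, since the hypothesis ``$f$ fixes all $\beta<\alpha$'' is handled uniformly for every $\alpha$.
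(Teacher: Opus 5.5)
Your proof is correct, but it is organized differently from the paper's. The paper does not induct. It shows that the order relation itself is recoverable from the labels. Opposite-label vertices are compared by the pair label. Same-label vertices $\alpha<\beta$ are compared by the existence of an opposite-label vertex strictly between them, with $\alpha+1$ as the witness. So a label-preserving permutation is an order automorphism of a well-order and hence the identity.

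You instead run the transfinite induction by hand. You identify $\alpha$ as the unique element of the tail $U=[\alpha,\kappa)$ lying below every opposite-label element of $U$. Your condition is automatic for same-label $y$, so this is exactly what it says. The transfer of the property from $\alpha$ to $f(\alpha)$ rests on $f(U)=U$ and preservation of vertex and pair labels, and you justify $f(U)=U$ correctly.

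The combinatorial core is the same in both proofs: the pair label reads off the larger endpoint's label, and $\alpha+1$ separates same-label vertices. The paper's route gives the stronger, parameter-free fact that the whole well-order is determined by the labels, and it then relies on the standard rigidity of well-orders. Yours is fully self-contained. However, it characterizes $\alpha$ only relative to the initial segment already fixed by the induction hypothesis, so it does not by itself show that the order is definable from the labels.
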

\begin{proof}
For vertices with different labels, the pair label is the label of the
larger vertex, so their order is determined by the labels. Suppose
$\alpha<\beta$ have the same label. Since the label changes at a
successor, $\alpha+1<\beta$ and $\epsilon(\alpha+1)\ne\epsilon(\alpha)$.
Thus $\alpha<\beta$ is equivalent, for same-label distinct vertices, to
existence of an opposite-label vertex $\gamma$ with
$\alpha<\gamma<\beta$. Both comparisons in this predicate concern
opposite-label vertices, so they are expressible from the labels.
Conversely, any such witness implies $\alpha<\beta$ by transitivity of
the original order. Therefore every order comparison is determined by
the labels. A label-preserving permutation is an order automorphism of
a well-order and hence is the identity.
\end{proof}

\begin{lemma}[Recognition of individual inverse pairs]\label{lem:pairs}
Let $G\ne\{e\}$, with $\mathcal S$, $\mathcal B_\alpha$, and $\pi$
fixed as in this section. Let $\mathcal C$ be an arbitrary set of colors
and $C:G\setminus\{e\}\to\mathcal C$ be inverse-invariant. Suppose
that membership in $\mathcal S$ is recoverable from $C(g)$ for all $g\ne e$.
Suppose also that there is a map $\varrho:\mathcal C\to\{0,1\}$ satisfying
$\varrho(C(g))=\pi(g)$ whenever $g\in(\mathcal S\cup \mathcal S^2)\setminus\{e\}$.
The map $\varrho$ may be chosen arbitrarily on colors not realized on
$(\mathcal S\cup\mathcal S^2)\setminus\{e\}$; no recovery of $\pi(g)$
is required at longer word lengths.
Color an unordered pair $\{x,y\}$ by $C(xy^{-1})$. Every color-preserving
permutation $f$ fixing $e$ satisfies
\[
 f(\mathcal B_\alpha)=\mathcal B_\alpha\quad(\alpha<\kappa).
\]
\end{lemma}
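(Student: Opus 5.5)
The plan is to turn $f$ into a label-preserving permutation of the index ordinal $\kappa$ and then argue as in Lemma~\ref{lem:ordinal}, with one change: instead of invoking that lemma directly, I would rerun its proof on $\mathcal S$, preordered by rank. The reason is that pairs from the \emph{same} block $\mathcal B_\alpha$ give products $u^{\pm2}$, whose rank label is not controlled. The hypotheses only give $\pi$ on $\mathcal S\cup\mathcal S^2$, and $u^2$ may lie in $G_\alpha$ or equal $e$. So the argument must never use same-block pair labels.

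\emph{Step 1 ($f$ permutes $\mathcal S$ and preserves vertex labels).} Since $f(e)=e$, color preservation at the pair $\{x,e\}$ gives $C(f(x))=C(x)$ for all $x\ne e$. Membership in $\mathcal S$ is recoverable from the color, so $f(\mathcal S)\subseteq\mathcal S$. The inverse $f^{-1}$ is also color-preserving and fixes $e$, so $f$ restricts to a bijection of $\mathcal S$. Applying $\varrho$ gives $\pi(f(u))=\pi(u)$ for $u\in\mathcal S$, that is, $\epsilon(\rk f(u))=\epsilon(\rk u)$. Recall $\rk(u)=\alpha$ exactly when $u\in\mathcal B_\alpha$.

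\emph{Step 2 (opposite labels).} Let $u,v\in\mathcal S$ have $\pi(u)\ne\pi(v)$. Then $u,v$ lie in different blocks, and by Step~1 so do $f(u),f(v)$. By \eqref{eq:crossrank}, $\pi(uv^{-1})=\epsilon(\max\{\rk u,\rk v\})$, which is the label of the larger-rank element. Since $uv^{-1}\in\mathcal S^2\setminus\{e\}$, color preservation and $\varrho$ give $\pi(f(u)f(v)^{-1})=\pi(uv^{-1})$. Hence $\rk u<\rk v$ if and only if $\rk f(u)<\rk f(v)$ for such $u,v$.

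\emph{Step 3 (equal labels).} Let $\pi(u)=\pi(v)$ with $\rk u=\alpha<\beta=\rk v$. Since $\epsilon$ changes at successors, $\alpha+1<\beta$, and $w=x_{\alpha+1}\in\mathcal S$ has the opposite label with $\alpha<\rk w<\beta$. By Step~2, $\rk f(u)<\rk f(w)<\rk f(v)$. Thus $f$ preserves strict rank order on all of $\mathcal S$. Applying the same argument to $f^{-1}$ gives the converse, so $f$ preserves the rank preorder in both directions. In particular, elements of equal rank go to elements of equal rank.

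\emph{Step 4 (conclusion).} Set $\phi(\alpha)=\rk f(x_\alpha)$. By Step~3, $\phi$ is well defined on blocks, strictly order-preserving, and surjective (use $f^{-1}$). So $\phi$ is an order automorphism of the well-order $\kappa$, hence the identity. Therefore $f(\mathcal B_\alpha)\subseteq\mathcal B_\alpha$, and with bijectivity $f(\mathcal B_\alpha)=\mathcal B_\alpha$.

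The main obstacle is Step~3. The difficulty is avoiding any reliance on same-block or same-label pair information, and checking that the witness $x_{\alpha+1}$ exists and has rank strictly between $\alpha$ and $\beta$. Both follow from the successor-flipping definition of $\epsilon$ and from $\alpha+1<\beta\le\kappa$.
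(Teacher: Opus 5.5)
Your proof is correct, but it reaches the conclusion by a different route from the paper's.

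\textbf{The paper's route.} The paper first recognizes the blocks and only then handles the order.
\begin{enumerate}
\item It defines a twin relation on $\mathcal S$: $u\sim v$ when $\pi(u)=\pi(v)$ and $\pi(uw^{-1})=\pi(vw^{-1})$ for every $w\in\mathcal S\setminus\{u,v\}$.
\item It checks that the classes of $\sim$ are exactly the $\mathcal B_\alpha$. Two elements of one block agree against every other $w$ by \eqref{eq:crossrank}. Same-label elements of blocks $\alpha<\beta$ are separated by $w=x_{\alpha+1}$.
\item Since $f$ transports $\sim$, it permutes the blocks. Lemma~\ref{lem:ordinal} is then applied as a black box to the induced permutation of $\kappa$.
\end{enumerate}

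\textbf{Your route.} You merge these stages. You show directly that $f$ preserves strict rank order on $\mathcal S$: for opposite labels through the cross-pair label, and for equal labels through the same witness $x_{\alpha+1}$. Totality of the rank order then gives preservation of equal rank, hence of the block partition. The induced map on $\kappa$ is an order automorphism. In effect you rerun the proof of Lemma~\ref{lem:ordinal} on $\mathcal S$ with its rank preorder.

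\textbf{What each buys.} Your version is shorter and avoids the universally quantified twin test. The paper's modular version isolates Lemma~\ref{lem:ordinal} as a reusable statement about labeled well-orders. It applies that lemma again in the appendices, where the blocks are recognized by other data: distances within opposite pairs, and support size.

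One nuance concerns your stated reason for avoiding same-block pair labels. The paper's twin test does read such labels, for instance $\pi(x_\alpha^2)$ when $u=x_\alpha$, $w=x_\alpha^{-1}$, and $v$ lies in another block. It reads them only inside a test where a single separating witness suffices, so their uncontrolled values never affect the classes. Your strict avoidance of them is therefore a design choice rather than a necessity.
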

\begin{proof}
Colors on pairs with endpoint $e$ show that $f(\mathcal S)=\mathcal S$ and preserve the
labels $\pi$ on $\mathcal S$. For $u,v\in \mathcal S$, declare $u\sim v$ if $u=v$, or if
\begin{equation}\label{eq:twins}
 \begin{split}
 \pi(u)&=\pi(v),\\
 \pi(uw^{-1})&=\pi(vw^{-1})\quad\text{for every }w\in \mathcal S\setminus\{u,v\}.
 \end{split}
\end{equation}
All arguments of $\pi$ in this test are nonidentity and belong to
$\mathcal S\cup \mathcal S^2$. Their image differences also belong to $\mathcal S\cup \mathcal S^2$,
since $f(\mathcal S)=\mathcal S$. Thus $\varrho$ recovers every rank label used on both
sides of the color comparisons. Both $f$ and $f^{-1}$ are
color-preserving, so the relation is transported in both directions.

Its classes are precisely the sets $\mathcal B_\alpha$. If distinct $u,v$ belong
to one $\mathcal B_\alpha$, every remaining $w\in \mathcal S$ belongs to some $\mathcal B_\gamma$
with $\gamma\neq\alpha$. Equation~\eqref{eq:crossrank} makes the two
pair labels in~\eqref{eq:twins} equal, and the radial labels agree.
Now let $u\in \mathcal B_\alpha$, $v\in \mathcal B_\beta$ belong to different pairs;
without loss of generality $\alpha<\beta$. Different
radial labels separate them. If those labels agree, set
$\gamma=\alpha+1<\beta$ and $w=x_\gamma$. Then
\[
 \pi(uw^{-1})=\epsilon(\gamma)\neq\epsilon(\beta)=\pi(vw^{-1}).
\]
If $x_\alpha$ is an involution, then
$\mathcal B_\alpha=\{x_\alpha\}$; the same separation argument shows
that no $v\notin\mathcal B_\alpha$ is related to it. Thus these classes
are singletons.

Hence $f$ induces a permutation of the classes $\mathcal B_\alpha$. Their vertex
labels are $\epsilon(\alpha)$, and their cross-pair labels are
$\epsilon(\max\{\alpha,\beta\})$. Lemma~\ref{lem:ordinal} fixes every
class individually.
\end{proof}

\begin{remark}\label{rem:relations}
No subgroup in the construction is assumed normal. Relations such as
$x_\alpha^2\in G_\alpha$ are allowed. The test~\eqref{eq:twins} omits
comparisons inside the pair being tested, and its radial label prevents
different generator pairs from merging. In particular, the argument does
not assert that products of two distinct generators avoid $\mathcal S$.
\end{remark}

\section{Conflict graphs for sign variations}\label{sec:conflicts}

The refined symmetric set $\mathcal S=\mathcal X\cup\mathcal X^{-1}$
generates $G$, since $\langle\mathcal S\rangle=G_\kappa=G$ by
\eqref{eq:triangular}. This is the generating-set construction of
\cite[Section~2, Lemma~2.1]{SutherlandReflection}, without the involution
hypothesis. Let $\ell$ be word length with respect to $\mathcal S$.
Then $\ell(g)$ is finite for every $g\in G$, $\ell(g^{-1})=\ell(g)$,
and $\ell(g)=1$ precisely on $\mathcal S$. For $n\geq1$, put
\[
 \mathcal P_n=\{[g]:\ell(g)=n\}.
\]
These sets are pairwise disjoint and together contain every nonidentity
inverse pair; thus the nonempty $\mathcal P_n$ partition those inverse pairs.
For each $P\in\mathcal P_n$, choose a representative $g_P$ and one shortest
word
\begin{equation}\label{eq:word}
 g_P=s_{P,1}\cdots s_{P,n},\qquad s_{P,i}\in \mathcal S.
\end{equation}
Fix these choices for the rest of the paper; the sign-variation sets
and conflict graphs below depend on them. No canonical choice is required.
Define
\begin{equation}\label{eq:variations}
 \mathcal V(P)=\{s_{P,1}^{\delta_1}\cdots s_{P,n}^{\delta_n}:
             \delta_i\in\{1,-1\}\}.
\end{equation}
There are at most $2^n$ elements in $\mathcal V(P)$, and $g_P\in \mathcal V(P)$. Relations
may identify several of these elements, make them the identity, or shorten
their word length. All these possibilities are permitted.

On $\mathcal P_n$, draw an arc $P\to Q$ when $Q\neq P$ and
\[
 Q=[v]\quad\text{for some }v\in \mathcal V(P)\text{ with }\ell(v)=n.
\]
The outgoing degree is at most $2^n-1$. Let $\mathcal D_n$ be the simple
undirected graph obtained by forgetting the directions; its degrees may
be unbounded. We use two results of de Bruijn and Erd\H{o}s.
Their Theorem~1 says that a graph admits a proper labeling with a fixed
finite number of labels if all its finite subgraphs do.
Their Theorem~3 says that if each vertex $v$ is assigned a set of at most
$\Delta$ other vertices, the vertex set splits into $2\Delta+1$ parts
within which neither vertex belongs to the other's assigned set.
Taking assigned sets to be outneighbors gives the following form.

\begin{lemma}[de Bruijn--Erd\H{o}s, {\cite[p.~372, Theorem~3]{deBruijnErdos}}]\label{lem:coloring}
Let $\Delta$ be a nonnegative integer. A simple graph obtained by forgetting
the directions of a directed graph of outgoing degree at most $\Delta$
has a proper vertex labeling with $2\Delta+1$ labels.
\end{lemma}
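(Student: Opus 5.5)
This form follows from Theorem~3 of de Bruijn and Erd\H{o}s as quoted just before the statement. The plan is to take each vertex's outneighbors as its assigned set, observe that every edge of the undirected graph is caught by one endpoint's assigned set, and then read the $2\Delta+1$ parts as label classes.

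Let $D$ be a directed graph on a vertex set $V$ with outgoing degree at most $\Delta$. Let $\Gamma$ be the simple undirected graph obtained by forgetting directions; loops are discarded and antiparallel arcs merge into a single edge. For each vertex $v$, let $A(v)\subseteq V\setminus\{v\}$ be its set of outneighbors in $D$. Then $|A(v)|\le\Delta$, and $v\notin A(v)$ because $\Gamma$ is simple. This is exactly the hypothesis of the quoted Theorem~3. It yields a partition $V=V_1\cup\dots\cup V_{2\Delta+1}$ such that, for distinct $u,w$ in a common part, $u\notin A(w)$ and $w\notin A(u)$.

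Next label each vertex by the index of its part. If $\{u,w\}$ is an edge of $\Gamma$, it came from an arc $u\to w$ or $w\to u$ of $D$. Hence $w\in A(u)$ or $u\in A(w)$, so $u$ and $w$ lie in different parts and receive different labels. The labeling is therefore proper with $2\Delta+1$ labels. Some parts may be empty, which is harmless, and the case $\Delta=0$ gives an edgeless graph with a single label.

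The only real issue is matching hypotheses. The cited theorem is stated for arbitrary, possibly infinite, vertex sets, so no cardinality restriction or choice beyond ZFC enters. If one wanted to avoid relying on the infinite case directly, a fallback is available. First prove the finite case: a finite graph with at most $\Delta|V|$ edges has a vertex of total degree at most $2\Delta$, and the same bound holds in every subgraph, so greedy labeling with $2\Delta+1$ labels works by induction. Then pass to arbitrary graphs with the compactness Theorem~1 of de Bruijn--Erd\H{o}s, since every finite subgraph is again an orientation-forgetting of a directed graph with outgoing degree at most $\Delta$. I would present the direct reduction and mention this route as a remark.
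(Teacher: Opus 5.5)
Your proof is correct, but your main argument takes a different route from the paper. You treat the lemma as a direct specialization of de Bruijn--Erd\H{o}s Theorem~3, taking the assigned set $A(v)$ to be the outneighbors of $v$. The paper makes exactly this remark just before the statement, but its proof does not use Theorem~3 as a black box. Instead it proves the statement from scratch, and that proof is your fallback. For finite $U$, the induced graph has at most $\Delta|U|$ edges, because each edge is witnessed by an arc with its tail in $U$. So some vertex has degree at most $2\Delta$, and removing such vertices successively and assigning labels in reverse order uses $2\Delta+1$ labels. The infinite case is then handled explicitly in the compact product $\{1,\ldots,2\Delta+1\}^Y$: each edge condition is closed, and the finite case supplies the finite intersection property. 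Your reduction is shorter and fully adequate given the citation, but it leaves both the finite combinatorics and the ZFC compactness step to the source. The paper's version is self-contained and shows exactly where compactness enters, borrowing only the idea of Theorem~1. One small point: justify $v\notin A(v)$ by defining $A(v)$ as the outneighbors other than $v$, since discarding loops only lowers $|A(v)|$. Simplicity of $\Gamma$ is not the right justification. In the paper's application the arcs $P\to Q$ require $Q\ne P$, so no loops arise anyway.
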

\begin{proof}
For a finite vertex set $U$, the induced graph has at most $\Delta|U|$
edges, since each edge is witnessed by an arc with its tail in $U$.
Every nonempty finite induced subgraph has a vertex of degree at most
$2\Delta$. Removing such vertices successively and assigning labels in
reverse order gives a proper labeling with $2\Delta+1$ labels.

For the compactness step of \cite[Theorem~1]{deBruijnErdos}, let $Y$ be
the vertex set and use the compact product $\{1,\ldots,2\Delta+1\}^Y$,
with discrete factors. For each edge the condition that its endpoint
coordinates differ is closed. Every finite collection of these conditions
is satisfiable by the finite-subgraph argument. The finite intersection
property gives a labeling satisfying all conditions. Compactness is used
in ZFC.
\end{proof}

For each $n\geq1$ choose a proper \emph{conflict labeling}
\begin{equation}\label{eq:qn}
 \chi_n:\mathcal P_n\longrightarrow\{1,\ldots,q_n\},\qquad
 q_1=1,\quad q_n=2^{n+1}-1\quad(n\geq2).
\end{equation}
Thus $\chi_n$ is a proper vertex coloring of $\mathcal D_n$ in the usual
graph-theoretic terminology: adjacent vertices receive distinct conflict
labels. We reserve \emph{color} for Cayley-edge data.
When $\mathcal P_n\ne\varnothing$, permute the labels so that $1$
occurs. For $n=1$ the graph has no edges, since the sign variations of a
generator form its own inverse pair. For $n\geq2$ the bound follows
from Lemma~\ref{lem:coloring}.

\section{The countable-valued construction}\label{sec:countable}

For $g\neq e$, define
\begin{equation}\label{eq:countcolor}
 C(g)=\bigl(\ell(g),\pi(g),\chi_{\ell(g)}([g])\bigr).
\end{equation}
For each length the last two coordinates have finite ranges, so the color
set is countable. Each coordinate is inverse-invariant. The first coordinate
recognizes $\mathcal S$, and the second gives the rank label needed in
Lemma~\ref{lem:pairs}.

Choose an injection $\nu$ from the occurring colors into the positive
integers. Set
\begin{equation}\label{eq:countmetric}
 L(e)=0,\qquad L(g)=1+2^{-\nu(C(g))}\quad(g\neq e),\qquad
 d(x,y)=L(xy^{-1}).
\end{equation}
Lemma~\ref{lem:encoding} makes $d$ a right-invariant metric with the palette
of Theorem~\ref{thm:countable}.

\begin{proof}[Proof of Theorem~\ref{thm:countable}]
Work with $G\ne\{e\}$ as above. Let $F$ be an isometry of
\eqref{eq:countmetric}. In Lemma~\ref{lem:pairs}, take $\varrho$ to be
the second-coordinate projection in \eqref{eq:countcolor}. For each
$x\in G$, the map $F_x(z)=F(zx)F(x)^{-1}$ is a surjective isometry
fixing $e$. For $z\ne w$, injectivity of the color encoding gives
\[
 \begin{aligned}
 C(F_x(z)F_x(w)^{-1})
 &=C(F(zx)F(wx)^{-1})\\
 &=C((zx)(wx)^{-1})=C(zw^{-1}).
 \end{aligned}
\]
Thus $F_x$ is color-preserving. Lemma~\ref{lem:pairs} yields
\begin{equation}\label{eq:steps}
 F(sx)\in[s]F(x)\qquad(s\in\mathcal S,\ x\in G).
\end{equation}
No homomorphism property has been assumed.

Fix $P\in\mathcal P_n$ and write $s_i=s_{P,i}$ in
\eqref{eq:word}. Starting with the rightmost letter gives
\[
 \begin{aligned}
 F(s_nx)&\in[s_n]F(x),\\
 F(s_{n-1}s_nx)&\in[s_{n-1}][s_n]F(x),\\
 &\ \vdots\\
 F(g_Px)&\in[s_1]\cdots[s_n]F(x)=\mathcal V(P)F(x).
 \end{aligned}
\]
For $n=1$ only the first line is needed. Set
\begin{equation}\label{eq:candidateimage}
 u=F(g_Px)F(x)^{-1}\in\mathcal V(P).
\end{equation}
The conclusion follows from three facts. First,
\eqref{eq:candidateimage} places $u$ in the sign-variation set, and
injectivity of $F$ gives $u\ne e$. Second, preservation of the distance
between $g_Px$ and $x$, and injectivity of $\nu$, give
\[
 C(u)=C(g_P),\qquad \ell(u)=n,\qquad \chi_n([u])=\chi_n(P).
\]
Third, if $[u]\ne P$, then $u\in\mathcal V(P)$ and $\ell(u)=n$
give an arc $P\to[u]$. The corresponding edge of $\mathcal D_n$ would
have equal conflict labels at its endpoints, contradicting properness.
Hence
\begin{equation}\label{eq:fullwords}
 F(g_Px)\in[g_P]F(x)\qquad(P\in\mathcal P_n,\ n\geq1,\ x\in G).
\end{equation}
For the inverse representative, apply this at $g_P^{-1}x$:
\[
 \begin{aligned}
 F(x)=F(g_Pg_P^{-1}x)&\in[g_P]F(g_P^{-1}x),\\
 F(g_P^{-1}x)&\in[g_P]^{-1}F(x)=[g_P^{-1}]F(x).
 \end{aligned}
\]
Every nonidentity element is a representative or its inverse, so
$F(gx)\in[g]F(x)$ for all $g,x\in G$. Taking $g=xy^{-1}$ and base
point $y$ proves \eqref{eq:Xi}. Thus $\Isom(G,d)\leq\Xi(G)$; the
reverse inclusion is Lemma~\ref{lem:unavoidable}.

The metric is bounded and its nonzero distances exceed $1$. Every
Cauchy sequence is therefore eventually constant, proving completeness.
\end{proof}

\begin{remark}
The proof assumes neither a unique word normal form nor compatibility
of an isometry with multiplication. It constrains generator steps and
then uses conflict labels to recover all inverse-pair differences.
\end{remark}

\section{A fixed finite distance set}\label{sec:finite}

For a symmetric subset $\mathcal W\subseteq G$, put
\[
 \Xi_{G,\mathcal W}=\{F\in\Sym(G):F(ux)\in[u]F(x)
                         \text{ for all }u\in \mathcal W,\ x\in G\}.
\]
The right-convention Cayley graph has vertex set $G$ and edges
$\{x,ux\}$, for $u\in \mathcal W\setminus\{e\}$, of color $[u]$.
Thus $\Xi_{G,\mathcal W}$ is its color-preserving automorphism group; see
\cite{HujdurovicEtAl} for related Cayley-graph questions.
For a symmetric generating set $S$, write
$S^{\leq n}=\bigcup_{j=1}^n S^j$. An occurrence of $e$ adds only a
vacuous condition.

We first state the external inputs in the authors' left-translation
convention. Write
\[
 \begin{aligned}
 \mathcal L_g(x)&=gx,\qquad \mathcal L(G)=\{\mathcal L_g:g\in G\},\\
 \Xi^{\mathrm L}_{G,\mathcal W}
 &=\{\varphi\in\Sym(G):\varphi(xu)\in\varphi(x)[u]
                              \ (x\in G,\ u\in \mathcal W)\},\\
 \Xi^{\mathrm L}(G)&=\Xi^{\mathrm L}_{G,G}.
 \end{aligned}
\]
These are the color-preserving permutations of the graph with edges
$\{x,xu\}$ of color $[u]$. In the next theorem,
$J(x)=x^{-1}$; for generalized dicyclic $G=\langle A,t\rangle$,
$\psi$ fixes $A$ and inverts each element of $G\setminus A$.

\begin{theorem}[Leemann--de la Salle, {\cite[Theorem~8]{LeemannSalle}}]\label{thm:ldls-native}
Let $G$ be any group and let $S$ be any symmetric generating set.
The following equalities hold in the left-translation convention,
with the authors' notation adapted as above.
\begin{enumerate}
\item If $G$ is Boolean, then
$\Xi^{\mathrm L}_{G,S}=\Xi^{\mathrm L}(G)=\mathcal L(G)$.
\item If $G$ is abelian but not Boolean, then
$\Xi^{\mathrm L}_{G,S^{\leq2}}=\Xi^{\mathrm L}(G)
=\mathcal L(G)\rtimes\langle J\rangle$.
\item If $G=Q_8\times B_0$ with $B_0$ Boolean, then
$\Xi^{\mathrm L}_{G,S^{\leq3}}=\Xi^{\mathrm L}(G)
=\Xi^{\mathrm L}(Q_8)\times\mathcal L(B_0)$, acting coordinatewise.
\item If $G$ is generalized dicyclic but not of the preceding form, then
$\Xi^{\mathrm L}_{G,S^{\leq3}}=\Xi^{\mathrm L}(G)
=\mathcal L(G)\rtimes\langle\psi\rangle$.
\item In every other case,
$\Xi^{\mathrm L}_{G,S^{\leq3}}=\Xi^{\mathrm L}(G)=\mathcal L(G)$.
\end{enumerate}
\end{theorem}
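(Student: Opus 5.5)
The statement is imported from \cite[Theorem~8]{LeemannSalle}, so the plan is not to reprove it from scratch. Instead I would check that the objects named here coincide with theirs, and record the skeleton of their argument so that the inputs used later are transparent. First I would verify the dictionary. In their paper the colored Cayley graph has edges $\{x,xu\}$ colored by $[u]$, which is exactly the definition of $\Xi^{\mathrm L}_{G,\mathcal W}$ above. Their automorphism $\psi$ in the dicyclic case fixes $A$ pointwise and inverts $G\setminus A$. Their quaternion--Boolean group is $Q_8\times B_0$, acting coordinatewise.

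In each item the inclusion $\Xi^{\mathrm L}(G)\subseteq\Xi^{\mathrm L}_{G,S^{\leq k}}$ is immediate, because $S^{\leq k}\subseteq G$ only weakens the defining condition. The listed groups ($\mathcal L(G)$, $\langle J\rangle$, $\psi$, $\Xi^{\mathrm L}(Q_8)$) lie in $\Xi^{\mathrm L}(G)$ by a direct computation. For instance, $J(xu)=u^{-1}x^{-1}$ lies in $x^{-1}[u]$ when $G$ is abelian, and $\psi$ preserves $[u]$-cosets by \eqref{eq:dic}. The content is therefore the reverse inclusion: every $\varphi\in\Xi^{\mathrm L}_{G,S^{\leq k}}$ lies in the listed group.

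For the reverse inclusion I would normalize $\varphi(e)=e$ by a left translation. The condition at generators then says that at each vertex $x$ and each $s\in S$ there is a local sign $\sigma_x(s)\in\{\pm1\}$ with $\varphi(xs)=\varphi(x)s^{\sigma_x(s)}$. In the Boolean case every $[s]$ is a singleton, so induction on word length gives $\varphi(x)=x$. In the abelian case I would compare the two paths $x\to x+s\to x+s+t$ and $x\to x+t\to x+s+t$ against the $S^{2}$ constraint at $x$ for $s+t$ and $s-t$. This forces $\sigma_x$ to be constant in $s$ unless $2s=0$ or $2t=0$, and constant along edges. Hence the sign is globally $\pm1$, which is the abelian case of Lemma~\ref{lem:affine} in local form.

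The main obstacle is the nonabelian case. There the square test must be replaced by the length-three relations in $S^{\leq3}$, which compare $\varphi(xst)$ with $\varphi(x)s^{\pm}t^{\pm}$ and $\varphi(x)(st)^{\pm}$. A sign reversal then propagates coherently only when $s$ and $t$ conjugate each other to inverses. Extracting from this the dichotomy (all signs $+1$, or the structure \eqref{eq:dic}, or $Q_8\times B_0$) is the delicate group-theoretic core of \cite{LeemannSalle}. I would cite it rather than reproduce it, and the paper subsequently converts it to the right-translation convention via $J$.
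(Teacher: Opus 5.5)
Your plan matches the paper, which likewise does not reprove this theorem but imports it from \cite[Theorem~8]{LeemannSalle}. Like you, it checks only the dictionary: the edge convention $\{x,xu\}$, the automorphism $\psi$, and the fact that the cited result imposes no finite-generation hypothesis. That last point is worth stating explicitly, since $G$ is arbitrary here.

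Your direct sketches of the Boolean and abelian cases are sound. They are essentially the paper's own Lemma~\ref{lem:abelian-short}, which the paper writes in the right-translation convention.
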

This is Theorem~3.2 in arXiv:2105.02326v1. The hypothesis does not require
finite generation. The identity stabilizer of $\Xi^{\mathrm L}(Q_8)$
consists of the eight independent sign choices on $i,j,k$, fixing
$1,-1$; hence it has order eight. See
\cite[Theorem~2 and its preceding discussion; Lemma~4]{LeemannSalle}.

The earlier orientation-rigidity theorem states the corresponding
nonabelian input. In the following statement,
a pair $(G,\mathcal W)$ is \emph{orientation-rigid} when
$\Xi^{\mathrm L}_{G,\mathcal W}=\mathcal L(G)$, and $G$ is orientation-rigid when
$(G,G)$ is. For a generating set $S$ not assumed symmetric, set
$S^{\pm}=S\cup S^{-1}$ and
$S^{\leq n}=\bigcup_{j=1}^n(S^{\pm})^j\setminus\{e\}$.
We state the equivalences concerning full Cayley graphs; the cited
theorem also gives an equivalent local formulation, which is not used
here.

\begin{theorem}[Leemann--de la Salle, {\cite[Theorem~7]{LeemannSalleFew}}, full-graph form]\label{thm:orientation-native}
For any group $G$, the following are equivalent:
\begin{enumerate}
\item $G$ is neither generalized dicyclic nor abelian with an element
of order greater than two;
\item $G$ is orientation-rigid;
\item for every generating set $S$, the pair $(G,S^{\leq3})$ is
orientation-rigid.
\end{enumerate}
In the third condition, ``for every'' can equivalently be replaced by
``for some.'' No finite-generation hypothesis is imposed.
\end{theorem}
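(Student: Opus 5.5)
The plan is to prove the cycle $(3,\text{some})\Rightarrow(2)\Rightarrow(1)\Rightarrow(3,\text{every})$. The "for some" and "for every" versions of the third condition then coincide, since "every" trivially implies "some" once generating sets exist (take $S=G$). The last implication will come from Theorem~\ref{thm:ldls-native}. The first two are direct checks.

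\emph{From (3) to (2).} Suppose $(G,S^{\leq3})$ is orientation-rigid for some $S$. Every $\varphi\in\Xi^{\mathrm L}(G)$ satisfies the defining condition for all $u\in G$, and in particular for all $u\in S^{\leq3}$. Hence
\[
 \mathcal L(G)\leq\Xi^{\mathrm L}(G)\leq\Xi^{\mathrm L}_{G,S^{\leq3}}=\mathcal L(G).
\]

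\emph{From (2) to (1).} We show the contrapositive by exhibiting a nontrivial element of $\Xi^{\mathrm L}(G)$ that fixes $e$. Such an element is not a left translation.
\begin{itemize}
\item Let $G$ be abelian with $2u\neq0$ for some $u$. Inversion $J$ satisfies $J(xu)=x^{-1}u^{-1}\in J(x)[u]$, and $J\neq\id$.
\item Let $G=\langle A,t\rangle$ be generalized dicyclic. Every $u\notin A$ has the form $u=at$, so $u^2=aa^{-1}t^2=z$, $u^{-1}=uz$, and $ub=b^{-1}u$ for $b\in A$. We verify $\psi(xu)\in\psi(x)[u]$ in four cases.
\begin{itemize}
\item $x,u\in A$: here $\psi(xu)=xu$, trivially.
\item $x\in A$, $u\notin A$: then $\psi(xu)=u^{-1}x^{-1}=xu^{-1}$.
\item $x\notin A$, $u\in A$: then $\psi(xu)=u^{-1}x^{-1}=x^{-1}u$.
\item $x,u\notin A$: here $xu\in A$, and $xu=(xz)(uz)=x^{-1}u^{-1}$.
\end{itemize}
Since $\psi$ fixes $e$ and inverts $t\neq t^{-1}$, it is not the identity.
\end{itemize}

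\emph{From (1) to (3) for every $S$.} Given a generating set $S$, apply Theorem~\ref{thm:ldls-native} to the symmetric generating set $S^{\pm}$. Removing $e$ from $S^{\leq3}$ changes nothing, because $u=e$ imposes only a vacuous condition. Under (1), $G$ is neither abelian non-Boolean nor generalized dicyclic. This also excludes case (3) of that theorem, because $Q_8\times B_0$ is generalized dicyclic: take $A=\langle i\rangle\times B_0$ and $t=j$. Two cases remain.
\begin{itemize}
\item If $G$ is Boolean, case (1) of Theorem~\ref{thm:ldls-native} gives
\[
 \mathcal L(G)\leq\Xi^{\mathrm L}_{G,S^{\leq3}}\leq\Xi^{\mathrm L}_{G,S^{\pm}}=\mathcal L(G),
\]
using $S^{\pm}\subseteq S^{\leq3}$.
\item Otherwise case (5) applies directly and gives $\Xi^{\mathrm L}_{G,S^{\leq3}}=\mathcal L(G)$.
\end{itemize}

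The main obstacle is not conceptual, since the hard content is imported from Theorem~\ref{thm:ldls-native}. The care goes into two points. First, the four-case verification for $\psi$ must be done correctly, because it relies on $z$ being central and on $u^{-1}=uz$ for $u\notin A$. Second, the conventions must match: $S^{\leq n}$ is built from the possibly nonsymmetric $S$ via $S^{\pm}$, with $e$ removed.
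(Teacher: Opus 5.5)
Your proposal is correct and follows the paper's proof: the containment $\mathcal L(G)\le\Xi^{\mathrm L}(G)\le\Xi^{\mathrm L}_{G,S^{\le3}}$ gives (3)$\Rightarrow$(2), and Theorem~\ref{thm:ldls-native} applied to $S^{\pm}$ gives (1)$\Rightarrow$(3) for every $S$. The differences are minor. You prove (2)$\Rightarrow$(1) directly, by exhibiting $J$ and $\psi$ as nontrivial elements of $\Xi^{\mathrm L}(G)$ that fix $e$, whereas the paper reads this direction off the cited classification. You also make explicit two points the paper leaves implicit: the monotonicity step needed in the Boolean case, where the cited theorem is stated for $S$ rather than $S^{\le3}$, and the fact that $Q_8\times B_0$ is itself generalized dicyclic.
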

The author version, arXiv:1812.02199v2, numbers this result
Theorem~6. Its full-graph form can also be deduced directly from the
published short-word theorem just stated, as follows.
\begin{proof}
Theorem~\ref{thm:ldls-native} identifies the groups with
$\Xi^{\mathrm L}(G)=\mathcal L(G)$ as precisely those in the first
condition. For these groups, apply the same theorem to $S^{\pm}$ to
obtain the third condition for every generating set $S$. Conversely,
$\mathcal L(G)\leq\Xi^{\mathrm L}(G)\leq
\Xi^{\mathrm L}_{G,S^{\leq3}}$, so the third condition for even one
$S$ implies the second. This proves all the stated equivalences.
\end{proof}

\begin{lemma}[Conversion of translation conventions]\label{lem:conventions}
Let $J(x)=x^{-1}$ and let $\mathcal W=\mathcal W^{-1}\subseteq G$. Then
\[
 J\Xi_{G,\mathcal W}J=\Xi^{\mathrm L}_{G,\mathcal W},\qquad
 J\Xi(G)J=\Xi^{\mathrm L}(G),\qquad
 JR_gJ=\mathcal L_{g^{-1}}.
\]
\end{lemma}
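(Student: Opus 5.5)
The plan is a direct verification: conjugate by $J$ and check each defining condition pointwise. Since $J$ is an involution, conjugation by $J$ is a bijection of $\Sym(G)$ onto itself. It therefore suffices to show one inclusion $J\Xi_{G,\mathcal W}J\subseteq\Xi^{\mathrm L}_{G,\mathcal W}$ together with the symmetric inclusion $J\Xi^{\mathrm L}_{G,\mathcal W}J\subseteq\Xi_{G,\mathcal W}$. Applying $J$-conjugation to the second inclusion gives the reverse of the first, so equality follows.

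For the first inclusion, take $F\in\Xi_{G,\mathcal W}$ and put $\varphi=JFJ$, so $\varphi(y)=F(y^{-1})^{-1}$. For $x\in G$ and $u\in\mathcal W$ we compute
\[
 \varphi(xu)=F(u^{-1}x^{-1})^{-1}.
\]
Because $\mathcal W$ is symmetric, $u^{-1}\in\mathcal W$, and the defining property of $F$ at base point $x^{-1}$ gives $F(u^{-1}x^{-1})\in[u^{-1}]F(x^{-1})=[u]F(x^{-1})$. Inverting, and using $[u]^{-1}=[u]$, we get
\[
 \varphi(xu)\in F(x^{-1})^{-1}[u]=\varphi(x)[u].
\]
The reverse inclusion is the mirror computation with the roles of left and right exchanged. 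The symmetry hypothesis on $\mathcal W$ is exactly what allows replacing $u$ by $u^{-1}$ in this step; it is the only point needing care.

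The second identity is the special case $\mathcal W=G$. By definition $\Xi^{\mathrm L}(G)=\Xi^{\mathrm L}_{G,G}$. We also check that $\Xi(G)=\Xi_{G,G}$: from the Xi definition, put $g=xy^{-1}$ and rewrite the condition as $F(gy)\in[g]F(y)$ for all $g,y$. Here $[F(x)F(y)^{-1}]=[xy^{-1}]$ is equivalent to $F(x)\in[xy^{-1}]F(y)$, and the translation works in both directions. The third identity is immediate:
\[
 JR_gJ(x)=(x^{-1}g)^{-1}=g^{-1}x=\mathcal L_{g^{-1}}(x).
\]
There is no real obstacle. The only subtle point is keeping track of the side on which $[u]$ multiplies after inversion, and checking that the singleton case $[u]=\{u\}$ needs no separate treatment.
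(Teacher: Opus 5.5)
Your proof is correct and follows the paper's argument. It uses the same pointwise computation $(JFJ)(xu)=F(u^{-1}x^{-1})^{-1}\in F(x^{-1})^{-1}[u]$, relying on the symmetry of $\mathcal W$, obtains the converse from $J^2=\id$, specializes to $\mathcal W=G$, and evaluates $JR_gJ$ directly. Your explicit check that $\Xi(G)=\Xi_{G,G}$ also supplies a step the paper leaves implicit when it says ``taking $\mathcal W=G$''.
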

\begin{proof}
For $F\in\Xi_{G,\mathcal W}$, $u\in \mathcal W$ and $y\in G$, symmetry of $\mathcal W$ gives
\[
 (JFJ)(yu)=F(u^{-1}y^{-1})^{-1}
          \in F(y^{-1})^{-1}[u]=(JFJ)(y)[u].
\]
The converse is the same calculation since $J^2=\id$. Taking $\mathcal W=G$
gives the full-color assertion, and
$JR_gJ(y)=(y^{-1}g)^{-1}=g^{-1}y$ gives the last equality.
\end{proof}

\begin{theorem}[Short-word rigidity in the right convention]\label{thm:short}
For every group $G$ and every symmetric generating set $S$ (again
arbitrary, not the fixed $\mathcal S$ of Section~\ref{sec:ranks}),
\[
 \Xi_{G,S^{\leq3}}=\Xi(G).
\]
For abelian $G$, $S^{\leq2}$ suffices; for Boolean $G$, $S$ suffices.
If $G$ is nonabelian and not generalized dicyclic, then
$\Xi_{G,S^{\leq3}}=R(G)$.
\end{theorem}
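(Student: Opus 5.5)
The plan is to transport the left-convention statement of Theorem~\ref{thm:ldls-native} to the right convention by conjugating with $J$, using Lemma~\ref{lem:conventions}. Since $S$ is symmetric, each $S^{j}$ is symmetric: $(s_1\cdots s_j)^{-1}=s_j^{-1}\cdots s_1^{-1}\in S^j$. Hence $S^{\leq n}$ is symmetric, and Lemma~\ref{lem:conventions} applies with $\mathcal W=S^{\leq n}$. This gives
\[
 \Xi_{G,S^{\leq n}}=J\,\Xi^{\mathrm L}_{G,S^{\leq n}}\,J,\qquad \Xi(G)=J\,\Xi^{\mathrm L}(G)\,J .
\]
If $e$ occurs in $S^{\leq n}$, it only adds a vacuous condition, so the two conventions for $S^{\leq n}$ (with or without $e$) agree.

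For the main equality, I would run through the five cases of Theorem~\ref{thm:ldls-native}. Every group falls into exactly one of the cases: Boolean; abelian non-Boolean; quaternion--Boolean; other generalized dicyclic; everything else. In each case, that theorem asserts $\Xi^{\mathrm L}_{G,S^{\leq n}}=\Xi^{\mathrm L}(G)$, with $n=1$ in the Boolean case, $n=2$ in the abelian case, and $n=3$ otherwise. Conjugating by $J$ gives $\Xi_{G,S^{\leq n}}=\Xi(G)$. For larger $n$, I would use the sandwich
\[
 \Xi(G)\leq\Xi_{G,S^{\leq3}}\leq\Xi_{G,S^{\leq n}}\qquad(n\leq 3).
\]
The first inclusion holds because $\Xi(G)=\Xi_{G,G}$ imposes more conditions. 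The second holds because $S^{\leq n}\subseteq S^{\leq 3}$. Together these give $\Xi_{G,S^{\leq3}}=\Xi(G)$ for every $G$, and the sharper abelian and Boolean statements come directly from the first two cases.

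For the final assertion, take $G$ nonabelian and not generalized dicyclic. Then $G$ is not Boolean, not abelian, and not quaternion--Boolean, since $Q_8\times B_0$ is generalized dicyclic with $A=\langle i\rangle\times B_0$ and $t=j$. So case (5) applies, and $\Xi^{\mathrm L}_{G,S^{\leq3}}=\mathcal L(G)$. Conjugating by $J$ and using $JR_gJ=\mathcal L_{g^{-1}}$ gives $J\mathcal L(G)J=R(G)$, hence $\Xi_{G,S^{\leq3}}=R(G)$. Alternatively, Theorem~\ref{thm:orientation-native} with $S$ itself gives the same conclusion.

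The main obstacle is bookkeeping rather than mathematics. I expect two places to need care. First, the definitions of $S^{\leq n}$ differ slightly between Theorem~\ref{thm:ldls-native} and Theorem~\ref{thm:orientation-native}, one including $e$ and one excluding it; I would check that this difference only adds or removes vacuous conditions. Second, the quaternion--Boolean family must genuinely sit inside the generalized dicyclic class, so that the final claim uses only case (5). The rest is a formal consequence of Lemma~\ref{lem:conventions}.
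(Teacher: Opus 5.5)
Your proposal is correct and follows the paper's proof: you transport Theorem~\ref{thm:ldls-native} to the right convention by conjugating with $J$ via Lemma~\ref{lem:conventions}, and you note Theorem~\ref{thm:orientation-native} as an alternative for the nonabelian assertion. Your explicit checks fill in bookkeeping the paper leaves implicit: that $S^{\leq n}$ is symmetric, the sandwich argument for the abelian and Boolean cases, and that $Q_8\times B_0$ is generalized dicyclic. The paper also remarks that the abelian and Boolean cases are proved independently in Lemma~\ref{lem:abelian-short}.
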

\begin{proof}
Apply Lemma~\ref{lem:conventions} to Theorem~\ref{thm:ldls-native}.
For the last assertion, Theorem~\ref{thm:orientation-native} is also
sufficient. The abelian and Boolean cases will be proved independently
in Lemma~\ref{lem:abelian-short}.
\end{proof}

We now return to the fixed generating set $\mathcal S$ of
Section~\ref{sec:ranks} and its word length $\ell$.
Use $\chi_2$ and $\chi_3$ from~\eqref{eq:qn}, with at most $7$ and
$15$ conflict labels respectively. Replace~\eqref{eq:countcolor} by
\begin{equation}\label{eq:finitecolor}
 C_*(g)=
 \begin{cases}
 (1,\pi(g)),&\ell(g)=1,\\
 (2,\pi(g),\chi_2([g])),&\ell(g)=2,\\
 (3,\chi_3([g])),&\ell(g)=3,\\
 (3,1),&\ell(g)>3.
 \end{cases}
\end{equation}
The first tag recognizes $\mathcal S$. On differences of length at most two,
the second coordinate records exactly the rank label required by
Lemma~\ref{lem:pairs}. The rank label is not needed on longer
differences. If elements of length greater than three exist, a shortest word for one
has a three-letter geodesic prefix, so $\mathcal P_3\ne\varnothing$.
Our normalization in \eqref{eq:qn} then ensures that $1$ occurs as a
conflict label. The tail can use the length-three color $(3,1)$ because
the proof only compares candidate words of length at most three.
Thus the number of nonzero colors
is at most
\begin{equation}\label{eq:colorcount}
 2+2q_2+q_3=2+14+15=31.
\end{equation}
Assign these colors injectively to
$\{1+j/62:1\le j\le31\}$ and apply Lemma~\ref{lem:encoding}.

\begin{proof}[Proof of Theorem~\ref{thm:finite}]
Let $G\ne\{e\}$ and let $F$ be an isometry of this metric. On colors
whose first tag is $1$ or $2$, let $\varrho$ read the second coordinate
of \eqref{eq:finitecolor}; on colors with first tag $3$, set
$\varrho=0$. These choices meet the hypotheses of Lemma~\ref{lem:pairs}.
As in the countable proof, $F_x(z)=F(zx)F(x)^{-1}$ fixes $e$ and is
color-preserving. Lemma~\ref{lem:pairs} gives \eqref{eq:steps}. For a representative
$g_P$ of word length $n\le3$, iteration gives
$u=F(g_Px)F(x)^{-1}\in \mathcal V(P)$, so $1\le\ell(u)\le n$.

For $n=1$, \eqref{eq:steps} gives the required inverse pair.
For $n=2$, equality of the colors in \eqref{eq:finitecolor} gives
$\ell(u)=2$ and $\chi_2([u])=\chi_2(P)$.
For $n=3$, the element $u$ is a product of three letters of $\mathcal S$,
so $\ell(u)\leq3$. Its color has first tag $3$, whereas elements of
length one or two have first tag $1$ or $2$. Therefore
$\ell(u)=3$ and $\chi_3([u])=\chi_3(P)$. Merging the tail with a
length-three color causes no ambiguity in this comparison.
For $n=2,3$, if $[u]\ne P$, these facts give an arc $P\to[u]$ with
equal conflict labels at its endpoints, contrary to properness.
Hence $[u]=P$.
Applying the argument to the inverse representative as before gives
$F\in\Xi_{G,\mathcal S^{\le3}}$. Theorem~\ref{thm:short} implies
$F\in\Xi(G)$. The reverse inclusion follows from
Lemma~\ref{lem:unavoidable}. Equation~\eqref{eq:colorcount}, with zero
added, proves the bound of $32$. Boundedness, uniform discreteness, and
completeness follow from the common positive lower bound on distances.
\end{proof}

The countable construction uses no short-word theorem. The finite
construction uses the equality in Theorem~\ref{thm:short}, but its metric
encoding does not otherwise use the algebraic description of $\Xi(G)$.

\begin{lemma}[Direct abelian two-letter rigidity]\label{lem:abelian-short}
Let $A$ be an abelian group and let $S$ be a symmetric generating set.
Then $\Xi_{A,S^{\leq2}}=\Xi(A)=\Affpm(A)$.
If $A$ is Boolean, then $\Xi_{A,S}=T(A)$.
\end{lemma}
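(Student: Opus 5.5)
Since $\Affpm(A)=\Xi(A)\leq\Xi_{A,S^{\leq2}}$ by Lemma~\ref{lem:affine} and the definitions, only the inclusion $\Xi_{A,S^{\leq2}}\subseteq\Affpm(A)$ needs proof. Let $F\in\Xi_{A,S^{\leq2}}$. Translations preserve the defining condition, so after composing with one we may assume $F(0)=0$. The plan is to show that there is a sign $\sigma\in\{1,-1\}$ with $F(x)=\sigma x$ for all $x$. We proceed by induction along words in $S$, and prove simultaneously that $F$ is additive in the sense that $F(s+x)=\sigma s+F(x)$.

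The first step is local. At each base point $x$, define $\sigma_x(s)\in\{\pm1\}$ by $F(s+x)=F(x)+\sigma_x(s)s$; the sign is ambiguous exactly when $2s=0$. We then use the two-letter condition. For $s,t\in S$ we have $s+t\in S^{\leq2}$, so
\[
F(s+t+x)-F(x)\in\{s+t,-s-t\}.
\]
Going through $t+x$ gives $F(s+t+x)-F(x)=\sigma_{t+x}(s)s+\sigma_x(t)t$. The constraint is then $\sigma_{t+x}(s)s+\sigma_x(t)t=\pm(s+t)$. When the signs are mixed, this forces $2s=0$, $2t=0$, or $2(s+t)=0$ after rearranging: for example $s-t=s+t$ gives $2t=0$, and $s-t=-s-t$ gives $2s=0$. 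In the Boolean case there is nothing to prove beyond $F(s+x)=F(x)+s$. Every sign is trivial, so induction on word length gives $F(x)=x$, which is the one-letter claim $\Xi_{A,S}=T(A)$.

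For non-Boolean $A$, I would show that the sign is globally constant on the letters with $2s\neq0$, and is independent of the base point. The tool is the relation above, used with $t=s$ and $t=-s$ (note $-s\in S$) and with pairs of distinct letters. Take $t=s$ with $2s\ne0$. The equation $\sigma_{s+x}(s)s+\sigma_x(s)s=\pm2s$ forces $\sigma_{s+x}(s)=\sigma_x(s)$ unless $4s=0$ and $2s\neq 0$. So the sign propagates along $s$-steps, and the delicate case is $s$ of order $4$. For two letters $s,t$ with $2s,2t\neq0$, equal signs are forced unless $2(s+t)=0$, that is, unless $t=-s$ plus a $2$-torsion element. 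These are exactly the situations in which $\Xi_{A,S}$ genuinely exceeds $\Affpm(A)$, so the extra $S^2$ letters must be exploited. The condition applied with $u=s+t$ from a base point differing by one step, and also with $u=s-t$, should rule them out: comparing the two paths $x\to s+x\to s+t+x$ and $x\to t+x\to s+t+x$ pins down both signs. This order-$4$ / $2$-torsion bookkeeping is the step I expect to be the main obstacle. I would first try to handle it by case analysis on whether $2s$, $2t$, $2(s+t)$ and $2(s-t)$ vanish.

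Once we know that $F(s+x)=F(x)+\sigma s$ for every letter $s$ and every base point $x$, with one global $\sigma$ (letters with $2s=0$ trivially obey either sign), induction on word length in $S$ gives $F(x)=\sigma x$. Hence $F\in\Affpm(A)$.
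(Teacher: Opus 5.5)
Your reduction to $F(0)=0$, the Boolean case, and the closing telescoping are fine. The core of the non-Boolean case, however, is never established: you do not show that the local sign is independent of the letter and of the base point. You defer this to a case analysis you have not carried out, and the exceptions you list come from an arithmetic slip. In your relation $\sigma_{t+x}(s)s+\sigma_x(t)t\in\{s+t,-s-t\}$, a mixed value $\pm(s-t)$ equals $\pm(s+t)$ only if $2s=0$ or $2t=0$; nothing depends on $2(s+t)$. For $t=s$, mixed signs give $0\in\{\pm2s\}$, which is impossible whenever $2s\ne0$, whether or not $4s=0$. So for all $s,t\in S$ with $2s,2t\ne0$ you get $\sigma_{t+x}(s)=\sigma_x(t)$ with no exceptions. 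Fixing $t$ and varying $s$ shows that at each base point $y$ all such letters share one sign $\sigma(y)$, and then $\sigma(y+t)=\sigma(y)$. Your diagnosis of why $\Xi_{A,S}$ exceeds $\Affpm(A)$ is also off: torsion coincidences are not the cause, since $(m,n)\mapsto(-m,n)$ lies in $\Xi_{\Z^2,S}$ for $S=\{\pm e_1,\pm e_2\}$. The real reason is that one-letter data never compare the signs of different letters, and that comparison is exactly what $S^2$ supplies.

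What is genuinely missing is propagation of $\sigma$ across letters $b\in S$ with $2b=0$ when $A$ is not Boolean. Saying such letters ``obey either sign'' concerns only the step $F(x+b)=F(x)+b$. It says nothing about how $\sigma(x+b)$ relates to $\sigma(x)$. Your two-letter relation is also vacuous when one letter has order two, since then $\pm(s+b)=\pm s+b$. This matters whenever the letters of order greater than two fail to generate $A$, for instance $A=\Cyc{4}\times\Cyc{2}$ with $S=\{\pm a,b\}$, $a=(1,0)$, $b=(0,1)$.

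The path comparison you allude to is the right tool, but it must be applied here. Computing $F(x+a+b)$ through $x+a$ and through $x+b$ gives $F(x)+\sigma(x)a+b=F(x)+b+\sigma(x+b)a$, hence $\sigma(x+b)=\sigma(x)$ because $2a\ne0$.

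The paper avoids the case split altogether. It fixes one anchor $a\in S$ with $2a\ne0$ and normalizes each $F_x$ to fix $a$. It then uses $s-a,\,a+s\in S^{\le2}$ to obtain $F(x+s)=F(x)+\sigma(x)s$ and $F(x+s+a)=F(x)+\sigma(x)(s+a)$ uniformly for every $s\in S$, including order-two letters. Comparing with the path through $x+s$ then gives $\sigma(x+s)=\sigma(x)$.
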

\begin{proof}
In the Boolean case, preservation of generator inverse pairs says
$F(x+s)=F(x)+s$ for all $s\in S$ and $x\in A$. Generation gives
$F(x)=F(0)+x$.

Otherwise, $S$ contains an element $a$ with $2a\ne0$. First consider
$f\in\Xi_{A,S^{\leq2}}$ with $f(0)=0$ and $f(a)=a$. For $s\in S$,
the differences from $0$ and from $a$ give
\[
 \begin{aligned}
 f(s)&\in\{s,-s\}\cap\{s,2a-s\}=\{s\},\\
 f(a+s)&\in\{a+s,-a-s\}\cap\{a+s,a-s\}=\{a+s\}.
 \end{aligned}
\]
Any extra element in either intersection, distinct from its displayed
value, would imply $2a=0$. Zero differences impose only a vacuous
condition and cause no exception.

For arbitrary $F\in\Xi_{A,S^{\leq2}}$ and $x\in A$, set
$F_x(y)=F(x+y)-F(x)$. There is a unique sign $\sigma(x)\in\{1,-1\}$
with $F_x(a)=\sigma(x)a$. The map $\sigma(x)F_x$ satisfies the preceding
normalization, so
\[
 \begin{aligned}
 F(x+s)&=F(x)+\sigma(x)s,\\
 F(x+s+a)&=F(x)+\sigma(x)(s+a).
 \end{aligned}
\]
Applying the first identity at $x+s$ with step $a$ and comparing with
the second gives
$(\sigma(x+s)-\sigma(x))a=0$. Since $2a\ne0$, the signs agree.
Generation makes $\sigma$ constant, and telescoping gives
$F(x)=F(0)+\sigma x$. Conversely, all such affine sign maps preserve
inverse pairs, by Lemma~\ref{lem:affine}.
\end{proof}

\begin{proposition}[Smaller bounds for abelian groups]\label{prop:smallbounds}
Every abelian group has an invariant metric realizing $\Xi(G)$ with
at most $17$ values. Every Boolean group has such a metric with at most
$5$ values. All these counts include zero.
\end{proposition}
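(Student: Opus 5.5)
Truncate the finite construction \eqref{eq:finitecolor} at word length two, using Lemma~\ref{lem:abelian-short} in place of Theorem~\ref{thm:short}. For abelian $G$ (written additively), keep the fixed $\mathcal S$, ranks $\pi$, and conflict labels $\chi_2$ with $q_2\le 7$, and define
\[
 C_{ab}(g)=
 \begin{cases}
 (1,\pi(g)),&\ell(g)=1,\\
 (2,\pi(g),\chi_2([g])),&\ell(g)=2,\\
 (2,0,1),&\ell(g)>2.
 \end{cases}
\]
This gives at most $2+2\cdot7=16$ nonzero colors; with zero, $17$ values. Encode injectively into $[1,2]$ via Lemma~\ref{lem:encoding}.

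We must check that merging the tail into a length-two color does not break Lemma~\ref{lem:pairs} or the length-two step. Lemma~\ref{lem:pairs} needs two things. First, recognition of $\mathcal S$: this holds because tag $1$ occurs only on length one. Second, $\varrho(C(g))=\pi(g)$ on $(\mathcal S\cup\mathcal S^2)\setminus\{e\}$: every such $g$ has length $1$ or $2$, and there the second coordinate is the true $\pi$. The tail color only needs to avoid conflicting with an actual length-two element, and the lemma imposes no requirement off $\mathcal S\cup\mathcal S^2$. One subtlety: if a tail element shares the color $(2,0,1)$ with a length-two element of rank label $0$, then $\varrho$ is still single-valued on colors, so this is fine. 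If no length-two element has that label combination, the color is simply new; in either case it is inside the count of $16$.

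Now let $F$ be an isometry. As in the proof of Theorem~\ref{thm:finite}, $F_x$ is color-preserving and fixes $e$, so \eqref{eq:steps} holds. For $P\in\mathcal P_2$ we get $u=F(g_Px)F(x)^{-1}\in\mathcal V(P)$, so $\ell(u)\le2$. Its color has tag $2$, hence $\ell(u)=2$, because tail elements cannot occur in $\mathcal V(P)$. Then $\chi_2([u])=\chi_2(P)$, and properness of $\chi_2$ forces $[u]=P$. Handling inverse representatives as before gives $F\in\Xi_{G,\mathcal S^{\le2}}$, and Lemma~\ref{lem:abelian-short} yields $F\in\Xi(G)$. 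The reverse inclusion is Lemma~\ref{lem:unavoidable}.

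For Boolean $G$, use only length one. Set $C(g)=(1,\pi(g))$ on $\mathcal S$. For $\ell(g)=2$ set $C(g)=(2,\pi(g))$, so that $\varrho$ still reads $\pi$ on $\mathcal S^2$. Give a single color $(3)$ to all longer elements. This is at most $4$ nonzero colors, $5$ with zero. Lemma~\ref{lem:pairs} gives $F(s+x)\in[s]+F(x)=\{s+F(x)\}$, so $F\in\Xi_{G,\mathcal S}$, and Lemma~\ref{lem:abelian-short} gives $F\in T(G)=\Xi(G)$.

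The main point to verify carefully is the $\varrho$ hypothesis after merging: no color may be forced to decode to two different $\pi$ values on $\mathcal S\cup\mathcal S^2$. Keeping $\pi$ as an explicit coordinate at lengths $1$ and $2$, and placing all longer elements in separate or harmless colors, ensures this.
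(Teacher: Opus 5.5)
Your abelian argument is correct and is exactly the paper's: the same truncation of \eqref{eq:finitecolor}, the tail merged into $(2,0,1)$, the observation that $\mathcal V(P)\subseteq\mathcal S^2$ forces $\ell(u)=2$, and the appeal to Lemma~\ref{lem:abelian-short}.

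The Boolean case, however, does not prove the stated bound, because the count is wrong.

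\begin{itemize}
\item Your palette is $(1,0),(1,1),(2,0),(2,1),(3)$. That is five nonzero colors, hence six values with zero, not five.
\item This is a real overcount, not slack. For $\Cyc{2}^3$, the construction of Section~\ref{sec:ranks} retains a basis $x_0,x_1,x_2$ with labels $0,1,0$. The length-two elements $x_0+x_1$ and $x_0+x_2$ carry labels $1$ and $0$. The element $x_0+x_1+x_2$ has length three. So all five colors occur, and your metric on $\Cyc{2}^3$ has six values.
\item By Proposition~\ref{prop:sharp}, five is exactly the minimum for $\Cyc{2}^3$, so the extra color cannot be absorbed.
\end{itemize}

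The fix is the one you already justified in the abelian case, and it is what the paper does. Give every element of length greater than two an existing length-two color, say $(2,0)$. This is harmless for the following reasons.
\begin{itemize}
\item The Boolean argument uses only Lemma~\ref{lem:pairs}.
\item That lemma's proof reads colors only on $(\mathcal S\cup\mathcal S^2)\setminus\{e\}$ and on their images, which stay there because $f(\mathcal S)=\mathcal S$.
\item Tag $1$ still recognizes $\mathcal S$.
\item Letting $\varrho$ read the second coordinate remains correct on all length-two elements.
\end{itemize}
This leaves four nonzero colors, and the rest of your Boolean argument goes through unchanged.

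A smaller point: your phrase ``use only length one'' is inaccurate. The rank labels on length-two differences are indispensable for Lemma~\ref{lem:pairs}, which is why they must appear in the palette.
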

\begin{proof}
For an abelian group, use the first two lines of~\eqref{eq:finitecolor}
and assign the color $(2,0,1)$ to every element of length greater than
two. If such elements exist, $\mathcal P_2\ne\varnothing$, and $1$
is in the range of $\chi_2$ by \eqref{eq:qn}. The number of nonzero
colors is at most
\[
 2+2q_2=2+2\cdot7=2+14=16.
\]
The generator-pair
lemma is unchanged. A sign variation of a two-letter word has length
at most two, so its color forces the correct length and conflict label.
It follows that every isometry lies in $\Xi_{G,\mathcal S^{\le2}}=\Xi(G)$,
by Lemma~\ref{lem:abelian-short}, independently of the external
short-word theorem.

For a Boolean group, use $(1,\pi(g))$ at length one, $(2,\pi(g))$ at length
two, and $(2,0)$ thereafter. The possible nonzero colors are
\[
 (1,0),\quad(1,1),\quad(2,0),\quad(2,1).
\]
Lemma~\ref{lem:pairs} fixes every generator, since each inverse pair
is a singleton. Normalization at each base point then gives
$F(sx)=sF(x)$; generation shows that $F$ is a right translation.
This proves the Boolean bound without a short-word theorem.
\end{proof}

The universal and abelian bounds are not optimality statements. The Boolean
bound is optimal as a uniform bound. Even in the Boolean case,
recognition of the individual generators uses the rank labels of their
pairwise differences, which can have word length two, and the next
proposition shows that this cost is unavoidable for $\Cyc{2}^3$.

\begin{proposition}[Sharpness of the Boolean bound]\label{prop:sharp}
Every right-invariant metric $d$ on $\Cyc{2}^3$ with
$\Isom(\Cyc{2}^3,d)=R(\Cyc{2}^3)$ takes at least five values, including zero.
Consequently $5$ is the least bound valid for all Boolean groups in
Proposition~\ref{prop:smallbounds}, and the least constant admissible in
Theorem~\ref{thm:finite} lies between $5$ and $32$.
\end{proposition}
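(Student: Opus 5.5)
The plan is to show the contrapositive: if $L(g)=d(g,0)$ takes at most three nonzero values on the seven nonzero elements of $V=\Cyc{2}^3=\F_2^3$, then some isometry of $d$ that fixes $0$ is not the identity. Such an isometry is not a translation, so $\Isom(V,d)\ne R(V)$. Since $d(x,y)=L(x+y)$, every $g\in GL(3,2)$ with $L\circ g=L$ is such an isometry: $L(g(x)+g(y))=L(g(x+y))=L(x+y)$. So it suffices to prove that every map from the seven points of the Fano plane to a three-element set is invariant under a nonidentity collineation.

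I will look first for an invariant transvection. Fix a point $c$ and one of the three lines through $c$ as axis. The corresponding transvection fixes the axis pointwise and swaps $x\leftrightarrow x+c$ on each of the other two lines through $c$. It therefore preserves $L$ as soon as two of the three lines through $c$ have their two non-$c$ points equally colored. Call a pair $\{x,y\}$ of distinct points with $L(x)=L(y)$ \emph{monochromatic}, and assign to it the point $x+y$. Two distinct monochromatic pairs with the same assigned point automatically lie on two different lines through that point, so they give an invariant transvection. Since only seven points can be assigned, eight monochromatic pairs suffice by pigeonhole.

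Next comes a count over the class sizes $(a,b,c)$ with $a+b+c=7$. The number of monochromatic pairs is $\binom a2+\binom b2+\binom c2$. This is at least $9$ except for the size patterns $(2,2,3)$, $(1,3,3)$ and $(1,2,4)$, which give $5$, $6$ and $7$ pairs. In each of these three cases I will split further according to the configuration:
\begin{enumerate}
\item whether each $3$-point class is a line or a triangle;
\item whether the singleton lies on the lines determined by the other classes;
\item in the size-$4$ case, whether the $4$-point class is the complement of a line, i.e.\ an affine plane.
\end{enumerate}
In each subcase I will either exhibit a repeated assigned point or write down a different nonidentity collineation preserving the color classes. The fallback candidates are an element of order $3$ permuting three equally colored points cyclically, or a transvection whose axis is a $3$-point class. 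The abstract monochromatic-pair count is useful here: for example, a $4$-point class that is an affine plane has all six of its pairwise sums equal to only three points, which already gives repetitions.

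This residual casework on the patterns $(2,2,3)$, $(1,3,3)$ and $(1,2,4)$ is the main obstacle. The pigeonhole argument handles everything else uniformly, but for these patterns I have to check configurations one by one and make sure no coloring with trivial stabilizer escapes. The full isometry group may be larger than $GL(3,2)$, but I expect collineations to suffice, so I will not need non-affine isometries.

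Finally, the consequences follow at once. Proposition~\ref{prop:smallbounds} gives a $5$-valued metric for every Boolean group, and $\Cyc{2}^3$ needs five, so $5$ is the least uniform Boolean bound. Because $\Xi(\Cyc{2}^3)=R(\Cyc{2}^3)$ by Lemma~\ref{lem:affine}, the same example shows that no constant below $5$ works in Theorem~\ref{thm:finite}, and that theorem itself supplies $32$.
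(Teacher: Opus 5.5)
\textbf{The proof is incomplete: the three residual size patterns are announced but never handled.} Your reduction is sound. Any $g\in GL(3,2)$ with $L\circ g=L$ is a nonidentity isometry fixing $0$. The transvection with center $c$ preserves $L$ as soon as two lines through $c$ have equally colored outer points. Two monochromatic pairs with the same sum give such a transvection. The count $\binom a2+\binom b2+\binom c2\ge 9$ settles $(1,1,5)$ and every coloring with at most two classes, so you even avoid the paper's refinement step. However, for $(2,2,3)$, $(1,3,3)$ and $(1,2,4)$ you only announce a configuration check, with an order-$3$ fallback, and never carry it out. These patterns are the whole difficulty. The paper's proof, after refining to exactly three classes, is an explicit case analysis of $(1,1,5)$, $(1,2,4)$, $(1,3,3)$, $(2,2,3)$ that writes down a linear map in each subcase. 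As written, your proposal does not prove the statement.

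\textbf{The missing observation.} It is small, and it makes your own mechanism work in every case with no fallback. You need two facts.
\begin{enumerate}
\item The three pairwise sums of any $3$-set form a line: the set itself if it is a line, and $\{a+b,a+c,b+c\}$ if it is a triangle $\{a,b,c\}$.
\item Two distinct points off a line $m$ sum to a point of $m$, since $m\cup\{0\}$ has index two.
\end{enumerate}
These close the three patterns as follows.
\begin{itemize}
\item For $(1,3,3)$, the two $3$-classes give two lines of assigned points, and any two lines of the Fano plane meet.
\item For $(2,2,3)$, let $Z$ be the $3$-class and $\ell$ its line of sums. If $Z$ is a line, then $\ell=Z$ and both $2$-classes lie off $Z$, so their sums lie on $\ell$. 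If $Z=\{a,b,c\}$ is a triangle, its complement is $\ell\cup\{a+b+c\}$. Then one $2$-class lies inside $\ell$ and sums to the third point of $\ell$.
\item For $(1,2,4)$, let $Y$ be the complement of the $4$-class $Q$. If $Y$ is a line, the six pairs in $Q$ sum into the three points of $Y$. If $Y=\{a,b,c\}$ is a triangle, then $Q$ contains the line $m=\{a+b,a+c,b+c\}$, whose three internal pairs already realize every point of $m$. The $2$-class lies in $Y$ and also sums into $m$.
\end{itemize}
So every coloring with at most three colors has a repeated assigned point and is fixed by a nonidentity transvection.

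\textbf{Comparison with the paper.} With these facts added, your argument is a complete and more uniform alternative to the paper's. It proves the stronger fact that a transvection always suffices, whereas the paper uses a $3$-cycle on a basis in one subcase. The price is the pair-sum bookkeeping instead of explicit maps. Your derivation of the two consequences is fine.
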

\begin{proof}
Write $V=\F_2^3$ additively, so that $xy^{-1}=x+y$ and $d(x,y)=L(x+y)$
with $L$ even. The nonzero level sets of $L$ partition $V\setminus\{0\}$,
and a metric with at most four values has at most three such classes.
If a nonidentity linear map $\varphi$ of $V$ preserves every class, then
$d(\varphi x,\varphi y)=L(\varphi(x+y))=L(x+y)$, so $\varphi$ is an
isometry fixing $0$, whereas $R(V)$ has trivial stabilizer at $0$.
It therefore suffices to show that every partition of $V\setminus\{0\}$
into at most three classes is preserved by a nonidentity linear map.
A map preserving a refinement of a partition preserves the partition,
and every partition into fewer than three classes has a class with at
least two elements and hence refines to one with exactly three classes.
So let $Y_1,Y_2,Y_3$ be a partition of the seven nonzero vectors into
three classes, listed with $|Y_1|\le|Y_2|\le|Y_3|$. We use freely that any
two distinct nonzero vectors are independent, that three distinct nonzero
vectors are dependent exactly when one is the sum of the other two, and
that any bijection between bases extends uniquely to a linear
automorphism. In each case below the class $Y_3$ is the complement of
$Y_1\cup Y_2$ and is preserved automatically.

\emph{Sizes $(1,1,5)$.} Let $Y_1=\{u\}$ and $Y_2=\{v\}$, and extend to a
basis $u,v,w$. The map fixing $u$ and $v$ and sending $w\mapsto w+u$ is
nonidentity and preserves both singleton classes.

\emph{Sizes $(1,2,4)$.} Let $Y_1=\{u\}$ and $Y_2=\{v,w\}$. If $u,v,w$ are
independent, swap $v$ and $w$ and fix $u$. Otherwise $w=u+v$; choose
$x\notin\langle u,v\rangle$, fix $u$ and $x$, and send $v\mapsto u+v$.
Then $w=u+v\mapsto v$, so $Y_2$ is preserved.

\emph{Sizes $(1,3,3)$.} Let $Y_1=\{u\}$ and $Y_2=\{a,b,c\}$. If
$c=a+b$, then $u\notin\langle a,b\rangle$, and the map swapping $a$ and
$b$ and fixing $u$ preserves $Y_2$. Otherwise $a,b,c$ is a basis and $u$
is one of $a+b$, $a+c$, $b+c$, $a+b+c$. If $u=a+b+c$, the cyclic map
$a\mapsto b\mapsto c\mapsto a$ fixes $u$. If $u$ is the sum of two of
$a,b,c$, swap those two and fix the third.

\emph{Sizes $(2,2,3)$.} Let $Y_1=\{a,b\}$ and $Y_2=\{c,w\}$. Suppose
first that $a,b,c$ are independent, so that
$w\in\{a+b,\,a+c,\,b+c,\,a+b+c\}$. If $w\in\{a+b,a+b+c\}$, swap $a$ and
$b$ and fix $c$; then $w$ is fixed. If $w=a+c$, fix $a$ and $b$ and send
$c\mapsto a+c$; this swaps $c$ and $w$. The case $w=b+c$ is symmetric.
Suppose instead that $c=a+b$, and choose $x\notin\langle a,b\rangle$, so
that $w\in\{x,\,a+x,\,b+x,\,a+b+x\}$. Swap $a$ and $b$, which fixes $c$,
and send $x$ to $x$ if $w\in\{x,a+b+x\}$ and to $a+b+x$ if
$w\in\{a+x,b+x\}$. In each case $w$ is fixed, so $Y_2$ is preserved.

Hence no metric with at most four values realizes $R(\Cyc{2}^3)$, and five
suffice by Proposition~\ref{prop:smallbounds}. Since $\Cyc{2}^3$ is Boolean
and $\Xi(\Cyc{2}^3)=R(\Cyc{2}^3)$, both consequences follow.
\end{proof}

The case analysis was also confirmed by exhaustive enumeration: none
of the $365$ partitions of $\F_2^3\setminus\{0\}$ into at most three
classes realizes $R(\Cyc{2}^3)$, whereas $168$ of the $350$ partitions
into exactly four classes do. To reproduce these counts, order the
nonzero vectors and generate each set partition once, assigning class
labels in order of first occurrence; for its class map $C$, test all
class-preserving permutations $f$ fixing $0$ against
$C(f(x)+f(y))=C(x+y)$ for every pair $x\ne y$. Enumerating arbitrary
bijections within the classes, rather than only linear maps, computes
the full identity stabilizer, and the partition realizes translations
exactly when this stabilizer is trivial. The same test shows that
$\Cyc{2}^2$ needs exactly four values and that the metric of
Proposition~\ref{prop:smallbounds} on $\Cyc{2}^4$ has exactly five values
and trivial identity stabilizer. The ancillary files include
\texttt{verify\_boolean\_enumeration.py} and its per-partition results;
these finite checks are not used in the proof above.

\section{Regular subgroups and the complete classification}\label{sec:natural}

The nonabelian assertion below follows from
Theorem~\ref{thm:orientation-native} and Lemma~\ref{lem:conventions}.
It is also item~(5) of \cite[Theorem~2]{LeemannSalle}.
The abelian assertions are Lemma~\ref{lem:affine}.

\begin{theorem}[Inverse-pair symmetry classification]\label{thm:XiStructure}
For every group $G$:
\begin{enumerate}
\item if $G$ is Boolean, then $\Xi(G)=R(G)$;
\item if $G$ is abelian, written additively, then
\[
 \Xi(G)=\{x\mapsto c+x,\ x\mapsto c-x:c\in G\};
\]
\item if $G$ is nonabelian and not generalized dicyclic, then
$\Xi(G)=R(G)$.
\end{enumerate}
\end{theorem}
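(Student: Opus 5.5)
The three items follow from results already in place, so the plan is to assemble them rather than argue anew. Items (1) and (2) are Lemma~\ref{lem:affine}. For abelian $G$ written additively, that lemma gives $\Xi(G)=\Affpm(G)$, which is exactly the displayed set $\{x\mapsto c+x,\ x\mapsto c-x\}$. The proof there normalizes by a translation to fix $0$. Inverse-pair preservation at $0$ then forces $F(x)\in\{\pm x\}$, and the comparison at a fixed $a$ with $2a\neq0$ forces a uniform sign. If $G$ is Boolean, then $2G=\{0\}$, so $-x=x$ and the two families coincide. This leaves only translations, and $x\mapsto c+x$ is the right translation $R_c$, so $\Xi(G)=R(G)$.

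Item (3) uses the external input. Let $G$ be nonabelian and not generalized dicyclic. Condition (1) of Theorem~\ref{thm:orientation-native} holds, since $G$ is neither generalized dicyclic nor abelian. Hence $G$ is orientation-rigid, that is, $\Xi^{\mathrm L}(G)=\mathcal L(G)$. Lemma~\ref{lem:conventions} with $\mathcal W=G$ gives $\Xi(G)=J\,\Xi^{\mathrm L}(G)\,J$. Also $J\mathcal L_gJ=R_{g^{-1}}$, which is the inverse of the stated identity $JR_gJ=\mathcal L_{g^{-1}}$ because $J^2=\id$. Therefore $J\mathcal L(G)J=R(G)$, and $\Xi(G)=R(G)$. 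Alternatively, the last clause of Theorem~\ref{thm:short} gives $\Xi_{G,S^{\le3}}=R(G)$ for any symmetric generating set $S$, for instance $S=G$. Since $\Xi(G)\le\Xi_{G,S^{\le3}}$ and $R(G)\le\Xi(G)$, equality follows.

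The only real point to check is the conjugation of the subgroup $\mathcal L(G)$ onto $R(G)$. Inversion is a bijection of $G$, so $g\mapsto g^{-1}$ runs over all of $G$ and the correspondence is onto $R(G)$. The content of item (3) is borrowed entirely from the Leemann--de la Salle theorem, so no new obstacle arises.
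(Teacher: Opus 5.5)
Your proposal is correct and follows the paper's route. Items (1) and (2) come from Lemma~\ref{lem:affine}: a Boolean group is automatically abelian, and there $\Affpm(G)=T(G)=R(G)$. Item (3) is Theorem~\ref{thm:orientation-native}, transported to the right convention by conjugation with $J$ as in Lemma~\ref{lem:conventions}. Your alternative through the last clause of Theorem~\ref{thm:short} with $S=G$ is also valid, since that clause rests on the same external input and does not use the statement being proved.
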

\begin{proof}
The abelian and Boolean assertions are Lemma~\ref{lem:affine}.
The nonabelian assertion is the cited orientation-rigidity theorem,
transported by $J(x)=x^{-1}$ as in Section~\ref{sec:finite}.
\end{proof}

The full external theorem also describes the generalized dicyclic cases.
Their nonnaturality will follow from a direct regular-subgroup obstruction,
not merely from existence of a nontrivial identity stabilizer.

\subsection{Compatible group laws as regular actions}

This is the regular-action correspondence underlying Sabidussi's
characterization of Cayley graphs~\cite{Sabidussi}. We give the
set-theoretic correspondence explicitly, including the opposite-group
convention.

The right translations of a group law on a set form a regular
permutation group. Conversely, let $K\leq\Sym(X)$ act regularly and fix
$o\in X$. Write $k_x$ for the unique element with $k_x(o)=x$. Define
\[
 x*y=k_y(x).
\]
Regularity gives
\[
 (k_y\circ k_x)(o)=k_y(x)=x*y,\qquad k_{x*y}=k_y\circ k_x.
\]
Associativity is therefore explicit:
\[
 (x*y)*z=k_zk_y(x)=k_{y*z}(x)=x*(y*z).
\]
Also $k_o=\id$, so $o$ is a two-sided identity, and the element
$k_x^{-1}(o)$ is a two-sided inverse of $x$.
This is the group law transported from $K^{\mathrm{op}}$ to $X$; its
right translations are exactly $K$. Since inversion identifies a group
with its opposite, the correspondence gives the following criterion.

\begin{lemma}[Regular-subgroup criterion]\label{lem:regular}
A right-invariant metric $d$ on $G$ is naturalizing if and only if every
regular subgroup of $\Isom(G,d)$ is abstractly isomorphic to $G$.
A transitive subgroup of $R(G)$ is $R(G)$ itself. In particular,
$\Isom(G,d)=R(G)$ implies that $d$ is naturalizing.
\end{lemma}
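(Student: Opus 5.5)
The lemma has three assertions, and I will prove them in the order in which each feeds the next, using only the regular-action correspondence just described.

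\emph{First assertion: the criterion.} Let $d$ be a right-invariant metric on $G$ and put $I=\Isom(G,d)$. Suppose $*$ is a group law on the underlying set whose right translations are isometries. Those right translations form a regular subgroup $K_*\leq I$, and $K_*$ is isomorphic to $(G,*)^{\mathrm{op}}$, hence to $(G,*)$ via inversion. So if every regular subgroup of $I$ is isomorphic to $G$, then every compatible law is isomorphic to $G$, and $d$ is naturalizing.

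Conversely, suppose $K\leq I$ is regular. Fix any $o\in G$ and apply the construction $x*y=k_y(x)$. This yields a group law on $G$ whose right translations are exactly $K$, so they are isometries. The correspondence shows that $(G,*)\cong K^{\mathrm{op}}\cong K$. Hence, if $d$ is naturalizing, then $K\cong(G,*)\cong G$. The only points to check are that the correspondence identifies the right-translation group of $*$ with $K$ exactly, which was verified above, and that a group is isomorphic to its opposite via $g\mapsto g^{-1}$.

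\emph{Second assertion: transitive subgroups of $R(G)$.} Let $H\leq R(G)$ be transitive. Then $H=\{R_g:g\in H_0\}$ for some subset $H_0$, and $H_0$ is a subgroup of $G$ because $R_gR_h=R_{hg}$ and $g\mapsto R_g$ is injective. Transitivity gives $H_0=e\cdot H_0=G$, since the orbit of $e$ under $H$ is $\{eg:g\in H_0\}=H_0$. Hence $H=R(G)$.

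\emph{Third assertion: the consequence.} If $\Isom(G,d)=R(G)$, then any regular subgroup is transitive, so by the second assertion it equals $R(G)$. Since $R(G)\cong G^{\mathrm{op}}\cong G$, every regular subgroup is isomorphic to $G$, and the first assertion shows that $d$ is naturalizing.

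None of these steps is a real obstacle. The one place to be careful is the convention: the identity element $o$ of a competing law may differ from $e$, and the isomorphism passes through the opposite group. This is harmless because the inverse map $g\mapsto g^{-1}$ identifies any group with its opposite.
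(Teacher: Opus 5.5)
Your proposal is correct and follows essentially the same route as the paper. Both directions of the criterion come from the regular-action correspondence $x*y=k_y(x)$ together with the identification of a group with its opposite via inversion, and the transitivity claim follows because the orbit of $e$ under a subgroup of $R(G)$ determines which right translations it contains; your check that $H_0$ is a subgroup is harmless but not needed.
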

\begin{proof}
The correspondence above identifies compatible group laws, up to taking
opposites, with regular subgroups. Also, $R_g\circ R_h=R_{hg}$, so
$g\mapsto R_{g^{-1}}$ is the isomorphism from $G$ to its right-translation
group under composition. Finally, a subgroup of $R(G)$
that is transitive must contain the unique right translation sending $e$
to each $g\in G$, so it equals $R(G)$.
\end{proof}

\subsection{The positive cases}

\begin{proposition}\label{prop:positive}
The metric of Theorem~\ref{thm:finite} is naturalizing if $G$ is
nonabelian and not generalized dicyclic, or if $G=A$ is abelian with
$2A=A$ or $2A=\{0\}$.
\end{proposition}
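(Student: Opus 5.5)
By Lemma~\ref{lem:regular}, it suffices to show that every regular subgroup $K$ of $\Isom(G,d)$ is abstractly isomorphic to $G$. Theorem~\ref{thm:finite} gives $\Isom(G,d)=\Xi(G)$, so we may replace $\Isom(G,d)$ by the explicit description in Theorem~\ref{thm:XiStructure}. We split into three cases.

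\emph{The rigid cases.} If $G$ is nonabelian and not generalized dicyclic, Theorem~\ref{thm:XiStructure}(3) gives $\Xi(G)=R(G)$. If $2A=\{0\}$, Theorem~\ref{thm:XiStructure}(1) gives the same equality. In both cases the last sentence of Lemma~\ref{lem:regular} finishes the proof: a transitive subgroup of $R(G)$ is $R(G)$, which is isomorphic to $G$.

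\emph{The case $2A=A$ with $A\ne\{0\}$.} Here $\Xi(A)=T(A)\rtimes\langle\iota\rangle$ with $\iota(x)=-x$, by Lemma~\ref{lem:affine}. Let $K$ be regular and consider $N=K\cap T(A)$. Since $T(A)$ has index at most two in $\Xi(A)$, the subgroup $N$ has index at most two in $K$. If $N=K$, then $K$ is a transitive subgroup of $T(A)=R(A)$, so $K=T(A)\cong A$.

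Otherwise $K$ contains a reflection $\refl_c(x)=c-x$. We aim for a contradiction. Regularity implies $K$ contains exactly one element mapping $c'$ to any prescribed point, and $N$ is a subgroup of translations of index two in $K$. The orbit $N\cdot 0$ is therefore a subgroup $B=\{b:\tau_b\in N\}$ of $A$ (with $\tau_b(x)=b+x$), of index two in $A$, because $K$ is transitive and the $N$-orbits are the two cosets of $B$. But a divisible-by-two group ($2A=A$) has no subgroup of index two: $A/B\cong\Cyc{2}$ would force $2A\subseteq B$. This contradicts $2A=A$, so $K=T(A)$.

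The step needing care is justifying that $N$ has index exactly two with orbit $B$ a subgroup of index two. For this, use regularity: $N$ acts regularly on its orbit $B=N\cdot0$, and $K=N\sqcup N\refl_c$ sends $0$ into $B\cup(c-B)$. Transitivity forces $A=B\cup(c-B)$, while freeness forces the union to be disjoint. Hence $c-B=c+B$ is the other coset, so $[A:B]=2$.

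This regular-subgroup analysis is also the template for the negative cases treated later. There, $2A\ne A$ and $2A\ne\{0\}$ allow an index-two $N$ together with a reflection producing a generalized dihedral regular subgroup not isomorphic to $A$.
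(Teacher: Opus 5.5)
Your proof is correct. The rigid cases (nonabelian and not generalized dicyclic, or $2A=\{0\}$) are handled exactly as in the paper, via $\Xi(G)=R(G)$ and Lemma~\ref{lem:regular}. In the case $2A=A$ you take a different route.

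The paper argues one element at a time. Every non-translation isometry is some $\refl_c$, and $2x=c$ has a solution because $2A=A$. That solution is a fixed point of $\refl_c$, so no reflection can lie in a regular subgroup. Hence every regular subgroup sits inside $T(A)$, and transitivity makes it equal to $T(A)$.

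You instead analyze the whole subgroup $K$. A reflection in $K$ makes $N=K\cap T(A)$ of index two, and its orbit $B$ of $0$ is a subgroup of $A$. Transitivity together with freeness then gives $A=B\sqcup(c+B)$. This is an index-two subgroup, which $2A=A$ forbids.

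The paper's version is shorter and uses only freeness; it excludes reflections even from semiregular subgroups. Yours uses both halves of regularity but yields more structure, because it is exactly the converse of the construction in Proposition~\ref{prop:abelian-negative}. Taken together, the two arguments show the following when $2A\ne\{0\}$. The regular subgroups of $\Affpm(A)$ other than $T(A)$ are precisely the sets $\{\tau_b:b\in B\}\cup\{\refl_{c'}:c'\in c+B\}$ with $[A:B]=2$ and $c\notin B$.
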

\begin{proof}
In the nonabelian case and the Boolean case,
Theorem~\ref{thm:XiStructure} gives $\Xi(G)=R(G)$, so
Lemma~\ref{lem:regular} applies.

Suppose now that $A$ is nonzero abelian and $2A=A$.
Every isometry outside the translation group has the form
$\refl_c(x)=c-x$. The equation $2x=c$ is solvable, so every such map has a
fixed point. It cannot belong to a regular subgroup, whose nonidentity
elements are fixed-point-free. Thus every regular subgroup lies in the
translation group; transitivity forces it to be the whole translation
group. Lemma~\ref{lem:regular} gives naturality.
\end{proof}

The condition $2A=A$ requires existence, not uniqueness, of halves.
In particular, the argument allows nonzero $2$-torsion.
For the qualitative abelian conclusion, the same proof uses the metric of
Theorem~\ref{thm:countable} instead; together with the elementary abelian
proof of Theorem~\ref{thm:XiStructure}, it is independent of both external
Leemann--de la Salle theorems. The sharper finite bounds are a separate
quantitative assertion. Likewise, the qualitative positive nonabelian
conclusion follows from Theorem~\ref{thm:countable},
\cite[Theorem~7]{LeemannSalleFew}, and Lemma~\ref{lem:regular}, with no
use of the later short-word theorem.

\subsection{The negative abelian cases}
The following obstruction applies to every translation-invariant metric,
not just the metrics constructed above.

\begin{proposition}\label{prop:abelian-negative}
If an abelian group $A$ satisfies $2A\neq A$ and $2A\neq\{0\}$,
then $A$ is not natural.
\end{proposition}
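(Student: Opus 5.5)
The plan is to exhibit, for an arbitrary translation-invariant metric $d$ on $A$, a regular subgroup of $\Isom(A,d)$ that is not isomorphic to $A$, and then conclude with Lemma~\ref{lem:regular}. By Lemma~\ref{lem:unavoidable} and Lemma~\ref{lem:affine}, $\Isom(A,d)\supseteq\Xi(A)=\Affpm(A)$ for every such $d$. It is therefore enough to find a suitable regular subgroup $K$ inside $\Affpm(A)$.

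\emph{Choosing an index-two subgroup.} Since $2A\ne A$, the quotient $A/2A$ is a nonzero vector space over $\F_2$. In ZFC it has a hyperplane: extend a nonzero vector to a basis and take the kernel of one coordinate functional. The preimage of this hyperplane is a subgroup $N$ with $2A\subseteq N$ and $[A:N]=2$.

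\emph{Defining and checking $K$.} Put
\[
 K=\{x\mapsto x+n:n\in N\}\cup\{\refl_c:c\in A\setminus N\},\qquad \refl_c(x)=c-x.
\]
Closure follows from three identities:
\[
 \refl_c\circ\refl_{c'}=(x\mapsto x+(c-c')),\qquad
 \refl_c\circ(x\mapsto x+n)=\refl_{c-n},\qquad
 (x\mapsto x+n)\circ\refl_c=\refl_{c+n}.
\]
Here $c-c'\in N$ whenever $c,c'\notin N$, because $N$ has index two, and $c\pm n\notin N$. Each $\refl_c$ is an involution, so $K$ is a subgroup.

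Transitivity is immediate. For $x\in N$ the translation by $x$ sends $0$ to $x$, and for $x\notin N$ the reflection $\refl_x$ sends $0$ to $x$.

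Freeness is the key point. The map $\refl_c$ has a fixed point exactly when $2x=c$ is solvable, that is, when $c\in 2A\subseteq N$. So no reflection in $K$ has a fixed point, and nonzero translations have none either. Hence $K$ acts regularly.

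\emph{Showing $K\not\cong A$.} Fix any $r=\refl_c\in K$. Then $K=N\rtimes\langle r\rangle$, with $r$ acting on $N$ by $n\mapsto -n$. I split into two cases according to whether $N$ is Boolean.
\begin{itemize}
\item If $N$ contains an element $n$ with $2n\ne0$, then $r$ does not commute with the translation by $n$. So $K$ is nonabelian, while $A$ is abelian.
\item If $N$ is Boolean, then every element of $K$ has order at most two. Translations by elements of $N$ have this property by assumption, and every reflection is an involution. Thus $K$ is Boolean. But $A$ is not Boolean, since $2A\ne\{0\}$.
\end{itemize}
In either case $K\not\cong A$. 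Because $d$ was arbitrary, Lemma~\ref{lem:regular} shows that no translation-invariant metric on $A$ is naturalizing.

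The main obstacle is the hypothesis $2A\ne\{0\}$. It does not guarantee that $N$ is non-Boolean: for $A=\Cyc{4}$ we get $N=2A\cong\Ctwo$, so $K$ is Boolean rather than nonabelian. This is why the case split in the last step is needed, and it covers both possibilities.
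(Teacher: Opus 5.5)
Your proposal is correct and follows essentially the same argument as the paper. Both use the index-two subgroup $N\supseteq 2A$ pulled back from a hyperplane of $A/2A$, and the regular subgroup formed by translations by $N$ together with the reflections $\refl_c$ for $c\in A\setminus N$ (written $\refl_{a+b}$, $b\in N$, in the paper). Both get fixed-point-freeness from $2A\subseteq N$ and split into the nonabelian case $2N\ne\{0\}$ and the Boolean case $2N=\{0\}$. The only difference is cosmetic: you obtain the reflections as isometries from Lemmas~\ref{lem:unavoidable} and~\ref{lem:affine}, whereas the paper checks directly that inversion preserves any translation-invariant metric.
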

\begin{proof}
Let $d$ be any translation-invariant metric on $A$. Inversion is an
isometry because
\[
 d(-x,-y)=d(0,x-y)=d(0,y-x)=d(x,y).
\]
Thus all translations $\tau_b(x)=x+b$ and all reflections
$\refl_c(x)=c-x$ are isometries.

The nonzero quotient $A/2A$ is an $\F_2$-vector space. Choose a nonzero
linear functional from it to $\F_2$ and let $N$ be the kernel of its
pullback to $A$. Then $[A:N]=2$ and $2A\subseteq N$. Choose $a\in A\setminus N$
and put
\[
 K=\{\tau_b:b\in N\}\ \cup\ \{\refl_{a+b}:b\in N\}.
\]
The identities $\refl_a^2=\id$ and $\refl_a\tau_b \refl_a=\tau_{-b}$ show that $K$
is a subgroup of $\Isom(A,d)$, isomorphic to $N\rtimes_{-1}\Ctwo$.
Here $\refl_a$ is not a translation, since $2A\neq\{0\}$.

The orbit of zero is $N\cup(a+N)=A$. Nonidentity translations are
fixed-point-free. A fixed point of $\refl_{a+b}$ would satisfy
$2x=a+b\notin N$, impossible because $2A\subseteq N$. Hence $K$ is
regular.

If $2N\neq\{0\}$, inversion acts nontrivially on $N$, so $K$ is
nonabelian and not isomorphic to $A$. If $2N=\{0\}$, then $K$ is Boolean,
whereas $A$ is not. Thus in either case $d$ admits a nonisomorphic regular
subgroup. Since $d$ was arbitrary, Lemma~\ref{lem:regular} proves the claim.
\end{proof}

\subsection{The generalized dicyclic obstruction}

The standard inversion on the nontrivial coset of a generalized dicyclic
group is an unavoidable isometry; see~\cite{LeemannSalle}.
Knill~\cite[proof of Theorem~3]{KnillNatural} uses the associated generalized
dihedral group as an obstruction. We give the full permutation argument,
including regularity and nonisomorphism for arbitrary cardinalities.

\begin{proposition}\label{prop:dic-negative}
Every generalized dicyclic group is nonnatural.
\end{proposition}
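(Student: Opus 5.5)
Following Lemma~\ref{lem:regular}, I will show that for every right-invariant metric $d$ on a generalized dicyclic group $G=\langle A,t\rangle$, the isometry group contains a regular subgroup not isomorphic to $G$. The natural candidate is the generalized dihedral group $A\rtimes_{-1}\Ctwo$, obtained by replacing the right translation by $t$ with a coset-swapping involution $\omega$ that also preserves inverse pairs.

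First I construct $\omega$ and show it lies in $\Xi(G)$. Every element of $G\setminus A$ has the form $at$ with $a\in A$, and $(at)^2=a\,tat^{-1}\,t^2=z$. So every element outside $A$ has order four and its inverse is $zat$. The key fact is that $\psi$, the identity on $A$ and inversion on $G\setminus A$, lies in $\Xi(G)$. To check this directly, compare $\psi(x)\psi(y)^{-1}$ with $xy^{-1}$ in four cases according to the cosets of $x$ and $y$:
\begin{enumerate}
\item If $x,y\in A$, there is nothing to prove.
\item If $x=a$ and $y=bt$, then $\psi(y)^{-1}=bt$, so the quotient is $abt$, while $xy^{-1}=a\,zbt$. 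These two elements are mutually inverse, since $(abt)^{-1}=zabt$.
\item If $x=at$ and $y=b$, apply case~(2) to the pair with roles reversed; the inverse-pair condition is symmetric in $x,y$.
\item If $x=at$ and $y=bt$, then $xy^{-1}=ab^{-1}\cdot$(after using $tb^{-1}t^{-1}=b$) $=ab$ up to inversion, and $\psi(x)\psi(y)^{-1}=zat\,(zbt)^{-1}=zat\,bt$. This is the same element up to inversion because $z$ is central and $z^2=e$.
\end{enumerate}
This is routine bookkeeping, and it is consistent with Theorem~\ref{thm:ldls-native}(4) transported by Lemma~\ref{lem:conventions}. By Lemma~\ref{lem:unavoidable}, $\psi$ and all right translations are isometries of every right-invariant $d$.

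Next I define the competing subgroup
\[
K=\{R_a:a\in A\}\cup\{\omega R_a:a\in A\},
\]
where $\omega$ is a composite such as $\psi\circ R_t$, or $R_t\circ\psi$, chosen so that $\omega$ swaps the two cosets of $A$ and satisfies $\omega^2=\id$. The computation for $\psi R_t$ runs as follows. For $x=a\in A$, we get $\psi(at)=zat$. Applying $\psi R_t$ again gives $\psi(zat\cdot t)=\psi(za z)=a$, since $zat\,t=zaz=a$. So $\omega$ is an involution on $A$, and the analogous check on $G\setminus A$ should also work. I then verify that $\omega R_a\omega=R_{a^{-1}}$ on both cosets, which shows that $K$ is a subgroup isomorphic to $A\rtimes_{-1}\Ctwo$.

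Regularity follows from two facts. The orbit of $e$ is $A\cup\omega(A)=A\cup At=G$. Every element $\omega R_a$ swaps the two cosets, so it has no fixed points. Nonidentity elements $R_a$ are also fixed-point-free, so all point stabilizers are trivial.

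Finally I show that $K\not\cong G$. In $K$, each element $\omega R_a$ of the nontrivial coset of the index-two abelian subgroup is an involution. In $G$, the elements of order four include all of $G\setminus A$. Counting alone is not enough for an isomorphism-invariant argument in arbitrary cardinality. Instead I argue that $G$ has a unique involution-free situation: the involutions of $G$ all lie in $A$, because every element of $G\setminus A$ has order four. Hence the involutions of $G$ together with $e$ form a subset of $A$. In $K$, by contrast, the involutions generate $K$, since the products $\omega R_a\cdot\omega$ give all of $A$. In $G$ the involutions generate a subgroup contained in $A\neq G$. This invariant distinguishes the two groups.

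The main obstacle I expect is the careful verification that $\omega$ is a genuine involution normalizing the translations by $A$ via inversion, under the paper's right-translation and composition conventions. To handle it, I will try both orderings $\psi R_t$ and $R_t\psi$ and keep whichever satisfies $\omega^2=\id$ on both cosets.
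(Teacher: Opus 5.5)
Your proposal is correct and follows the paper's proof step for step. The shared steps are: $\psi\in\Xi(G)$ (fixing $A$, inverting $G\setminus A$); a coset-swapping involution $\omega$ built from $\psi$ and $R_t$ with $\omega R_a\omega=R_{a^{-1}}$; the regular subgroup $K\cong\Dih(A)$; and the invariant ``generated by involutions'', which $K$ has and $G$ lacks because $G\setminus A$ consists of elements of order four.

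There is one computational slip. In case~(4), $xy^{-1}=at\,t^{-1}b^{-1}=ab^{-1}$ exactly, not ``$ab$ up to inversion'', which is false in general; it equals $zat\,bt=ab^{-1}$, so your conclusion still holds.

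Both orderings of $\omega$ work. The paper uses $R_t\circ\psi$, and $\psi\circ R_t=\psi(R_t\circ\psi)\psi$ is also an involution with the same conjugation identity.

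The paper also skips your four-case check. It notes that $\psi(g)=gz^{\delta(g)}$ is an automorphism with $\psi(g)\in[g]$, so $\psi(x)\psi(y)^{-1}=\psi(xy^{-1})\in[xy^{-1}]$.
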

\begin{proof}
Let $G=\langle A,t\rangle$ satisfy~\eqref{eq:dic}. The element $z=t^2$
commutes with $A$ because $A$ is abelian, and with $t$ because it is its
square. Thus $z$ is central. For every $a\in A$,
\begin{equation}\label{eq:outside-squares}
 (at)^2=a(tat^{-1})t^2=z.
\end{equation}
Thus every element outside $A$ has order four.

Let $\delta:G\to\mathbb Z/2\mathbb Z$ be the quotient map with
kernel $A$, and define
\[
 \psi(g)=g z^{\delta(g)}.
\]
Centrality of $z$ and $z^2=e$ give
$\psi(gh)=ghz^{\delta(g)+\delta(h)}=\psi(g)\psi(h)$; this covers
all four coset cases. Also $\psi^2=\id$, so $\psi$ is an
involutory automorphism. It fixes $A$ pointwise and, by~\eqref{eq:outside-squares},
\[
 \psi(at)=atz=(at)^{-1}\qquad(a\in A).
\]
Thus $\psi(g)\in[g]$ for all $g$. Consequently
$\psi(x)\psi(y)^{-1}=\psi(xy^{-1})\in[xy^{-1}]$, and
$\psi\in\Xi(G)$.

For any right-invariant metric $d$, both $\psi$ and all right translations
are isometries. Define
\[
 \omega=R_t\circ\psi,\qquad \omega(x)=\psi(x)t.
\]
Since $t^{-1}=tz$ and $z$ is central, conjugation by $t^{-1}$ also
inverts $A$. Thus $t^{-1}a^{-1}t=a$, and
\begin{equation}\label{eq:cosetswap}
 \omega(a)=at,\qquad
 \omega(at)=(at)^{-1}t=t^{-1}a^{-1}t=a\quad(a\in A).
\end{equation}
Hence $\omega^2=\id$ and $\omega$ swaps the two cosets. For $a,b\in A$,
\[
 \begin{aligned}
 (\omega R_a\omega)(b)
   &=\omega(bta)=\omega(ba^{-1}t)=ba^{-1},\\
 (\omega R_a\omega)(bt)
   &=\omega(ba)=bat=bta^{-1}.
 \end{aligned}
\]
These are the values of $R_{a^{-1}}$ on the two cosets, so
\begin{equation}\label{eq:dic-conj}
 \omega R_a\omega=R_{a^{-1}}\qquad(a\in A).
\end{equation}
Consequently,
\[
 K=\langle R_a:a\in A,\ \omega\rangle
   =R(A)\cup R(A)\omega
   \cong A\rtimes_{-1}\Ctwo=\Dih(A)
\]
is a subgroup of $\Isom(G,d)$.

The translations in $R(A)$ send $e$ to all elements of $A$; the maps
$R_a \omega$ send it to all elements of $tA$. Hence $K$ is transitive.
Nonidentity right translations have no fixed points, and every element of
$R(A)\omega$ swaps the cosets, so none of them has a fixed point. Thus $K$ is
regular.

Each of $\omega$ and $R_a \omega$ is an involution, by~\eqref{eq:dic-conj}, and
$R_a=(R_a \omega)\omega$. Hence $K$ is generated by involutions. In contrast,
\eqref{eq:outside-squares} shows that every involution of $G$ belongs to
the proper subgroup $A$. Thus $G$ is not generated by involutions and
$K\not\cong G$. This isomorphism invariant distinguishes the groups
without using a cardinality count. Lemma~\ref{lem:regular}, for arbitrary
$d$, proves nonnaturality.
\end{proof}

The argument includes $Q_8$ and $Q_8\times B_0$ for every Boolean group $B_0$.
It does not assume that $\psi$ exhausts the identity stabilizer in these
exceptional groups.

The classification is not determined merely by the presence or absence
of extra isometries: one must identify the abstract types of compatible
regular subgroups.

\begin{proof}[Proof of Theorem~\ref{thm:classification}]
Proposition~\ref{prop:positive} proves sufficiency in both the abelian and
nonabelian cases. Proposition~\ref{prop:abelian-negative} excludes every
remaining abelian group. Proposition~\ref{prop:dic-negative} excludes the
generalized dicyclic groups. These alternatives cover all groups. The
positive cases use the metric of Theorem~\ref{thm:finite}, so the same
$32$-value bound holds for a naturalizing metric.
\end{proof}

\section{Consequences and boundaries}\label{sec:consequences}

\begin{corollary}[Groups with square roots]\label{cor:squares}
If every element of $G$ has a square root in $G$, then $G$ is natural.
\end{corollary}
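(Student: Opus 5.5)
We deduce the corollary from Theorem~\ref{thm:classification} by checking that a group in which every element is a square satisfies the relevant condition. We split according to whether $G$ is abelian.

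\emph{Abelian case.} Write $G=A$ additively. The hypothesis says that every $x\in A$ has the form $x=2y$, that is, $2A=A$. Part~(1) of Theorem~\ref{thm:classification} then gives naturality at once. The trivial group is covered by the convention that it carries the zero metric.

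\emph{Nonabelian case.} By part~(2) it suffices to show that a generalized dicyclic group $G=\langle A,t\rangle$ always contains a non-square. We use the index-two subgroup $A$ and the quotient map $\delta:G\to\mathbb Z/2\mathbb Z$ with kernel $A$, as in the proof of Proposition~\ref{prop:dic-negative}. For every $g\in G$ we have $\delta(g^2)=2\delta(g)=0$, so every square lies in $A$. The element $t\notin A$ is therefore not a square. Hence a group in which every element has a square root is never generalized dicyclic, and part~(2) gives naturality. The same argument shows more generally that no group with a proper index-two subgroup has all square roots. In fact, this is essentially the same fact as $2A=A$ excluding Proposition~\ref{prop:abelian-negative}.

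There is no real obstacle here. The only point to check is that the index-two argument uses the existence of the homomorphism onto $\mathbb Z/2\mathbb Z$, and this holds because $A$ has index two and is therefore normal. Combining the two cases with Theorem~\ref{thm:classification} shows that $G$ is natural, and moreover that it admits a naturalizing metric with at most $32$ values.
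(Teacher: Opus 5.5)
Your proposal is correct and follows essentially the same route as the paper: $2G=G$ in the abelian case, and in the nonabelian case the observation that all squares lie in an index-two subgroup (the paper phrases it via a quotient isomorphic to $\Cyc{2}$), so a generalized dicyclic group cannot have every element a square. Theorem~\ref{thm:classification} then gives naturality, together with the $32$-value bound.
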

\begin{proof}
For an abelian group the hypothesis is $2G=G$. For a nonabelian group,
the hypothesis rules out an index-two subgroup: in a quotient isomorphic
to $\Ctwo$, every square is the identity, contradicting surjectivity of
the quotient map. A generalized dicyclic group has an index-two subgroup,
so Theorem~\ref{thm:classification} applies.
\end{proof}

This is the square-root question in~\cite[Sections~7.5--7.6]{KnillNatural}.
The hypothesis concerns the set of squares, not merely the subgroup they
generate. Theorem~\ref{thm:finite} also gives a common finite bound for
naturalizing metrics on these groups.

\begin{proof}[Proof of Corollary~\ref{cor:fields}]
Let $V$ be a vector space over $\Q$ or a prime field. In characteristic
two, $2V=\{0\}$. In characteristic zero or odd characteristic,
multiplication by two is invertible, so $2V=V$. The abelian part of
Theorem~\ref{thm:classification} makes $(V,+)$ natural. The zero vector
space is included. Every field is a vector space over its prime field,
which is $\Q$ or $\F_p$, and the conclusion follows.
\end{proof}
The corollary concerns the additive group, not uniqueness of field
multiplication. The metric need not be compatible with a prescribed
field topology, scalar multiplication, or a norm.

\begin{example}\label{ex:abelian}
For every index set $I$, the abelian part of
Theorem~\ref{thm:classification} applies to
\[
 \Z[1/2]^{(I)},\qquad \prod_{i\in I}\Z[1/2],\qquad
 (\Cyc{p^n})^{(I)}\quad(p\text{ odd}),\qquad
 \bigoplus_{i\in I}\Cyc{2^\infty}.
\]
In the first two cases halving is coordinatewise; in the third,
multiplication by two is invertible modulo $p^n$; in the fourth,
doubling is onto each Pr\"ufer group and preserves finite support.
The last example has nonzero $2$-torsion, so unique halving is not
required. The metric-realization theorems also apply to the nonnatural
groups $\Z$ and $\Cyc{4}$: their unavoidable isometry groups contain
nonisomorphic regular subgroups.
\end{example}

\begin{corollary}[Finite-valued witnesses suffice]
A group is natural if and only if it admits a naturalizing metric with
values in the fixed palette of Theorem~\ref{thm:finite}.
\end{corollary}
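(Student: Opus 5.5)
The statement to prove is: $G$ is natural exactly when some naturalizing metric on $G$ takes values in the palette $\{0\}\cup\{1+j/62:1\le j\le 31\}$. I would prove the two directions separately, and both reduce to results already established.

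The backward direction is definitional. A naturalizing metric with values in the fixed palette is in particular a naturalizing right-invariant metric. Hence $G$ is natural by the definition given in the introduction.

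For the forward direction, assume $G$ is natural. I would first invoke Theorem~\ref{thm:classification} to place $G$ in one of three cases:
\begin{enumerate}
\item $G$ is abelian with $2G=G$;
\item $G$ is abelian with $2G=\{0\}$;
\item $G$ is nonabelian and not generalized dicyclic.
\end{enumerate}
Here the theorem is used only in its necessity direction, which rests on Propositions~\ref{prop:abelian-negative} and~\ref{prop:dic-negative}: any natural group satisfies the classifying condition. In each of these cases, Proposition~\ref{prop:positive} states that the metric $d$ constructed for Theorem~\ref{thm:finite} is naturalizing. That metric satisfies $\im(d)\subseteq\{0\}\cup\{1+j/62:1\le j\le 31\}$, which is exactly the required palette.

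The trivial group needs a separate check, since it was excluded from the constructions. Its zero metric has image $\{0\}$, which lies in the palette, and it is naturalizing because a one-point set carries only one group law. So nothing beyond citation is required.

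The only step needing care is confirming that the classification theorem covers every natural group, so that no natural group falls outside the positive cases. This is immediate from the exhaustive case split at the end of the proof of Theorem~\ref{thm:classification}: every group is either abelian or nonabelian, and within each class the naturality condition is an exact characterization.
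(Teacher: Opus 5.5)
Your proof is correct and follows the paper's own argument. The backward direction is definitional. The forward direction combines the necessity half of Theorem~\ref{thm:classification} with Proposition~\ref{prop:positive}, whose naturalizing metric is the palette-valued metric of Theorem~\ref{thm:finite}; the paper packages this as the final assertion of Theorem~\ref{thm:classification}, and your separate check of the trivial group matches the paper's zero-metric convention.
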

\begin{proof}
One direction is the definition. In the other direction, apply
Theorem~\ref{thm:classification} and its final assertion.
\end{proof}

\subsection*{Comparison of the constructions}
The bounds of Proposition~\ref{prop:smallbounds} and
Theorem~\ref{thm:basis} are bounds for alternative metrics, not extra
conditions on one construction. Taken together, they give, for every
$\F_p$-vector space of arbitrary dimension, including dimension zero,
a naturalizing metric with at
most
\[
 \begin{cases}
 5,&p=2,\\[1mm]
 \min\{17,\,p+5\},&p\text{ odd}
 \end{cases}
\]
values, including zero. Comparing the two constructions gives
\[
 \begin{array}{c|rrrrrr}
 p&2&3&5&7&11&\text{odd }p\geq13\\ \hline
 \text{upper bound}&5&8&10&12&16&17
 \end{array}
\]
The zero-dimensional space has just the zero distance. Except for the
sharp uniform Boolean bound, no optimality is claimed for these bounds
or for the universal bound.

The countable-valued coordinate construction has a different advantage:
for a fixed hull $E$ and marker subgroup $M$, there is \emph{one} ambient
metric $d$ such that
\[
 \Isom(H,d|_{H\times H})=\Affpm(H)
 \qquad\text{for every }M\le H\le E
\]
simultaneously (Theorem~\ref{thm:coordinate}). Extension of an affine
sign isometry to this same $(E,d)$ is then an immediate consequence, not
the extra rigidity assertion. Theorem~\ref{thm:finite} does not assert
this simultaneous restriction property for a fixed ambient metric.

\subsection*{Where the uniform bounds are new}
For $S=S^{-1}\subseteq G\setminus\{e\}$, the uncolored Cayley graph
$\mathrm{Cay}(G,S)$ has edges $\{x,sx\}$ with $x\in G$ and $s\in S$.
A \emph{graphical regular representation} (GRR) of $G$ is such a graph
whose automorphism group is $R(G)$. The \emph{Cayley index} of $G$ is
\[
 \mathrm{ci}(G)=\min_{\substack{S=S^{-1}\subseteq G\setminus\{e\}\\
                              \langle S\rangle=G}}
 [\Aut(\mathrm{Cay}(G,S)):R(G)].
\]
Allowing nongenerating connection sets gives the same minimum: the
complement of a disconnected graph is connected and has the same
automorphism group. These definitions agree, after conversion of
translation conventions, with \cite[Definitions~1.1--1.2]{MorrisTymburski}.

A three-valued right-invariant metric determines an uncolored Cayley
graph by selecting either of its two positive distance levels; its
isometries are exactly the graph automorphisms. Conversely, assigning
distance $1$ to edges and $2$ to distinct nonadjacent pairs gives a
right-invariant metric with the same automorphism group. Thus the
actual positive values are immaterial to this correspondence.
Including the cases with fewer values, $G$ has a realization of
$\Xi(G)$ with at most three values exactly when some Cayley graph of
$G$ has automorphism group $\Xi(G)$. Since inverse pairs are preserved,
\[
 R(G)\leq\Xi(G)\leq\Aut(\mathrm{Cay}(G,S))
 \qquad(S=S^{-1}).
\]
In particular, whenever the finite index $[\Xi(G):R(G)]$ equals
$\mathrm{ci}(G)$, a graph attaining the minimum has automorphism group
exactly $\Xi(G)$.

This equality is known for every finitely generated infinite group.
For nonabelian groups that are not generalized dicyclic, a GRR exists
by \cite[Corollary~1.2]{LeemannSalleInfinite}. In the abelian and
generalized dicyclic cases, \cite[Theorem~1]{LeemannSalle} gives Cayley
index two, which is the unavoidable index in
Theorem~\ref{thm:ldls-native}. (Theorem~1 is Theorem~A in
arXiv:2105.02326v1.) Every infinite Boolean group also has a GRR, by
Babai's directed theorem~\cite{BabaiInfinite}: each element is its own
inverse, so every Cayley digraph is undirected. This is the observation
in the proof of \cite[Theorem~7]{KnillNatural}.

For finite groups the required input is the complete Cayley-index
classification of Morris and Tymburski
\cite[Table~1 and Sections~2--5]{MorrisTymburski}. Compared with
Theorem~\ref{thm:ldls-native}, it gives
$\mathrm{ci}(G)=[\Xi(G):R(G)]$ outside a finite collection
$\mathcal E$ of finite groups. Hence every finite group outside
$\mathcal E$ has a Cayley graph with automorphism group exactly
$\Xi(G)$; this conclusion uses the preceding subgroup containment,
not only the existence of a small Cayley index. Among Boolean groups,
exactly $\Cyc{2}^2$, $\Cyc{2}^3$, and $\Cyc{2}^4$ lack a GRR, as listed
in \cite[Theorem~2.1]{MorrisTymburski}. The first needs four metric
values, the second needs five by Proposition~\ref{prop:sharp}, and the
third needs at most five by Proposition~\ref{prop:smallbounds}.

The bounds of Theorem~\ref{thm:finite} and
Proposition~\ref{prop:smallbounds} therefore provide a single palette
for every group in the stated class, including the finite exceptions
and groups not covered by the cited existence results. They assert no
optimality for the constants $32$ and $17$. The sharp uniform Boolean
bound $5$ is forced by the finite group $\Cyc{2}^3$.

\subsection*{Relation to uncolored Cayley graphs}
Classical graphical regular representation theory includes the work
of Nowitz and Watkins~\cite{NowitzWatkinsI,NowitzWatkinsII}, Imrich and
Watkins~\cite{ImrichWatkins}, Hetzel~\cite{Hetzel}, and
Godsil~\cite{Godsil}; see also the historical account in
\cite{LeemannSalleFew}. Morris and Tymburski~\cite{MorrisTymburski}
complete the finite Cayley-index computation. For directed graphs,
Babai~\cite{BabaiInfinite} proves the corresponding regular-representation
existence theorem for arbitrary infinite groups. Except for Boolean
groups, where directed and undirected Cayley graphs coincide, these are
related but different representation questions.

A finite-valued metric is the same kind of structure as a complete graph
with finitely many individually preserved edge colors. It is not the same
as a single uncolored graph. Our theorem therefore does not prove the
unrestricted Cayley-index conjecture in~\cite[Conjecture~3]{LeemannSalle}.
For infinite groups it predicts a Cayley index equal to the unavoidable
index $[\Xi(G):R(G)]$. If the conjecture holds, the containment above
therefore gives a Cayley graph realizing $\Xi(G)$ for every infinite
$G$. Combining this with the finite-group input from
\cite[Table~1]{MorrisTymburski}, every group outside the finite collection
$\mathcal E$ then has a realization with at most three metric values.
The least universal distance bound would consequently be the maximum
of the least bounds for the finitely many groups in $\mathcal E$.
This is a finite computation, and its answer is at least five by
Proposition~\ref{prop:sharp}, since $\Cyc{2}^3\in\mathcal E$.

Our metrics are bounded discrete encodings. They need not induce an
existing group topology, be proper, be bi-invariant, or preserve a chosen
large-scale geometry. The fixed number of colors does not make the total
encoded relational structure finite. These distinctions also separate the
abstract naturality classification here from analogous questions with
additional topological or geometric requirements.

\appendix
\section{One coordinate metric for simultaneous subgroup rigidity}\label{app:coordinates}
The universal theorem constructs a metric separately for each group. Here
we construct one metric on a divisible hull whose restrictions work for
all intermediate subgroups containing fixed coordinate markers. The
argument uses Lemmas~\ref{lem:affine} and~\ref{lem:ordinal}, but neither
conflict graphs nor an external orientation-rigidity theorem.

\Needspace{16\baselineskip}
\begin{theorem}[Simultaneous coordinate rigidity]\label{thm:coordinate}
Every abelian group $A$ has an essential embedding in a divisible group
$E$ and a fixed family of coordinate markers $(b_\alpha)_{\alpha<\kappa}$
contained in $A$ with the following property. Put
$M=\langle b_\alpha:\alpha<\kappa\rangle$. There is one translation-invariant
metric $d$ on $E$ such that
\[
 \im(d)\subseteq\{0\}\cup\{1+2^{-n}:n\ge1\}.
\]
For this same $d$,
\[
 \Isom(H,d|_{H\times H})=\Affpm(H)
 \qquad(M\le H\le E).
\]
Every restriction is bounded, uniformly discrete, and complete. In
particular, the assertion holds for $H=A$ and $H=E$.
\end{theorem}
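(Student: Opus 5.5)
We propose to build $E$ from a maximal independent family in $A$, give each element of $E$ a countable "profile" color, and then reduce every isometry to the affine lemma.

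\emph{Choice of $E$ and markers.} Choose, by Zorn's lemma, a maximal independent family $(b_\alpha)_{\alpha<\kappa}$ in $A$. Each $b_\alpha$ should have infinite order or prime order. Then $M=\bigoplus_\alpha\langle b_\alpha\rangle$ is essential in $A$. Take $E$ to be the divisible hull of $A$. It is then also an essential extension of $M$, so $E=\bigoplus_\alpha E_\alpha$, where $E_\alpha\cong\Q$ or $\Cyc{p^\infty}$ is the divisible hull of $\langle b_\alpha\rangle$. This gives coordinates $x=(x_\alpha)$ on $E$, together with $\supp(x)$, $\sz(x)$ and a coordinate type $\mathsf Q_\tau$ for each $E_\alpha$. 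Fix an identification of $E_\alpha$ with $\Q$ or $\Z[1/p]/\Z$ sending $b_\alpha$ to $1$ or $1/p$. Each coordinate value then lies in a fixed countable set. The well-order on indices carries the labels $\epsilon(\alpha)$ from Section~\ref{sec:ranks}.

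\emph{The color.} Define $\prof(x)$ for $x\ne0$ to record three things:
\begin{enumerate}
\item the size $\sz(x)$;
\item the multiset of pairs (coordinate type, coordinate value up to a simultaneous global sign);
\item the label $\epsilon(\max\supp x)$.
\end{enumerate}
Also record a flag saying whether $x\in M^{\pm}$ and, for $\sz(x)\le2$, the labels $\epsilon$ of both support indices. Profiles are even functions of $x$ and form a countable set. Encode them injectively into $\{1+2^{-n}\}$ and apply Lemma~\ref{lem:encoding} on $E$. Restricting to $H$ keeps right invariance and the $[1,2]$ triangle inequality, and it gives boundedness, uniform discreteness and completeness as before.

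\emph{Rigidity on $H$.} Let $F$ be an isometry of $(H,d|_H)$, and put $F_x(y)=F(x+y)-F(x)$. Each $F_x$ is a color-preserving permutation of $H$ fixing $0$. The key claim is that every color-preserving $f$ on $H$ with $f(0)=0$ satisfies $f(y)\in\{y,-y\}$. Granting the claim, $F(x+y)-F(x)\in[y]$ for all $x,y$, so $F\in\Xi(H)=\Affpm(H)$ by Lemma~\ref{lem:affine}. The converse inclusion is Lemma~\ref{lem:unavoidable}.

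To prove the claim, proceed in three steps.
\begin{enumerate}
\item The flag shows $f(M^{\pm})=M^{\pm}$, and the $\epsilon$-labels are preserved.
\item The differences $\pm b_\alpha\pm b_\beta$ have support size $2$ and carry the pair label $\epsilon(\max\{\alpha,\beta\})$. Mimicking Lemma~\ref{lem:pairs} with Lemma~\ref{lem:ordinal} gives $f(\mathcal M_\alpha)=\mathcal M_\alpha$ for every $\alpha$.
\item For general $y\in H$, compare $\prof(y)$ with $\prof(y-m)$ for $m\in\mathcal M_\alpha$. Since $M\le H$, these differences lie in $H$. From these comparisons read off $\supp(y)$ and each $y_\alpha$ up to sign, and conclude that $f(y)$ has the same support and coordinate values up to sign.
\end{enumerate}

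\emph{Main obstacle.} We expect the hard part to be the sign consistency in step~3: showing that the coordinatewise signs of $f(y)$ agree, so that $f(y)=\pm y$ rather than an arbitrary per-coordinate sign change. We would first try to enrich the profile with relative-sign data between pairs of support coordinates (for instance, whether $y_\alpha/b_\alpha$ and $y_\beta/b_\beta$ have the same sign in the $\Q$ case). That data is invariant under the global sign. The difficulty is torsion coordinates, where "sign" is not an order notion. There we would instead compare $y$ with $y-b_\alpha$ and $y+b_\alpha$ and use the change in support size and coordinate values to pin down each coordinate. If that fails, we would restrict the value set to orbits under $\pm1$ and argue by induction on $\sz(y)$, using two-step differences $y-m-m'$.
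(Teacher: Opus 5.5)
Your setup is sound, but there is a genuine gap in step~3, exactly where you flag it, and it comes from your choice of color.

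\textbf{What works.} A maximal independent family of elements of infinite or prime order generates an essential subgroup, so the hull splits along it. This is a valid variant of Lemma~\ref{lem:hull}. Steps~1--2, and the support-recovery part of step~3 via $\alpha\in\supp(y)\iff\sz(y-b_\alpha)\le\sz(y)$, also go through with your color.

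\textbf{Where it fails.} Your profile stores the coordinate values only as a multiset modulo a global sign, so it forgets which value sits at which index. The single-marker comparisons of step~3 then cannot give $f(y)\in\{y,-y\}$. Take $H=E=(\Z[1/2]/\Z)^4$ with markers $b_\alpha=\tfrac12e_\alpha$, so that $-b_\alpha=b_\alpha$ and $f$ fixes every marker. Put
\[
 y=\tfrac18(1,7,1,7),\qquad z=\tfrac18(7,1,1,7).
\]
All the elements compared below have full support, so they agree on $\sz$, on $\eps(\max\supp)$ and on the flag. In units of $\tfrac18$:
\begin{itemize}
\item $y$ and $z$ both have multiset $\{1,1,7,7\}$.
\item For $\alpha=0,1$, the multisets of $y-b_\alpha$ and $z-b_\alpha$ are $\{1,5,7,7\}$ and $\{1,1,3,7\}$ in some order, which are negatives of each other.
\item For $\alpha=2,3$, the two multisets coincide.
\end{itemize}
So every comparison your step~3 proposes agrees, yet $z=(-y_0,-y_1,y_2,y_3)\notin\{y,-y\}$. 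The most this method yields is a coordinatewise sign, so the sign-consistency obstruction is real, at least at $2$-primary coordinates.

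None of your fallbacks is carried out:
\begin{itemize}
\item Relative-sign data is not an order notion in $\Z[1/2]/\Z$, as you note yourself.
\item Induction on $\sz$ needs elements of smaller support in $H$, but $H$ need not contain coordinate truncations of $y$. Only elements built from $y$ and $M$ are guaranteed to lie in $H$.
\item Two-marker differences $y-m-m'$ would first require controlling $f$ on all of $M$, not just on the markers, plus a case analysis by coordinate type.
\end{itemize}

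\textbf{The missing idea.} Make the profile positional. Let $W(x)$ be the sequence of triples $(\tau_{\alpha_i},\eps(\alpha_i),x_{\alpha_i})$ along the increasing support $\alpha_1<\cdots<\alpha_k$, and set $\prof(x)=\{W(x),W(-x)\}$. This is what the paper does, together with the redundant pair tag $\theta$.

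\begin{itemize}
\item The color is still even and countable.
\item It still contains all the data your steps~1--2 use: support size, the $\eps$-labels, membership in $M^{\pm}$, and $\eps(\max\supp)$.
\item Once the marker probes give $\supp f(y)=\supp y$, the equality $\prof(f(y))=\prof(y)$ says that the coefficient vector of $f(y)$ on this common ordered support equals that of $y$ or that of $-y$. Hence $f(y)\in\{y,-y\}$ immediately.
\end{itemize}

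Negation changes all coordinates at once, so sign consistency is built into the color and torsion coordinates need no separate treatment. With this change, the rest of your argument (normalizing to $F_x$ and applying Lemma~\ref{lem:affine}) coincides with the paper's proof.
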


By the argument of Proposition~\ref{prop:positive}, this single metric on
$E$ has naturalizing restrictions simultaneously on every subgroup
$M\le H\le E$ for which $2H=H$ or $H$ is Boolean.

\subsection{Coordinates and the single ambient metric}\label{sec:coordinates}
An embedding $A\subseteq E$ is \emph{essential} if every nonzero subgroup
of $E$ meets $A$ nontrivially. The existence of injective hulls and the
structure theorem for divisible abelian groups give the following input;
see \cite[Sections~21, 23--24]{Fuchs} for injectivity, divisible-group
structure and divisible hulls. The injective-hull existence statement is
also \cite[Lemma~47.5.2, tag 08Y1]{Stacks}, specialized to $\Z$.

\begin{lemma}[Coordinate markers]\label{lem:hull}
Every nonzero abelian group $A$ has an essential embedding
\[
 E=\bigoplus_{\alpha<\kappa}\mathsf Q_{\tau_\alpha},\qquad
 \mathsf Q_0=\Q,\quad \mathsf Q_p=\Z[1/p]/\Z\quad(p\text{ prime}),
\]
with the summands well-ordered, such that the element $b_\alpha$ supported
at $\alpha$ with coefficient $m_{\tau_\alpha}$ belongs to $A$, where
$m_0=1$ and $m_p=1/p+\Z$.
\end{lemma}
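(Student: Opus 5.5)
The plan is to build $E$ from a well-chosen independent subset of $A$, rather than start from an abstract injective hull and then look for markers. By Zorn's lemma, choose a maximal subset $X\subseteq A$ with two properties: every element of $X$ has infinite order or prime order, and $X$ is independent, meaning $\sum n_xx=0$ with $n_xx$ almost all zero forces every $n_xx=0$. The union of a chain of such sets is again such a set, because independence is a condition on finite subsets. Since $A\neq0$, the set $X$ is nonempty: any nonzero element has a multiple of prime order or is itself of infinite order. Well-order $X$ as $(b_\alpha)_{\alpha<\kappa}$. Put $\tau_\alpha=0$ if $b_\alpha$ has infinite order, and $\tau_\alpha=p$ if $b_\alpha$ has order $p$. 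Then $B=\langle X\rangle=\bigoplus_\alpha\langle b_\alpha\rangle$.

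\emph{Step 1: $B$ is essential in $A$.} Let $0\neq a\in A$. If $\langle a\rangle\cap B=0$, replace $a$ by a nonzero multiple $a'$ of prime order, or keep $a'=a$ if $a$ has infinite order. We still have $\langle a'\rangle\cap B=0$, so $X\cup\{a'\}$ is independent with the required orders. This contradicts maximality.

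\emph{Step 2: embed $B$ in $E$.} Set $E=\bigoplus_{\alpha<\kappa}\mathsf Q_{\tau_\alpha}$, which is divisible. Define $\iota:B\to E$ by sending $b_\alpha$ to the element supported at $\alpha$ with coefficient $m_{\tau_\alpha}$. This is well defined and injective, since $m_p=1/p+\Z$ has order exactly $p$ and $m_0=1$ has infinite order.

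We also need $\iota(B)$ essential in $E$. Take $0\neq e\in E$ with finite support. Go through the support one coordinate at a time, multiplying $e$ by integers. A $\Q$-coordinate becomes a nonzero integer after clearing its denominator. A $\Z[1/p]/\Z$-coordinate becomes an element of order $p$, which is a nonzero multiple of $m_p$, after multiplying by a suitable power of $p$. Each step keeps the element nonzero: we either clear a coordinate to zero while some other coordinate stays nonzero, or we choose the multiplier to be the exact order needed. In this way we reach a nonzero element of $\iota(B)$. The only care needed is the ordering of these multiplications, namely handling torsion-free coordinates first and never killing the last nonzero coordinate; this is the one slightly fiddly point.

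\emph{Step 3: extend to $A$.} Divisible groups are injective, so $\iota$ extends to a homomorphism $\bar\iota:A\to E$. Its kernel meets $B$ trivially, so Step 1 makes $\bar\iota$ injective. Its image contains $\iota(B)$, which is essential in $E$ by Step 2, so $\bar\iota(A)$ is essential in $E$. Identifying $A$ with its image gives the required essential embedding, and each $b_\alpha\in A$ is the prescribed marker.

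I expect no real obstacle. The only substantive points are the maximality argument in Step 1 and the coordinatewise multiplication in Step 2. The cited structure theory of divisible groups is used only through the injectivity of divisible groups.
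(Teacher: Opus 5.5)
Your proof is correct, but it runs in the opposite direction from the paper's.

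\textbf{The paper's route.} It takes an essential divisible hull of $A$ as given and uses the structure theorem to split it into copies of $\Q$ and $\Z[1/p]/\Z$. It then uses essentiality to find the markers inside $A$. Each summand meets $A$ nontrivially. A rational summand is rescaled so that a chosen nonzero element of the intersection becomes $1$. A Pr\"ufer summand's nonzero intersection with $A$ automatically contains its socle $\langle 1/p+\Z\rangle$.

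\textbf{Your route.} You build the hull from inside $A$. A maximal independent system of elements of infinite or prime order gives an essential subgroup $B=\bigoplus\langle b_\alpha\rangle$. The summands and the markers are read off from this system at the same time, so no rescaling is needed. The image $\iota(B)$ is visibly essential in $E$, and injectivity of divisible groups extends $\iota$ to an embedding of $A$.

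\textbf{Comparison.} Your argument is essentially the standard existence proof of divisible hulls. It buys self-containedness: it uses only injectivity of divisible groups, not the existence of injective hulls or the classification of divisible groups. It also makes explicit that the index set is a maximal independent system, so its size is the torsion-free rank plus the $p$-ranks. The paper's proof is shorter because it delegates exactly these facts to Fuchs.

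\textbf{A simplification of Step~2.} The ``fiddly'' ordering can be avoided. If $e$ has infinite order, multiply it by $D n$, where $D$ is a common denominator of its rational coordinates and $n$ is the order of its torsion part. This gives a nonzero element of $\iota(B)$. If $e$ is torsion, some multiple of it has prime order $q$. Every element of order $q$ in $E$ lies in the direct sum of the socles $\langle m_q\rangle$ of the summands with $\tau_\alpha=q$, and that sum is contained in $\iota(B)$.
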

\begin{proof}
Choose an essential divisible hull and decompose it into rational and
Pr\"ufer summands. Essentiality makes each summand meet $A$ nontrivially.
For a rational summand, rescale its identification with $\Q$ so that one
nonzero element of this intersection has coordinate $1$. Every nonzero
subgroup of a Pr\"ufer $p$-group contains its unique subgroup of order
$p$, so the distinguished coefficient $1/p+\Z$ is available.
\end{proof}
The markers need not generate $A$, and $A$ need not split along the
ambient summands. Every element nevertheless has a unique finite-support
coordinate expansion in $E$.

Fix these coordinates and markers. Define $\eps$ on the coordinate
index ordinal by the successor and limit rules of Section~\ref{sec:ranks}.
Put $\sz(x)=|\supp(x)|$, so $\sz(0)=0$. For $x\ne0$, list
$\supp(x)=\{\alpha_1<\cdots<\alpha_{\sz(x)}\}$.
Define
\[
 W(x)=\bigl((\tau_{\alpha_i},\eps(\alpha_i),x_{\alpha_i})\bigr)_{i=1}^{\sz(x)},
 \qquad \prof(x)=\{W(x),W(-x)\}.
\]
The actual ordinal indices are omitted but their order is retained.
Negation changes all coefficients simultaneously, not independently.
Set
\[
 \theta(x)=
 \begin{cases}
 \eps(\beta),&\supp(x)=\{\alpha,\beta\},\quad\alpha<\beta,\\
 *,&\sz(x)\ne2,
 \end{cases}
 \qquad C_{\mathrm{coord}}(x)=(\prof(x),\theta(x)).
\]
The word alphabet
\[
 \coprod_{\tau\in\{0\}\cup\{p:p\text{ prime}\}}
 \{\tau\}\times\{0,1\}\times(\mathsf Q_\tau\setminus\{0\})
\]
is countable. Finite words, their simultaneous-sign classes, and the
three-valued tag therefore give countably many colors. Choose one
injective numbering $\nu$ of these colors and, on all of $E$, define
\[
 L(0)=0,\qquad L(x)=1+2^{-\nu(C_{\mathrm{coord}}(x))}\quad(x\ne0),\qquad
 d(x,y)=L(x-y).
\]
Since $C_{\mathrm{coord}}(-x)=C_{\mathrm{coord}}(x)$, Lemma~\ref{lem:encoding} makes this a
translation-invariant metric. The choices of $\nu$ and $d$ depend only
on the fixed hull coordinates and markers, not on an intermediate
subgroup $H$. The distance recovers the length
of the profile word, so every isometry of any restriction satisfies
\begin{equation}\label{eq:support-size}
 \sz(f(x)-f(y))=\sz(x-y).
\end{equation}
Zero distance encodes zero support size.

\subsection{Recovery on every intermediate subgroup}\label{sec:rigidity}
Fix $M\le H\le E$. All maps and translated points in the following
argument remain in $H$; no extension of an isometry to $E$ is assumed.

\begin{lemma}[Marker and support recovery]\label{lem:support}
Fix $E$, the markers and $d$ as in Section~\ref{sec:coordinates}, and
let $M\leq H\leq E$. If $f\in\Isom(H,d|_{H\times H})$ fixes zero,
then for every $\alpha<\kappa$ and $x\in H$,
\[
 f(\{b_\alpha,-b_\alpha\})=\{b_\alpha,-b_\alpha\},\qquad
 \supp(f(x))=\supp(x).
\]
\end{lemma}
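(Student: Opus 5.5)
The plan is to read everything from distances to $0$ and from pairwise differences, using only the fact that an isometry $f$ of $H$ fixing zero satisfies $C_{\mathrm{coord}}(f(x)-f(y))=C_{\mathrm{coord}}(x-y)$. This holds by injectivity of $\nu$, exactly as in Lemma~\ref{lem:encoding}. In particular $\prof(f(x))=\prof(x)$ and $\theta(f(x)-f(y))=\theta(x-y)$, together with \eqref{eq:support-size}. The strategy has two parts: first show that $f$ induces a permutation of the coordinate index set $\kappa$ that preserves the labels of Lemma~\ref{lem:ordinal}, so this permutation is the identity; then recover arbitrary supports by testing against markers.

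\emph{Step 1: the induced map on coordinates.} Let $H_1=\{x\in H:\sz(x)=1\}$. By \eqref{eq:support-size} with $y=0$, $f(H_1)=H_1$. For $x,y\in H_1$, the supports of $x$ and $y$ coincide if and only if $\sz(x-y)\le1$, and this condition is preserved by \eqref{eq:support-size}. So $f$ induces a well-defined injective map $\phi$ on the set of coordinates that carry a support-one element of $H$. Since $b_\alpha\in M\le H$, this set is all of $\kappa$. Applying the same construction to $f^{-1}$, which is also an isometry of $H$ fixing zero, gives the inverse of $\phi$, so $\phi$ is a permutation of $\kappa$.

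\emph{Step 2: $\phi$ is the identity.} The profile of a support-one element records $\eps$ of its coordinate, so $\eps(\phi(\alpha))=\eps(\alpha)$. For $\alpha\ne\beta$ we have $\supp(b_\alpha-b_\beta)=\{\alpha,\beta\}$, so $\theta(b_\alpha-b_\beta)=\eps(\max\{\alpha,\beta\})$. The images $f(b_\alpha)$ and $f(b_\beta)$ are supported at $\phi(\alpha)$ and $\phi(\beta)$, so their difference has support $\{\phi(\alpha),\phi(\beta)\}$. Preservation of $\theta$ therefore gives $\eps(\max\{\phi(\alpha),\phi(\beta)\})=\eps(\max\{\alpha,\beta\})$. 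Thus $\phi$ preserves both the vertex labels and the pair labels, and Lemma~\ref{lem:ordinal} forces $\phi=\id$.

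\emph{Step 3: the marker pairs.} By Step 2, $f(b_\alpha)$ is supported at $\alpha$. Its profile equals $\{W(b_\alpha),W(-b_\alpha)\}$, so its coefficient is $\pm m_{\tau_\alpha}$, that is, $f(b_\alpha)\in\{b_\alpha,-b_\alpha\}$. The same argument applies to $-b_\alpha$. Injectivity of $f$ then gives $f(\{b_\alpha,-b_\alpha\})=\{b_\alpha,-b_\alpha\}$.

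\emph{Step 4: supports of arbitrary elements.} For any $y\in E$ and any $c$ supported exactly at $\alpha$, we have $\alpha\notin\supp(y)$ if and only if $\sz(y-c)=\sz(y)+1$. Take $c=b_\alpha$ and $y=x$, and compare with $c=f(b_\alpha)=\pm b_\alpha$ and $y=f(x)$, using \eqref{eq:support-size} twice. This shows that $\alpha\in\supp(x)$ if and only if $\alpha\in\supp(f(x))$.

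I expect the main obstacle to be Step 1. The point is that a permutation is obtained on all of $\kappa$, not merely an injection, and surjectivity rests on passing to $f^{-1}$ and on every coordinate carrying a marker in $H$. Once this is settled, the rest reduces directly to Lemma~\ref{lem:ordinal}.
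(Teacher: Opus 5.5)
Your proof is correct and is essentially the paper's argument: the same application of Lemma~\ref{lem:ordinal} to the vertex labels $\eps(\alpha)$ and the pair tags $\theta(b_\alpha-b_\beta)=\eps(\max\{\alpha,\beta\})$, followed by the same marker-subtraction probe on support sizes. The only difference is that you build the coordinate permutation from all support-one elements of $H$, which is why you need the separate surjectivity check via $f^{-1}$; the paper instead first recognizes $M^{\pm}$ directly from radial profiles and lets $f$ permute the classes $\mathcal M_\alpha=\{b_\alpha,-b_\alpha\}$, so the marker statement comes out at the same time.
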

\begin{proof}
The set $M^{\pm}=\bigcup_{\alpha<\kappa}\{b_\alpha,-b_\alpha\}$ is recognized from
radial profiles: it consists of one-coordinate elements of type $\tau$
with coefficient $m_\tau$ or $-m_\tau$. Hence $f(M^{\pm})=M^{\pm}$. For $u,v\in M^{\pm}$, the
relation $u=v$ or $\sz(u-v)=1$ has precisely the classes
$\mathcal M_\alpha=\{b_\alpha,-b_\alpha\}$, including singletons for order-two
markers. Indeed, distinct coordinates give support size two, and distinct
opposites at the same coordinate give support size one. By
\eqref{eq:support-size}, $f$ permutes these classes.

The radial profile of $\mathcal M_\alpha$ records $\eps(\alpha)$. For
$u\in \mathcal M_\alpha$, $v\in \mathcal M_\beta$ with $\alpha\ne\beta$, the tag records
$\eps(\max\{\alpha,\beta\})$. Lemma~\ref{lem:ordinal} fixes every class
individually, also when there is only one class.

For any $x\in H$ and any $\alpha$,
\begin{equation}\label{eq:probe}
 \alpha\in\supp(x)\quad\Longleftrightarrow\quad
 \sz(x-b_\alpha)\le \sz(x).
\end{equation}
If the coordinate is absent, subtracting the marker adds it; if present,
subtraction either preserves or deletes it. The same statement holds
with $-b_\alpha$ in place of $b_\alpha$. Since
$f(b_\alpha)=\pm b_\alpha$, equation~\eqref{eq:support-size} gives
\[
 \sz(f(x))=\sz(x),\qquad
 \sz(f(x)-f(b_\alpha))=\sz(x-b_\alpha).
\]
Applying \eqref{eq:probe} on both sides proves equality of the supports.
\end{proof}

\begin{proof}[Proof of Theorem~\ref{thm:coordinate}]
For nonzero $A$, construct $E,M,d$ as above. Fix any $M\le H\le E$ and an isometry $f$ of its restricted
metric fixing zero. For $x\ne0$, support recovery identifies the actual
ordered support of $f(x)$ with that of $x$. Equality of the radial
profiles $\prof(f(x))=\prof(x)$ then implies $f(x)\in\{x,-x\}$.

For an arbitrary isometry $F$ of $H$, each map
$F_y(z)=F(y+z)-F(y)$ is an isometry of $H$ fixing zero. The preceding
conclusion therefore gives
\[
 F(x)-F(y)\in\{x-y,-(x-y)\}\quad(x,y\in H).
\]
Thus $F\in\Xi(H)=\Affpm(H)$ by Lemma~\ref{lem:affine}.
Conversely, translations and translated inversions preserve every
restriction of this even invariant metric. The choices of coordinates,
colors, $\nu$, and $d$ have not depended on $H$, proving the simultaneous
assertion. The common positive lower bound on nonzero distances gives
uniform discreteness and completeness of every restriction; boundedness
and the fixed countable distance set follow from the construction.
\end{proof}

\begin{remark}[The simultaneous restriction property]\label{rem:extension}
The quantifiers are: there exists one metric $d$ on the fixed $E$ such
that, for every $M\le H\le E$, its restriction has isometry group
$\Affpm(H)$. This is stronger than choosing unrelated metrics for
individual subgroups. Each isometry $x\mapsto c\pm x$ on such an $H$
then extends by the same formula to an isometry of this $(E,d)$, since
$c\in H\subseteq E$. This extension is a consequence of the simultaneous
ambient construction, not a separate rigidity claim.
\end{remark}

\section{Signed-basis constructions over prime fields}\label{app:signed}\label{sec:refinements}

The additive groups considered here are natural by
Corollary~\ref{cor:fields}. We retain the signed-basis construction for
its direct recovery of scalar multiples and its sharper bounds at small
odd primes. The binary vertex and pair labels are those of
Lemma~\ref{lem:ordinal}; no additional ordinal rigidity result is needed.
The characteristic-two construction below uses six values, while
Proposition~\ref{prop:smallbounds} gives the stronger bound of
five.

\begin{theorem}[Dimension-independent distance bounds]\label{thm:basis}
Let $k$ be $\Q$ or a prime field, and let $V$ be a $k$-vector space
of arbitrary dimension. There is a translation-invariant metric
$d:V\times V\to[0,2]$ such that
\[
\Isom(V,d)=\Affpm(V).
\]
For $k=\Q$ the metric is countable-valued. For $k=\mathbb F_p$ with $p$
odd, it takes at most $p+5$ values. For $k=\mathbb F_2$, it takes
at most six values and $\Affpm(V)=\T(V)$. All these metrics are
bounded, uniformly discrete, and complete. The bounds include zero and
are independent of the dimension.
\end{theorem}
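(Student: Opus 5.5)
We would build the metric from a fixed basis and read off each element's coefficients relative to the signed basis $\mathcal U=\{\pm v_\alpha\}$. Fix a basis $(v_\alpha)_{\alpha<\kappa}$ indexed by an ordinal and let $\epsilon$ be the binary label of Section~\ref{sec:ranks}. The color of $x\ne0$ will record only a bounded amount of information:
\begin{itemize}
\item whether $\sz(x)=1$, meaning $x$ is a nonzero scalar multiple of one basis vector;
\item if so, the scalar class $\Lambda$ of that multiple modulo sign, together with $\epsilon(\alpha)$;
\item whether $\sz(x)=2$, and if so the label $\epsilon$ of the larger coordinate;
\item a default color otherwise.
\end{itemize}
Over $\F_p$ with $p$ odd there are $(p-1)/2$ scalar classes up to sign. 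We would record the vertex label only for the class of $\pm1$ and use coarser colors for the other multiples; the aim is a count of at most $p+4$ nonzero colors. In characteristic two there are no nontrivial scalar classes, and five nonzero colors suffice. Over $\Q$ we let $\Lambda$ range over $\Q_{>0}$, which gives countably many colors. Lemma~\ref{lem:encoding} then produces a bounded, uniformly discrete, complete translation-invariant metric.

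The rigidity argument follows the proof of Lemma~\ref{lem:support}. Let $f$ be an isometry fixing $0$.
\begin{enumerate}
\item The radial color identifies $\mathcal U$, so $f(\mathcal U)=\mathcal U$.
\item For $u,v\in\mathcal U$, the difference $u-v$ has support size one exactly when $u=\pm v$. Hence $f$ permutes the signed pairs $\{\pm v_\alpha\}$.
\item The vertex labels $\epsilon(\alpha)$ and the pair labels $\epsilon(\max\{\alpha,\beta\})$ are preserved, so Lemma~\ref{lem:ordinal} fixes every signed pair.
\item Support size is encoded as one, two, or "other". We still need $\sz(f(x))=\sz(x)$ for every $x$ in order to run the probe~\eqref{eq:probe}. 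So we would prove support recovery by induction on support size: $\alpha\in\supp(x)$ exactly when some multiple $c v_\alpha$ makes $\sz(x-cv_\alpha)<\sz(x)$, and $\pm v_\alpha$ are fixed up to sign.
\end{enumerate}

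Step 4 is the main obstacle. The finite palette cannot distinguish support sizes at least three, so a global induction is not available. We would instead work locally and apply translation-normalized maps $F_y(z)=F(y+z)-F(y)$. The aim is to show that every isometry $F$ satisfies
\[
 F(y+cv_\alpha)-F(y)=\pm cv_\alpha \qquad\text{for all } y,\alpha,c.
\]
The scalar-class colors give this up to a scalar in the class of $c$, and the two-coordinate tag pins the basis index. Once this "steps are signed basis multiples" property holds, we follow Lemma~\ref{lem:abelian-short}. Fix $a=v_0$ when $2a\ne0$. The sign $\sigma(y)$ with $F_y(a)=\sigma(y)a$ propagates along single-coordinate steps, and support-size-two comparisons force $F_y(cv_\alpha)=\sigma(y)cv_\alpha$. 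Every vector is a finite sum of such steps, so telescoping gives $F(x)=F(0)+\sigma x$, and Lemma~\ref{lem:affine} identifies the isometry group as $\Affpm(V)$.

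For $\F_2$ the signs are trivial and the same argument yields $T(V)$. The remaining work is to verify that each separate case fits the stated counts $p+5$ and $6$, including zero.
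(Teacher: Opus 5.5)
There is a genuine gap in characteristic $\ne 2$, including $k=\Q$. Your two-coordinate color records only that $\sz(x)=2$ and the label $\epsilon$ of the larger index, so it cannot see the relative sign of the two coefficients. Steps 1--3 therefore give only $f(v_\alpha)=\sigma_\alpha v_\alpha$ with a sign $\sigma_\alpha$ depending on $\alpha$. Nothing in your palette ties these signs together, and they really are independent.

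Take $\dim V\ge 2$ and the linear map $x\mapsto\sum_\alpha\sigma_\alpha x_\alpha v_\alpha$ with $\sigma_0=1$ and $\sigma_1=-1$. It preserves supports, the scalar class up to sign of every single-coordinate vector, and all $\epsilon$-labels, so it preserves every color of differences. It fixes $0$ and is neither $\id$ nor $-\id$, so your metric has isometries outside $\Affpm(V)$.

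The step that breaks is your claim that ``support-size-two comparisons force $F_y(cv_\alpha)=\sigma(y)cv_\alpha$.'' The argument of Lemma~\ref{lem:abelian-short} uses preservation of the exact inverse pair $[s-a]$ to get $f(s)\in\{s,-s\}\cap\{s,2a-s\}$. In your coloring, however, $a-cv_\alpha$ and $a+cv_\alpha$ receive the same color.

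The missing ingredient is the one the paper builds into the metric. Split the two-coordinate colors according to whether the vector is $\pm(v_\alpha-v_\beta)$ or $\pm(v_\alpha+v_\beta)$ with $\alpha<\beta$, each still tagged by $\epsilon(\beta)$. Send every other two-support vector to the default value; this is well defined because $v_\alpha+v_\beta\ne\pm(v_\alpha-v_\beta)$ when $2\ne 0$. Then $\sigma_\alpha\ne\sigma_\beta$ would turn the difference length $d(v_\alpha,v_\beta)$ into a sum length, so all signs agree; this is Lemma~\ref{lem:signed-basis}.

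After that, your remaining steps are exactly Lemma~\ref{lem:scalar} and Proposition~\ref{prop:basis-isometries}: pin the index of $f(cv_\alpha)$ by comparison with $v_\alpha$, exclude the wrong sign via $[c-1]\ne[c+1]$, and propagate $\sigma$ along single-coordinate steps. In particular no support recovery is needed, so your Step~4 obstacle disappears. The split costs two extra colors. With $\epsilon(\alpha)$ attached to every scalar class, the count is $(p-1)+4+1+1=p+5$ values including zero.

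In characteristic two your coloring coincides with the paper's six-valued $d_2$, and your argument there is correct.
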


\subsection{Characteristic different from two}
Suppose $V\ne0$. Let $k=\Q$ or $\mathbb F_p$ with $p$ odd, and choose a well-ordered basis
$\{v_\alpha:\alpha<\kappa\}$ of $V$. For $q\in k^\times$, write
$[q]=\{q,-q\}$. Choose an injection
\begin{equation}\label{eq:lambda}
\Lambda:\{0,1\}\times(k^\times/\{\pm1\})\longrightarrow(1,4/3).
\end{equation}
Only the binary label and the scalar class are labeled individually; indices
with the same label reuse the same lengths. Set
\[
\delta_{0,0}=\frac32,\qquad \delta_{0,1}=\frac{13}{8},\qquad
\delta_{1,0}=\frac74,\qquad \delta_{1,1}=\frac{15}{8}.
\]
Define $L:V\to[0,2]$ by
\begin{align}
L(0)&=0, \nonumber\\
L(qv_\alpha)&=\Lambda(\eps(\alpha),[q])
    &&(q\in k^\times),\label{eq:basis-single}\\
L\bigl(\pm(v_\alpha-v_\beta)\bigr)&=\delta_{\eps(\beta),0}
    &&(\alpha<\beta),\label{eq:basis-difference}\\
L\bigl(\pm(v_\alpha+v_\beta)\bigr)&=\delta_{\eps(\beta),1}
    &&(\alpha<\beta),\label{eq:basis-sum}
\end{align}
and give every remaining vector length $2$. Put
\begin{equation}\label{eq:basis-metric}
d(x,y)=L(x-y).
\end{equation}

\begin{lemma}\label{lem:basis-metric}
The function $L$ is well-defined and even. The metric in
\eqref{eq:basis-metric} is translation invariant and has the bounds
stated in Theorem~\ref{thm:basis} for $k=\Q$ and odd characteristic.
\end{lemma}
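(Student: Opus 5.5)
The proof has three parts: well-definedness of $L$, evenness, and then the metric properties together with the value count via Lemma~\ref{lem:encoding}.

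\emph{Well-definedness.} The main point is that the three defining families in \eqref{eq:basis-single}--\eqref{eq:basis-sum} are pairwise disjoint, and that within each family the assigned value does not depend on the chosen description of the vector. Since $\{v_\alpha\}$ is a basis, every nonzero vector has a unique finite support and unique coefficients.
\begin{itemize}
\item Vectors $qv_\alpha$ have support size one. Vectors $\pm(v_\alpha\mp v_\beta)$ with $\alpha\ne\beta$ have support size two, so the singleton family is disjoint from the other two.
\item A difference $\pm(v_\alpha-v_\beta)$ has coefficients of opposite sign, while a sum $\pm(v_\alpha+v_\beta)$ has coefficients of equal sign. These are distinct because $1\ne-1$ in characteristic $\ne2$. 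So the difference and sum families are disjoint.
\item Within a family, the support $\{\alpha<\beta\}$ is determined by the vector, and hence so is $\epsilon(\beta)$. For a singleton, $qv_\alpha$ determines $\alpha$ and $q$, and hence the class $[q]$.
\end{itemize}

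\emph{Evenness.} Negating $qv_\alpha$ replaces $q$ by $-q$, which leaves $[q]$ unchanged. The two-coordinate families are defined with $\pm$ built in, so they are closed under negation, and the complement of the union is therefore closed under negation as well. Hence $L(-x)=L(x)$.

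\emph{Metric properties and value count.} The function $L$ vanishes only at $0$ and takes values in $(1,2]$ elsewhere. Taking the color of $x\ne0$ to be $L(x)$ itself, with $\mu$ the identity, Lemma~\ref{lem:encoding} shows that $d(x,y)=L(x-y)$ is a translation-invariant metric. Alternatively one can rerun the $[1,2]$ triangle argument directly: for three distinct points, two distances sum to more than $2$, while any single distance is at most $2$.

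The value count is as follows. The image consists of $0$, at most $|\{0,1\}\times k^\times/\{\pm1\}|$ values of $\Lambda$, the four values $\delta_{i,j}$, and $2$. The values of $\Lambda$ lie in $(1,4/3)$, which is disjoint from $\{3/2,13/8,7/4,15/8,2\}$.
\begin{itemize}
\item For $k=\mathbb F_p$ with $p$ odd, $|k^\times/\{\pm1\}|=(p-1)/2$, so $\Lambda$ contributes at most $p-1$ values. The total is at most $1+(p-1)+4+1=p+5$.
\item For $k=\Q$ the index set of $\Lambda$ is countable, so an injection into $(1,4/3)$ exists and the image of $d$ is countable.
\end{itemize}

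\emph{Remaining metric claims.} Boundedness is immediate since $d\le2$. All nonzero distances exceed $1$, so $d$ is uniformly discrete. Consequently every Cauchy sequence is eventually constant, which gives completeness.

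The only step requiring genuine care is the disjointness of the sum and difference families in odd characteristic and over $\Q$. It reduces to $2\ne0$ in $k$, so I do not expect a real obstacle here.
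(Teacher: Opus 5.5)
Your proposal is correct and follows essentially the same route as the paper: the zero, one-support, and two-support cases are disjoint, sums are separated from differences using $2\neq0$, evenness comes from the built-in $\pm$ and $[q]=[-q]$, the triangle inequality follows from all nonzero distances lying in $(1,2]$, and the count is $(p-1)+4+1+1=p+5$. The only differences are cosmetic: you optionally invoke Lemma~\ref{lem:encoding} with the identity color map, and you include boundedness, uniform discreteness, and completeness, which the paper records in the proof of Theorem~\ref{thm:basis} instead.
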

\begin{proof}
The zero, one-coordinate-support, and two-coordinate-support cases are
disjoint. A vector in the last case determines its two basis indices
uniquely. Since the characteristic is not two, a sum of two distinct
basis vectors is not, up to sign, their difference. Thus the assignments
are consistent and unchanged under negation; the remaining set is also
closed under negation.

Symmetry, separation, and translation invariance follow. All nonzero
distances belong to $(1,2]$, so the triangle inequality holds. The label
set in \eqref{eq:lambda} is countable. Over $\mathbb F_p$ there are
$(p-1)/2$ nonzero scalar classes and two binary labels, hence at most
$p-1$ singleton-support labels. Adding the four $\delta$-values, the
default value $2$, and zero gives
\[
 \frac{p-1}{2}\cdot2+4+1+1=p+5.
\]
\end{proof}

\begin{lemma}[Signed-basis recovery]\label{lem:signed-basis}
If $f$ is an isometry of \eqref{eq:basis-metric} fixing zero, there is one
$\sigma\in\{1,-1\}$ such that
\[
f(v_\alpha)=\sigma v_\alpha,\qquad
f(-v_\alpha)=-\sigma v_\alpha\quad(\alpha<\kappa).
\]
\end{lemma}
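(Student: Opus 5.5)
Since $f$ fixes zero and $d(x,y)=L(x-y)$, we have $L(f(x))=d(f(x),0)=L(x)$ for every $x$, and more generally $L(f(x)-f(y))=L(x-y)$. We use only these two identities and the disjointness of the value ranges: singleton-support values lie in $(1,4/3)$, the four $\delta$-values lie in $[3/2,15/8]$, and the default value is $2$.

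First we recover the signed basis setwise. The set $\mathcal U=\{\pm v_\alpha\}$ consists of exactly the vectors $qv_\alpha$ with $[q]=[1]$. By injectivity of $\Lambda$ it is recognized by $L$-values: $L(x)=\Lambda(\epsilon,[1])$ for some $\epsilon$ if and only if $x\in\mathcal U$. Hence $f(\mathcal U)=\mathcal U$, and $f$ preserves the binary label $\epsilon(\alpha)$ of each $\pm v_\alpha$.

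Next we define pair classes on $\mathcal U$. For $u\neq u'$ in $\mathcal U$, the difference $u-u'$ has singleton support (namely $\pm2v_\alpha$) exactly when $u'=-u$. In odd characteristic and over $\Q$, the scalar class $[2]$ differs from $[1]$, and $2\ne0$, so this case is detected by the value $\Lambda(\cdot,[2])$. Otherwise $u,u'$ lie at distinct indices, and $u-u'$ is $\pm(v_\alpha\mp v_\beta)$, whose value is a $\delta$-value. Hence $f$ permutes the classes $\{v_\alpha,-v_\alpha\}$. The cross value $\delta_{\epsilon(\max),\ast}$ records $\epsilon(\max\{\alpha,\beta\})$, and the radial value records $\epsilon(\alpha)$. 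By Lemma~\ref{lem:ordinal}, every class is fixed. So $f(v_\alpha)=\sigma_\alpha v_\alpha$ and $f(-v_\alpha)=-\sigma_\alpha v_\alpha$, where the second follows because $f$ is injective on the two-element class.

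The main step is making the sign $\sigma_\alpha$ independent of $\alpha$. For $\alpha<\beta$ we have $L(v_\alpha-v_\beta)=\delta_{\epsilon(\beta),0}$, while
\[
L(f(v_\alpha)-f(v_\beta))=L(\sigma_\alpha v_\alpha-\sigma_\beta v_\beta).
\]
If $\sigma_\alpha\ne\sigma_\beta$, this vector is $\pm(v_\alpha+v_\beta)$, with value $\delta_{\epsilon(\beta),1}\neq\delta_{\epsilon(\beta),0}$. This is a contradiction, so $\sigma_\alpha=\sigma_\beta$. The point to watch is that the sum/difference tag of \eqref{eq:basis-difference}--\eqref{eq:basis-sum} is exactly what distinguishes these two cases; this is why four $\delta$-values are used. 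When $\kappa=1$ the sign is trivially a single $\sigma$.
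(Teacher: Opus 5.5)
Your proof is correct and follows the paper's argument essentially step for step. You recognize $\mathcal U$ by the radial levels $\Lambda(\cdot,[1])$, detect opposite pairs by their short within-pair distance, fix each pair with Lemma~\ref{lem:ordinal} using the radial and cross labels, and then use the sum/difference tag in the $\delta$-values to make the signs agree. One small slip: your remark that $[2]\ne[1]$ is false for $p=3$, where $2=-1$. It is not needed, because only $2\ne0$ is used to place $L(\pm2v_\alpha)$ in $(1,4/3)$, below every $\delta$-value.
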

\begin{proof}
The signed basis $\mathcal U=\{v_\alpha,-v_\alpha:\alpha<\kappa\}$ is the union of
the radial levels with lengths $\Lambda(0,[1])$ and $\Lambda(1,[1])$,
omitting an unused label. Thus $f$ preserves $\mathcal U$ and the binary labels
of its elements. Among distinct points of $\mathcal U$, distance less than $3/2$ occurs
exactly within an opposite pair: its difference is $2v_\alpha$, whereas a
difference between distinct pairs has one of the four $\delta$-lengths.
Hence $f$ permutes the opposite pairs.

For $\alpha<\beta$, the set of cross-distances between the two opposite
pairs is
\[
\{\delta_{\eps(\beta),0},\delta_{\eps(\beta),1}\}.
\]
The two possible sets are disjoint, so they recover the pair label
$\eps(\max\{\alpha,\beta\})$. Together with the radial label
$\eps(\alpha)$ on each opposite pair, these are precisely the data in
Lemma~\ref{lem:ordinal}. It fixes every pair individually, also when
there is only one pair.

Write $f(v_\alpha)=\sigma_\alpha v_\alpha$; the other point of the pair
must map to $-\sigma_\alpha v_\alpha$. If
$\sigma_\alpha\ne\sigma_\beta$, the distance between $v_\alpha$ and
$v_\beta$ changes from a difference length to a sum length, contrary to
\eqref{eq:basis-difference}--\eqref{eq:basis-sum}. Thus all signs agree.
\end{proof}

\begin{lemma}[Scalar recovery]\label{lem:scalar}
Under the hypotheses of Lemma~\ref{lem:signed-basis},
\[
f(qv_\alpha)=\sigma qv_\alpha\qquad(q\in k,\ \alpha<\kappa).
\]
\end{lemma}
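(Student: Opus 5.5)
The plan is to recover $f(qv_\alpha)$ in two stages. First I show that its \emph{support} is still $\{\alpha\}$ and that its scalar class is still $[q]$. Then I fix the sign by measuring its distance to the already-controlled point $f(v_\alpha)=\sigma v_\alpha$. The case $q=0$ is trivial because $f(0)=0$, so assume $q\neq0$.

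\emph{Step 1: radial level.} The distance $d(0,qv_\alpha)=L(qv_\alpha)=\Lambda(\eps(\alpha),[q])$ lies in $(1,4/3)$. Every vector with that length has one-coordinate support, since the $\delta$-values and the default $2$ are all at least $3/2$. Injectivity of $\Lambda$ then shows that $f(qv_\alpha)=r v_\gamma$ for some index $\gamma$ and some scalar $r\in[q]$.

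\emph{Step 2: support.} To force $\gamma=\alpha$, I compare distances to basis vectors, using $f(\pm v_\beta)=\pm\sigma v_\beta$ from Lemma~\ref{lem:signed-basis}. For $\beta\ne\alpha$, the difference $qv_\alpha-v_\beta$ has two-coordinate support, so its length is either a $\delta$-value or $2$, hence at least $3/2$. After applying $f$, the distance becomes $L(rv_\gamma-\sigma v_\beta)$. If $\gamma=\beta$, this vector has one-coordinate support or is zero, so its length is below $4/3$, which is a contradiction. So $\gamma\neq\beta$ for every $\beta\neq\alpha$; this requires at least one such $\beta$, and otherwise $\gamma=\alpha$ is automatic. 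Hence $\gamma=\alpha$.

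\emph{Step 3: sign.} Write $f(qv_\alpha)=\eta\sigma qv_\alpha$ with $\eta\in\{\pm1\}$. Compare $d(qv_\alpha,v_\alpha)=L((q-1)v_\alpha)$ with $d(\eta\sigma qv_\alpha,\sigma v_\alpha)=L((\eta q-1)v_\alpha)$. If $q=\pm1$, the conclusion is already given by Lemma~\ref{lem:signed-basis}, so assume $q\ne\pm1$. Then $q-1\neq0$, so that distance is a $\Lambda$-value, and the two distances must agree. Suppose $\eta=-1$. Then injectivity of $\Lambda$ requires $[q-1]=[-q-1]=[q+1]$, so either $q-1=q+1$ or $q-1=-q-1$, giving $2=0$ or $q=0$. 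Both are impossible in characteristic $\neq2$ with $q\ne0$, so $\eta=1$.

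The main obstacle is Step 2. It relies on a strict separation between one-support lengths, which lie in $(1,4/3)$, and two-support or default lengths, which are at least $3/2$, and on the observation that a combination of vectors from the same coordinate stays one-supported. I would also check the degenerate case $\kappa=1$ separately, where no $\beta\ne\alpha$ exists but the support is forced to be $\{\alpha\}$ anyway.
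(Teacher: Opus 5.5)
Your proof is correct and follows essentially the same route as the paper: the radial level forces a one-coordinate image in the scalar class $[q]$, the gap between one-support lengths in $(1,4/3)$ and two-support lengths of at least $3/2$ pins the coordinate, and injectivity of $\Lambda$ turns a sign flip into $[q-1]=[q+1]$. The only difference is in pinning the coordinate: the paper normalizes $\sigma=1$ and uses the distance to $v_\alpha$ itself, which is already one-supported once $q\ne0,\pm1$, so that one comparison point serves for both the support and the sign, whereas you test against $v_\gamma$.
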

\begin{proof}
Compose with inversion if necessary so that $\sigma=1$. The cases
$q=0,1,-1$ are known. Otherwise the radial label gives
$f(qv_\alpha)=\tau qv_\beta$ for some $\tau\in\{1,-1\}$ with
$\eps(\beta)=\eps(\alpha)$. The distance from $qv_\alpha$ to $v_\alpha$ is
less than $3/2$. If $\beta\ne\alpha$, the image difference has two-coordinate
support and length at least $3/2$. Hence $\beta=\alpha$.

If $\tau=-1$, preservation of the distance to $v_\alpha$ and injectivity
of $\Lambda$ give $[q-1]=[q+1]$. Either $q-1=q+1$, implying $2=0$, or
$q-1=-(q+1)$, implying $2q=0$. Both are impossible. Thus $\tau=1$.
Undoing the possible inversion proves the assertion.
\end{proof}

\begin{proposition}\label{prop:basis-isometries}
The full isometry group of \eqref{eq:basis-metric} is $\Affpm(V)$.
\end{proposition}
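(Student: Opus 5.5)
We prove the two inclusions separately. The inclusion $\Affpm(V)\le\Isom(V,d)$ is immediate. Translations preserve $d(x,y)=L(x-y)$ by construction. Inversion $x\mapsto -x$ preserves $d$ because $L$ is even (Lemma~\ref{lem:basis-metric}). Compositions of isometries are isometries, so every map $x\mapsto c\pm x$ is an isometry. Equivalently, $\Affpm(V)=\Xi(V)$ by Lemma~\ref{lem:affine}, and elements of $\Xi(V)$ preserve every translation-invariant metric by Lemma~\ref{lem:unavoidable}.

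For the reverse inclusion, let $F$ be an isometry. We mirror the proof of Theorem~\ref{thm:coordinate}.

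\emph{Normalization.} For each $y\in V$ put $F_y(z)=F(y+z)-F(y)$. This is an isometry fixing zero, by translation invariance. Applying Lemmas~\ref{lem:signed-basis} and~\ref{lem:scalar} to $F_y$ gives a sign $\sigma(y)$ with
\[
 F(y+qv_\alpha)=F(y)+\sigma(y)qv_\alpha \qquad (q\in k,\ \alpha<\kappa).
\]
In particular every one-coordinate step is preserved up to the common sign $\sigma(y)$.

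\emph{Constancy of the sign.} Apply the displayed identity at $y$ and at $y+v_\alpha$ with step $v_\beta$, for $\beta\ne\alpha$. This gives
\[
 F(y+v_\alpha+v_\beta)=F(y)+\sigma(y)v_\alpha+\sigma(y+v_\alpha)v_\beta .
\]
On the other hand, $F_y$ is an isometry fixing $0$ with $F_y(v_\alpha)=\sigma(y)v_\alpha$ and $F_y(v_\beta)=\sigma(y)v_\beta$. Hence $F_y(v_\alpha+v_\beta)$ lies at the distances $L(v_\beta)$ and $L(v_\alpha)$ from $\sigma(y)v_\alpha$ and $\sigma(y)v_\beta$ respectively. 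Comparing with the displayed value, the difference from $\sigma(y)v_\beta$ is $\sigma(y)v_\alpha+(\sigma(y+v_\alpha)-\sigma(y))v_\beta$. This vector must be a one-coordinate vector of the correct radial length. If the signs differed, it would have two-coordinate support, namely $\sigma(y)v_\alpha\pm2v_\beta$ with $2\ne0$, and its length would be $2$ or a $\delta$-value, not a value in $(1,4/3)$. So $\sigma(y+v_\alpha)=\sigma(y)$.

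If there is only one basis vector, use Lemma~\ref{lem:scalar} at $y$ and at $y+v_\alpha$ instead. Then $F(y+2v_\alpha)$ is computed in two ways, giving $(\sigma(y)-\sigma(y+v_\alpha))v_\alpha=\pm\sigma(y)v_\alpha$ type relations that force equality since $2\ne0$. Over $\Q$ the same argument applies with scalars $q$: steps $qv_\alpha$ keep the sign. Since such steps generate $V$, $\sigma$ is constant.

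\emph{Telescoping.} With a constant sign, $F(y+qv_\alpha)=F(y)+\sigma qv_\alpha$ for all $y$, $q$, $\alpha$. Writing any $x$ as a finite sum of scalar multiples of basis vectors and telescoping gives $F(x)=F(0)+\sigma x$, so $F\in\Affpm(V)$.

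The main obstacle is the sign-constancy step. It uses only distances to points whose images are already pinned by the lemmas, so the key check is that two-coordinate vectors never have radial length in $(1,4/3)$. This holds because such vectors have lengths in $\{\delta_{i,j}\}\cup\{2\}$.
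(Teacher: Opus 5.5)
Your proof is correct, but your sign-constancy step takes a longer route than the paper's. The paper uses the fact that Lemma~\ref{lem:scalar} gives $F(x+s)=F(x)+\sigma(x)s$ for \emph{every} nonzero scalar multiple $s$ of a basis vector, including $-s$. Applying the identity at $x+s$ with the reverse step $-s$ returns to $x$ and yields $(\sigma(x)-\sigma(x+s))s=0$. Hence $\sigma(x+s)=\sigma(x)$, because $2s\neq0$. This argument is purely algebraic, uniform in the dimension and in $k$, and needs no second basis vector.

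Your main argument instead compares the distance from $F_y(v_\alpha+v_\beta)$ to $F_y(v_\beta)$. It relies on the metric fact that two-coordinate vectors never have length in $(1,4/3)$. That is valid; as you note, over $\F_3$ the vector $\sigma(y)v_\alpha\pm2v_\beta$ has a $\delta$-length, and otherwise it has length $2$. But it forces a separate case when $\kappa=1$ and a separate remark for rational steps over $\Q$.

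Your fallback for $\kappa=1$ computes $F(y+2v_\alpha)$ once via the single step $2v_\alpha$ and once via two steps $v_\alpha$. That is already the paper's kind of algebraic cancellation. It works for every $\kappa$ and every step $qv_\alpha$, so you could drop the two-coordinate comparison entirely. One correction there: the relation it produces is $(\sigma(y)-\sigma(y+v_\alpha))v_\alpha=0$, not the ``$=\pm\sigma(y)v_\alpha$'' you wrote. With that fixed, $2\neq0$ gives equality of the signs.
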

\begin{proof}
For any isometry $F$ and $x\in V$, the map
$F_x(w)=F(x+w)-F(x)$ fixes zero. Lemma~\ref{lem:scalar} gives a sign
$\sigma(x)$ with
\begin{equation}\label{eq:scalar-steps}
F(x+s)=F(x)+\sigma(x)s
\end{equation}
for every nonzero scalar multiple $s$ of a basis vector. Applying this
identity at $x+s$ with step $-s$ gives
$(\sigma(x)-\sigma(x+s))s=0$. Since $s\ne0$ and the characteristic is not
two, the signs agree. Every vector is a finite sum of such steps, so
$\sigma(x)$ is constant. Telescoping \eqref{eq:scalar-steps} yields
$F(x)=F(0)+\sigma x$. Conversely, translations and translated inversions
preserve the metric.
\end{proof}

No additivity of an isometry has been assumed. In characteristic zero,
Lemma~\ref{lem:scalar} supplies the rational multiples required for the
last propagation step; a rational basis alone does not generate the
additive group by integer combinations.

\subsection{Characteristic two}
For a nonzero $\mathbb F_2$-vector space with well-ordered basis
$\{v_\alpha:\alpha<\kappa\}$, put
$\eta_0=1$ and $\eta_1=9/8$. Define
\[
L_2(0)=0,\qquad L_2(v_\alpha)=\eta_{\eps(\alpha)},
\]
and, for $\alpha<\beta$, set
\[
L_2(v_\alpha+v_\beta)=
\begin{cases}
3/2,&\eps(\beta)=0,\\
7/4,&\eps(\beta)=1.
\end{cases}
\]
Give every remaining nonzero vector length $2$, and set
$d_2(x,y)=L_2(x-y)$. This is the binary-label version of the
vector-space construction in \cite{SutherlandReflection}.

\begin{proposition}\label{prop:boolean}
The function $d_2$ is a translation-invariant metric with at most six
values, and $\Isom(V,d_2)=\T(V)$.
\end{proposition}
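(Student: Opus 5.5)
The plan has two parts: first check that $d_2$ is a metric with the stated values, then mirror the proof of Lemma~\ref{lem:signed-basis}, with the sign bookkeeping removed because $-x=x$.

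\emph{Metric and value count.} Vectors with support of size one, size two, and size at least three are disjoint classes, and a vector of support size two determines its two indices $\alpha<\beta$. Hence $L_2$ is well defined, and it is even because negation is trivial. All nonzero lengths lie in $[1,2]$, so the triangle inequality follows from the $[1,2]$ argument of Lemma~\ref{lem:unavoidable}. The possible values are $0,1,9/8,3/2,7/4,2$, which is at most six. Boundedness, uniform discreteness and completeness follow from the positive lower bound $1$ on nonzero distances. Since $2V=0$, Lemma~\ref{lem:affine} gives $\Affpm(V)=T(V)$, and translations are isometries.

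\emph{Rigidity.} Let $f$ be an isometry fixing $0$.
\begin{enumerate}
\item The basis $\mathcal U=\{v_\alpha\}$ is exactly the set of points at distance $1$ or $9/8$ from $0$. So $f$ preserves $\mathcal U$, and it preserves the vertex label $\eps(\alpha)$, which is read off from $\eta_{\eps(\alpha)}$.
\item For distinct $\alpha<\beta$, the distance $d_2(v_\alpha,v_\beta)=L_2(v_\alpha+v_\beta)$ is $3/2$ or $7/4$. This records the pair label $\eps(\max\{\alpha,\beta\})$.
\item Lemma~\ref{lem:ordinal} then gives $f(v_\alpha)=v_\alpha$ for every $\alpha$.
\end{enumerate}

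For an arbitrary isometry $F$, the map $F_x(w)=F(x+w)-F(x)$ is an isometry fixing $0$. It therefore fixes every $v_\alpha$, which gives $F(x+v_\alpha)=F(x)+v_\alpha$ for all $x$ and $\alpha$. Every vector is a finite sum of basis vectors, so telescoping yields $F(x)=F(0)+x$. Hence $\Isom(V,d_2)=T(V)$.

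The only step needing care is the first one, that the basis is recognizable. It holds because the lengths $1$ and $9/8$ are used only on singleton supports. No step is a real obstacle; the argument is the coordinate recovery of Lemma~\ref{lem:signed-basis} with $\sigma$ trivially equal to $1$.
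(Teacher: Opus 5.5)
Your proposal is correct and follows essentially the same route as the paper. You separate the cases by support size and use the $[1,2]$ triangle argument for the metric, recognize the basis by the two radial values $1$ and $9/8$, and read $\eps(\max\{\alpha,\beta\})$ from the pair distances; then Lemma~\ref{lem:ordinal} fixes each basis vector, and normalizing at each base point and telescoping shows every isometry is a translation.
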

\begin{proof}
The cases are disjoint by support size, and $L_2$ is even because every
vector is its own negative. All nonzero distances lie in $[1,2]$, giving
the triangle inequality. An isometry fixing zero preserves the basis and
its binary labels, which are recognized by the two radial levels. The
pair distances give $\eps(\max\{\alpha,\beta\})$. Lemma~\ref{lem:ordinal}
therefore fixes every basis vector, including the case $\kappa=1$.
The count is two radial values, two pair values, the default value, and
zero: six values in total.

For any isometry $F$, normalizing at $x$ gives
$F(x+v_\alpha)=F(x)+v_\alpha$. Every vector is a finite sum of basis
vectors, so $F(x)=F(0)+x$. Every translation preserves the metric.
\end{proof}

\begin{proof}[Proof of Theorem~\ref{thm:basis}]
Combine Lemma~\ref{lem:basis-metric} and
Proposition~\ref{prop:basis-isometries} with
Proposition~\ref{prop:boolean}. Each metric has all nonzero distances at
least $1$, so it is uniformly discrete and every Cauchy sequence is
eventually constant. Boundedness and the stated bounds follow from the
constructions.
\end{proof}

\begin{corollary}[Signed-basis naturalizing metrics]\label{cor:elementary}
For every odd prime $p$ and every cardinal $\kappa$, the group
$\Cyc{p}^{(\kappa)}$ admits a naturalizing metric with at most
$p+5$ values. The explicit characteristic-two metric above has
at most six values.
\end{corollary}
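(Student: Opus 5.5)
The plan is to combine the signed-basis metric of Theorem~\ref{thm:basis} with the regular-subgroup criterion, Lemma~\ref{lem:regular}. This is the argument of Proposition~\ref{prop:positive}, applied to a different metric. Put $V=\Cyc{p}^{(\kappa)}$ and regard it as an $\F_p$-vector space of dimension $\kappa$. If $\kappa=0$, the zero metric works.

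\textbf{Odd $p$.} Theorem~\ref{thm:basis} provides a translation-invariant metric $d$ with $\Isom(V,d)=\Affpm(V)$ and at most $p+5$ values. By Lemma~\ref{lem:regular}, it suffices to show that every regular subgroup $K$ of $\Affpm(V)$ is isomorphic to $V$.

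\emph{Step 1: reflections have fixed points.} Every element of $\Affpm(V)$ outside $T(V)$ is a reflection $\refl_c(x)=c-x$. Since $2$ is invertible in $\F_p$, the point $x=c/2$ is fixed by $\refl_c$. A regular subgroup contains no nonidentity element with a fixed point, so $K$ contains no reflection.

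\emph{Step 2: conclude.} By Step 1, $K\le T(V)$. Transitivity of $K$ forces $K=T(V)$, and $T(V)\cong V$. Hence $d$ is naturalizing. Here $2V=V$, which is exactly the hypothesis used in Proposition~\ref{prop:positive}.

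\textbf{Characteristic two.} Proposition~\ref{prop:boolean} gives $\Isom(V,d_2)=T(V)=R(V)$ with at most six values. Lemma~\ref{lem:regular} then shows directly that $d_2$ is naturalizing.

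\textbf{Where care is needed.} There is no real obstacle here. The one point to check is that the fixed-point argument uses only the existence of $c/2$, and that this holds for every $c\in V$ when $p$ is odd. One should also note that the bound $p+5$ in Lemma~\ref{lem:basis-metric} does not depend on $\kappa$, which gives the uniformity over all cardinals.
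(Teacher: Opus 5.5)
Your proposal is correct and follows the paper's proof. For odd $p$ you combine $\Isom(V,d)=\Affpm(V)$ from Theorem~\ref{thm:basis} with the fixed point $c/2$ of each reflection and the regular-subgroup argument of Proposition~\ref{prop:positive}, and for $p=2$ you use $\Isom(V,d_2)=\T(V)$ with Lemma~\ref{lem:regular}; the paper also notes that naturality itself already follows from Corollary~\ref{cor:fields}, but its argument for these particular metrics is the same as yours.
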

\begin{proof}
Naturality follows from Corollary~\ref{cor:fields}. For the
particular metrics of Theorem~\ref{thm:basis}, the isometry group is
$\Affpm(V)$. For odd $p$, every nontranslation map $x\mapsto c-x$
fixes $c/2$; for $p=2$, every isometry is a translation. The
regular-subgroup argument of Proposition~\ref{prop:positive} shows
that these metrics are naturalizing. The convention for the zero-dimensional case was fixed in the introduction.
\end{proof}

\begin{remark}
The signed-basis bounds are upper bounds, not optimality assertions.
For $p=3$ the construction uses at most eight values. Theorem~\ref{thm:finite}
and Proposition~\ref{prop:smallbounds} provide the separate universal,
abelian, and Boolean finite bounds. The comparison in
Section~\ref{sec:consequences} selects the better bound where both apply.
\end{remark}

\section*{Acknowledgment}
The author acknowledges substantial assistance from OpenAI's ChatGPT in
developing the proof strategies and constructions, exploring and refining
arguments, preparing preliminary drafts, and searching the literature. Responsibility for the
mathematical claims, the references, and the final manuscript rests with
the author.

\end{document}